\documentclass[11pt]{amsart}
\usepackage[british]{babel}  
\usepackage[top=2cm]{geometry}               		
\geometry{letterpaper}                   		
\usepackage{mathrsfs}
\usepackage{bbm}					
\usepackage[parfill]{parskip} 
\parindent.2in
\parskip.15in

\usepackage{breqn}
\usepackage{bm}	
\usepackage{amsmath,amssymb,amsthm,eucal,verbatim}
\usepackage{hyperref,amsbsy}
\usepackage{graphicx}
\usepackage{epstopdf}

\usepackage{enumitem}

\usepackage{mathtools}
\DeclareGraphicsRule{.tif}{png}{.png}{`convert #1 `basename #1 .tif`.png}
\typeout{FULLPAGE}
\theoremstyle{plain}
\newtheorem{thm}{Theorem}[section]

\newtheorem{cor}{Corollary}[section]
\newtheorem{lem}{Lemma}[section]
\newtheorem*{thm*}{Theorem}
\newtheorem*{lem*}{Lemma}
\newtheorem*{cor*}{Corollary}

\newtheorem*{rem*}{Remark}
\newtheorem{prop}{Proposition}[section]

\theoremstyle{definition}

\newtheorem*{LGF}{The Landau-Gonek Formula}
\newtheorem*{hyp}{Hypothesis $\mathscr{H}_{\alpha}$}
\newtheorem*{pcc}{Montgomery's Pair Correlation Conjecture}

\newcommand{\be}{\begin{equation}}
\newcommand{\ee}{\end{equation}}
\renewcommand{\a}{\alpha}
\newcommand{\s}{\sigma}
\renewcommand{\b}{\beta}
\renewcommand{\r}{\rho }
\newcommand{\g}{\gamma }

\renewcommand{\t}{\tau}
\renewcommand{\d}{\delta}   

\newcommand{\sgn}{\operatorname{sgn}}

\makeatletter
\newcommand*{\bigs}[1]{{\hbox{$\left#1\vbox to10\p@{}\right.\n@space$}}}
\makeatother

\makeatletter
\newcommand*{\bigss}[1]{{\hbox{$\left#1\vbox to11\p@{}\right.\n@space$}}}
\makeatother

\newcommand{\sdfrac}[2]{\mbox{\small$\displaystyle\frac{#1}{#2}$}}

\newcommand{\sdsqrt}[1]{\mbox{\small$\displaystyle\sqrt{#1}$}}

\title[\resizebox{4.8in}{!}{On The Logarithm of the Riemann zeta-function Near the Nontrivial Zeros}]{On The Logarithm of the Riemann zeta-function \\ Near the Nontrivial Zeros}
\author{Fatma Cicek}
\address{Indian Institute of Technology Gandhinagar, Palaj, Gujarat 382355, India}
\email{fcicek@iitgn.ac.in}

\calclayout

\begin{document}

\maketitle

\begin{abstract}
     Assuming the Riemann hypothesis and Montgomery's Pair Correlation Conjecture, we investigate the distribution of the  sequences  
$(\log|\zeta(\r+z)|)$ and $(\arg\zeta(\r+z)).$
    Here $\r=\frac12+i\g$ runs over the nontrivial zeros of the zeta-function, $0<\g \leq T,$ $T$ is a large real number, and $z=u+iv$ is a nonzero complex number of modulus
    $\ll1/\log T.$ Our approach proceeds via a study of  the integral moments of these sequences.     
    If we let $z$ tend to $0$ and further assume that all the zeros $\r$ are simple, we can replace the pair correlation conjecture with a weaker spacing hypothesis on the zeros and deduce that  the sequence $(\log ( |\zeta^\prime(\r)|/\log T))$ has an approximate Gaussian distribution with mean $0$ and variance $\tfrac12\log\log T.$ This gives
 an alternative proof of an old result of Hejhal and improves it by providing a rate of convergence to the distribution. 
\end{abstract}

\section{Introduction}

The study of the distribution of the logarithm of the Riemann zeta-function was precipitated by the work of Bohr and Jessen~\cite{BJ} in the early 1930s. 
They showed that for a fixed $\frac12 < \s \leq 1$ and  any rectangle $\mathcal{R}$ in the complex plane with sides parallel to the coordinate axes, the quantity 
    \[
    \frac{1}{T}\mu\Big\{T < t \leq 2T: \log\zeta(\s+it) \in \mathcal{R}\Big\}
    \]
converges to a value $\mathbb{F}_\s(\mathcal{R})$ as $T\to \infty,$ where $\mu$ denotes the Lebesgue measure and $\mathbb{F}_\s$ denotes a probability distribution function on $\mathbb{C}.$ This result is one  of the many lovely connections between probability theory and analytic number theory. Notice that this means, for example, that $ \log|\zeta(s)|$
and $ \arg\zeta(s)$ are usually bounded on the line $\Re s =\s$ when $\frac12 < \s \leq 1.$
In contrast to this,  $\log|\zeta(\frac12+it)|$ and  $\arg\zeta(\frac12+it)$ are typically much smaller or much larger, for the work of Selberg~\cite{Selberg1944, S1946Archiv} and Tsang~\cite{Tsang}
shows that for large $T$ and fixed real numbers $a, b$ with $a< b$ we have
     \be\label{CLT real}
    \begin{split}
    \frac 1T\text{meas}\Big\{T <t \leq 2T:\frac{\log|\zeta(\tfrac12+it)|}{\sqrt{\tfrac12\log\log T}}\in [a, b]\Big\}
    =\frac{1}{\sqrt{2\pi}}\int_a^b e^{-x^2/2}\mathop{dx}+O\bigg(\frac{(\log\log\log T)^2}{\sqrt{\log\log T}}\bigg)
    \end{split}
    \ee
     and
    \be\label{CLT im}
    \begin{split}
    \frac 1T\text{meas}\Big\{T <t \leq 2T:\frac{\arg\zeta(\tfrac12+it)}{\sqrt{\tfrac12\log\log T}}\in [a, b]\Big\}
    =\frac{1}{\sqrt{2\pi}}\int_a^b e^{-x^2/2}\mathop{dx}+O\bigg(\frac{\log\log\log T}{\sqrt{\log\log T}}\bigg).
    \end{split}
    \ee
Here, the function $\arg\zeta(\tfrac12+it)$ is defined as follows. 
If $t$ is not the ordinate of a zero, then starting with $\arg\zeta(2)=0,$ 
$\arg\zeta(\tfrac12+it)$ is defined by continuous variation over the line segment from  $2$ to $2+it,$ and then from $2+it$ to $\tfrac12+it.$
If $t$ is the ordinate of a zero, then we define 
\be\label{defn of arg}
\arg\zeta(\tfrac12+it)=\lim_{\epsilon \to 0} \frac{\arg\zeta(\tfrac12+i(t+\epsilon))+\arg\zeta(\tfrac12+i(t-\epsilon))}{2}.
\ee
Notice that the main term on the right-hand side of \eqref{CLT real} and \eqref{CLT im} is the distribution function of a random variable with the standard Gaussian distribution.
Indeed, these results are famously known as Selberg's central limit theorem. 

Selberg later generalized his theorem to functions in the so-called Selberg class
(see \cite{Selberg92}). 
He also gave a number of applications of \eqref{CLT real} and \eqref{CLT im}  to such problems as determining the proportion of $a$-points of linear combinations of functions in the Selberg class in various regions of the critical strip and the proportion of zeros of such combinations on the critical line.

Later in a related work, Hejhal~\cite{H} proved that, if the Riemann hypothesis (RH) is true, then the function $\displaystyle\log(|\zeta'(\tfrac12+it)|/\log t)$ has an approximate Gaussian distribution on the interval $[T, 2T]$ 
with mean $0$ and variance $\frac{1}{2}\log\log T.$
Indeed, this was later shown to hold unconditionally by Selberg in unpublished work \cite{SUnpublished}. In the same paper Hejhal further proved a discrete version of this result. 
To describe his work, we need to introduce some notation and a hypothesis. 

Let  $N(T)$ denote the number of nontrivial zeros $\r=\b+i\g$ of $\zeta(s)$ with $0<\b < 1, 0<\g \leq T.$ By the Riemann-von Mangoldt formula,
    \[
    N(T)=\frac{T}{2\pi}\log\frac{T}{2\pi}-\frac{T}{2\pi}+
    \frac{1}{\pi}\arg\zeta (\tfrac12+it)+\frac78+O\Big(\frac{1}{T}\Big). 
    \]
Following \eqref{defn of arg}, if $T$ is the ordinate of a zero, then  we set 
$N(T)=\lim_{\epsilon \to 0}\frac{N(T+\epsilon)+N(T-\epsilon)}{2}.$

For $\a$ a positive real number consider the following zero-spacing hypothesis (which inherently assumes RH).  
   \begin{hyp}\label{hypothesis}
    We have
        \[
        \limsup_{T\to\infty}\frac{1}{N(T)}\#\Big\{0 < \g \leq T:0\leq \g^+-\g \leq \frac{C}{\log T}\Big\} \ll C^\a
        \]
     uniformly for  $0<C<1.$
     Here $\tfrac12+i\g^+$ is the immediate successor of $\tfrac12+i\g$ with the convention that 
     $\g^+=\g$ if and only if  $ \tfrac12+i\g$ is a multiple zero.
\end{hyp}
Notice that if $C$ is any positive number, then the left-hand side is $\ll \min\{C^\a, 1\} N(T).$

Hejhal~\cite{H} proved that if one assumes RH, Hypothesis $\mathscr H_\a$ for some fixed $\a\in(0,1],$ and that all the zeros of the zeta-function are simple, then  as $T\to \infty,$
  \be\label{Hejhal discrete CLT}
    \begin{split}
    \frac{1}{N(2T)-N(T)}
    \# \bigg\{T < \g \leq 2T:  \frac{\log(|\zeta'(\r)|/\log T )}
    {\sqrt{\tfrac12\log\log T}}   \in [a, b]\bigg\}  
     \sim \frac{1}{\sqrt{2\pi}}\int_a^b e^{-x^2/2}\mathop{dx}.
    \end{split}
    \ee
Here, Hypothesis $\mathscr{H}_\a$ ensures that there are not too many dense clusters of zeros of the zeta-function, which is in turn necessary for controlling some of the error terms arising in the proof of \eqref{Hejhal discrete CLT}. This and similar hypotheses have been used by a number of authors, for example, see~\cite{BH}, \cite{Kirila} and~\cite{L}.    

We also remark that $\mathscr{H}_1,$ which implies  $\mathscr{H}_\a$ for every 
$\a\in(0,1].$ $\mathscr{H}_1$ is believed to be true since it 
is implied by the following well-known conjecture of Montgomery~\cite{Montgomery73}.

   \begin{pcc}\label{pcc}
    Let $\a <\b$ be real numbers and define $\delta_0=1$ if $\a\leq 0 < \beta,$ and $\delta_0=0$ otherwise. Then we have
        \[
        \frac{1}{N(T)} \sum_{\substack{0 < \g, \g' \leq T, \\ \tfrac{2\pi\a}{\log T}\leq \g-\g' \leq \tfrac{2\pi\b}{\log T}}} 1 \sim \int_\a^\b \left(1-\frac{\sin^2(\pi x)}{(\pi x)^2}+\delta_0\right) \mathop{dx}
        \]
   as $T\to\infty.$    
   \end{pcc}

Our goal in this paper is to prove a suitable discrete analogue of Selberg's central limit theorem given in \eqref{CLT real} and \eqref{CLT im}.
We also obtain a more precise version of Hejhal's result in \eqref{Hejhal discrete CLT}. 
 
 
    \begin{thm}\label{distr of Re log zeta}
    Assume the Riemann hypothesis and Montgomery's Pair Correlation Conjecture. 
    Let $z=u+iv$ be a complex number with $0 < u \ll \tfrac{1}{\log T}$ and $v=O\bigs( \tfrac{1}{\log X}\bigs),$ where 
    $\displaystyle X=T^{\frac{1}{16\Psi(T)^6}}$ with $\Psi(T)=\sum_{p\leq T} p^{-1}$ and $T$ is sufficiently large.
    Then
        \begin{align*}
        \frac{1}{N(T)}\#\bigg\{0 < \g \leq T: 
        \frac{\log|\zeta(\r+z)|-M_X(\r, z)}{\sqrt{\tfrac12\log\log T}} \in [a, b]&\bigg\} \\
        =\frac{1}{\sqrt{2\pi}}&\int_a^b e^{-x^2/2}\mathop{dx}
        +O\bigg(\frac{(\log\log\log T)^2}{\sqrt{\log\log T}}\bigg),
        \end{align*}
       where 
       \be\label{mean}
       \begin{split}
       M_X(\r, z)= m(\r+iv)\Big(\log\Big(\sdfrac{eu\log X}{4}\Big)-\sdfrac{u\log X}{4}\Big).
      \end{split}
      \ee
      Here, $m(\r+iv)$ denotes the multiplicity of the zero at $\r+iv$ if it is a zero of $\zeta(s),$ otherwise $m(\r+iv)=0.$
       \end{thm}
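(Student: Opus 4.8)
The plan is to establish the theorem as a \emph{discrete} analogue of Selberg's central limit theorem, mirroring the method of moments used in the continuous case \eqref{CLT real}, but with the sum over $t\in[T,2T]$ replaced by the sum over zeros $0<\gamma\le T$. The heart of the matter is an approximation formula expressing $\log|\zeta(\rho+z)|$, for most zeros $\rho$, in terms of a short Dirichlet polynomial over primes. First I would derive a version of the explicit-formula/Landau--Gonek type identity that represents $\log\zeta(\rho+z)$ near a zero. Since $\rho$ itself is a zero, $\log|\zeta|$ has a logarithmic singularity there; the term $M_X(\rho,z)$ in \eqref{mean} is precisely the deterministic main contribution of that singularity, coming from the zero at $\rho+iv$ (if $v$ places us exactly on a zero) interacting with the real shift $u$ — this is why the correction involves $m(\rho+iv)$ and the combination $\log(eu\log X/4)-u\log X/4$. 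After subtracting $M_X(\rho,z)$, what remains should, for $\gamma$ outside a sparse exceptional set, be well approximated by
\[
\Re\sum_{p\le X}\frac{1}{p^{\rho+z-1/2}}\cdot\frac{1}{\sqrt p}\,(\text{smoothed})
\;=\;-\sum_{n\le X}\frac{\Lambda(n)}{n^{1/2+z}\,n^{i\gamma}\log n}\cdot(\cdots),
\]
a Dirichlet polynomial of length $X=T^{1/(16\Psi(T)^6)}$ evaluated at the points $n^{-i\gamma}$.

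The key steps, in order, would be: (1) Prove the short-polynomial approximation for $\log|\zeta(\rho+z)|-M_X(\rho,z)$ valid up to an $L^2$-small error over the zeros, using RH to bound $\log\zeta$ via a truncated sum over zeros and primes (this is where Hypothesis $\mathscr H_\alpha$, hence PCC, enters: it controls the contribution of zeros $\gamma'$ clustering near $\gamma$ within $\ll 1/\log T$). (2) Compute the integer moments $\frac{1}{N(T)}\sum_{0<\gamma\le T}\big(\text{polynomial}\big)^k$; by the Landau--Gonek formula (stated in the paper as \emph{The Landau-Gonek Formula}), sums $\sum_{\gamma\le T} n^{-i\gamma}$ have main terms of size $\frac{T}{2\pi}\frac{\Lambda(n)}{\sqrt n}$ (for $n\le T$) which, crucially, behaves differently from the continuous case $\int_T^{2T} n^{-it}\,dt$ — here the prime-counting weight from Landau--Gonek conspires with the $1/\sqrt p$ in the Dirichlet polynomial to reproduce \emph{the same} Gaussian variance $\tfrac12\log\log T$ as in \eqref{CLT real}. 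One shows the $2k$-th moment is $(2k-1)!!\,(\tfrac12\log\log T)^k(1+o(1))$ and odd moments are lower order. (3) Convert moment asymptotics into the distributional statement with the stated error term $O((\log\log\log T)^2/\sqrt{\log\log T})$, either by a quantitative method-of-moments (Berry--Esseen-type) argument or by following Tsang's/Selberg's treatment of the error in \eqref{CLT real}, taking the number of moments $k\asymp \log\log T/\log\log\log T$. The imaginary-part (argument) case is handled in parallel, with the slightly better error in \eqref{CLT im}.

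I expect the main obstacle to be Step (1): producing an approximation for $\log|\zeta(\rho+z)|-M_X(\rho,z)$ by a Dirichlet polynomial with an error that is small \emph{in mean square over the zeros} and uniform in the shift $z$ with $|z|\ll 1/\log T$. Near a zero the usual Selberg-type formula $\log\zeta(s)\approx-\sum_{n\le X}\Lambda(n)n^{-s}/\log n$ breaks down because $s=\rho+z$ is within $O(1/\log T)$ of the singularity; one must instead use the Hadamard-product/partial-fraction representation $\frac{\zeta'}{\zeta}(\rho+z)=\sum_{\gamma'}\frac{1}{\rho+z-\rho'}+\cdots$, isolate the term $\rho'=\rho+iv$ (which contributes the singular part absorbed into $M_X$), integrate in the real direction from $\sigma=1/2+$something down to the shifted point, and bound the tail sum over the remaining zeros $\rho'$. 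The contribution of zeros with $|\gamma'-\gamma|\le C/\log T$ is controlled on average by Hypothesis $\mathscr H_\alpha$: it contributes $\ll\sum_{C}C^{-1}\cdot C^{\alpha}N(T)\ll N(T)$ in mean square after a dyadic decomposition over the gap size, which is where the full strength of PCC (giving $\alpha=1$) is used to keep this term from overwhelming the variance $\tfrac12\log\log T$. Once this approximation is in place with a quantitative error, Steps (2) and (3) are structurally parallel to the known continuous-case arguments and to Hejhal's treatment, though bookkeeping the dependence on $u,v$ and verifying that the $z\to0$ limit recovers \eqref{Hejhal discrete CLT} (with $\zeta'(\rho)$ emerging from $\lim_{z\to0}(\zeta(\rho+z)/z)$) requires care.
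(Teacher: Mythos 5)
Your overall skeleton — an approximation lemma that isolates the singular contribution $M_X(\rho,z)$ from the zero at $\rho+iv$ via the partial-fraction formula for $\zeta'/\zeta$, discrete moments of the resulting prime polynomial computed with the Landau--Gonek formula, and a Tsang/Selberg-style quantitative conversion to a distribution statement — is exactly the route the paper takes. But there is a genuine gap in how you propose to handle the zero-clustering error, and it concerns precisely the point for which the theorem needs Montgomery's Pair Correlation Conjecture. You plan to bound the contribution of zeros with $|\gamma'-\gamma|\le C/\log T$ using Hypothesis $\mathscr H_\alpha$, with PCC entering only to supply $\alpha=1$. That is the mechanism for the unshifted case $v=0$ (Theorem \ref{thm: log zeta'}), but it fails here: after isolating the term $\gamma'=\gamma+v$, the error involves $\eta_{\gamma+v}=\min_{\gamma'\neq\gamma+v}|\gamma'-(\gamma+v)|$, i.e.\ the distance from the \emph{shifted} point $\gamma+v$ to the nearest zero ordinate. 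Since $|v|$ is only assumed $O(1/\log X)$ with $\log X=\log T/(16\Psi(T)^6)$, the shift can be of size $\Psi(T)^6/\log T$, far larger than $1/\log T$, so information about consecutive gaps $\gamma^+-\gamma$ (which is all $\mathscr H_\alpha$ provides) says nothing about whether some other zero lies within $e^{-j}/\log X$ of $\gamma+v$. What is actually needed is a count of pairs $(\gamma,\gamma')$ with $\gamma-\gamma'$ in a short window centered at $-v$; PCC is used exactly for this (the density $1-\sin^2(\pi x)/(\pi x)^2$ is merely bounded by $1$, so the count is proportional to the window length), and no choice of $\alpha$ in $\mathscr H_\alpha$ rescues the argument — the paper explicitly remarks that $\mathscr H_\alpha$ does not appear sufficient for this theorem and that a shifted spacing hypothesis would be required instead. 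Your statement that ``the full strength of PCC (giving $\alpha=1$)'' is what keeps this term below the variance therefore misidentifies both where and why PCC is used.

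Two secondary points. First, controlling the approximation error only ``in mean square over the zeros'' is not enough for the stated error term $O((\log\log\log T)^2/\sqrt{\log\log T})$: a Chebyshev bound from the second moment forces an exceptional-set/threshold trade-off that loses a power of $\log\log T$. One needs $2k$-th moments of \emph{all} error terms, including the clustering factor $1+\log^+\!\big(1/(\eta_{\gamma+v}\log X)\big)$, with $k$ growing (the paper takes $k\asymp\log\log\log T$), and bounding those high moments of the clustering factor again requires the shifted pair-correlation count above. Second, a small but real mischaracterization: the Gaussian variance $\tfrac12\log\log T$ does not come from the Landau--Gonek prime term ``conspiring'' with the $1/\sqrt p$ weights; it comes from the diagonal term $N(T)\sum_{p\le X^2}p^{-1}$ in the discrete mean value, while the Landau--Gonek off-diagonal term contributes only to the odd moments, which are of lower order because $\log X\ll\log T/\Psi(T)^6$.
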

       We shall see later in the proof of Proposition \ref{zero spacing eta v} where we use Montgomery's Pair Correlation Conjecture that when $v\neq 0,$ $\r+iv$ is not usually a zero of the zeta-function, and so $M_X(\r, z)=0.$
    

    \begin{thm}\label{distr of Im log zeta}
    Assume the Riemann hypothesis. 
    Let $z=u+iv$ be a complex number with $0 < u \leq \tfrac{1}{\log X}$ and $v=O\bigs(\tfrac{1}{\log X}\bigs),$ where 
    $\displaystyle X=T^{\frac{1}{16\Psi(T)^6}}$ with $\Psi(T)=\sum_{p\leq T} p^{-1}$ and $T$ is sufficiently large.
    Then
        \[
        \frac{1}{N(T)}\#\bigg\{0 < \g \leq T:
         \frac{\arg\zeta(\r+z)}{\sqrt{\tfrac12\log\log T}} \in [a, b]\bigg\} 
        =\frac{1}{\sqrt{2\pi}}\int_a^b e^{-x^2/2}\mathop{dx}
        +O\bigg(\frac{\log{\log\log T}}{\sqrt{\log\log T}}\bigg).
        \]
    \end{thm}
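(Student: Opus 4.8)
The plan is to realize $\arg\zeta(\r+z)$ — averaged over zeros $\r=\tfrac12+i\g$ with $0<\g\le T$ — as a sum of a "short Dirichlet polynomial" part that behaves like a sum of independent random variables, plus an error that is small on average after the integer moments are computed. This parallels the now-standard Selberg/Soundararajan approach to the central limit theorem, but with the continuous average $\tfrac1T\int_T^{2T}dt$ replaced by the discrete average over zeros $\tfrac1{N(T)}\sum_{0<\g\le T}$. First I would use the explicit-formula machinery (a truncated version of $\frac{\zeta'}{\zeta}$, i.e.\ the kind of approximation underlying the Landau--Gonek formula flagged in the preamble) to write, for most $\g$, $\arg\zeta(\tfrac12+i\g+z)$ as $-\Im\sum_{n\le X}\frac{\Lambda(n)}{n^{\,1/2+z+i\g}\log n}$ plus a controlled remainder, valid under RH; the parameter $X=T^{1/(16\Psi(T)^6)}$ is chosen exactly so that this truncation error is $o(\sqrt{\log\log T})$ on average. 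The same step should already have been carried out in the paper for $\log|\zeta(\r+z)|$ in proving Theorem \ref{distr of Re log zeta}, so I would import that approximation verbatim, taking imaginary instead of real parts.

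Next I would compute the integer moments
\[
\frac{1}{N(T)}\sum_{0<\g\le T}\Bigl(\,\Im\!\sum_{n\le X}\frac{\Lambda(n)}{n^{1/2+z}\,n^{i\g}\log n}\Bigr)^{k}
\]
for each fixed $k$, and show they converge, as $T\to\infty$, to the moments $\mu_k$ of a mean-zero Gaussian with variance $\tfrac12\log\log T$ (after the appropriate normalization), with an explicit power-saving error in $k$. Expanding the $k$th power produces sums $\sum_{0<\g\le T} (n_1\cdots n_k)^{-i\g}$ with $n_j\le X$; the key input is an asymptotic evaluation of $\sum_{0<\g\le T} m^{i\g}$ for $1\le m\le X$ — essentially a weighted form of the explicit formula / Landau's formula for $\sum_{0<\g\le T}x^{i\g}$ — which produces a "diagonal" main term when $n_1\cdots n_k$ is (after pairing) a perfect square and an error that is a power of $T$ times a small power of the $n_j$'s; since $X$ is a tiny power of $T$, these off-diagonal contributions are negligible. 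The diagonal terms reproduce, by the usual combinatorics of Gaussian moments together with $\sum_{p\le X}1/p=\log\log X+O(1)=\log\log T+O(\log\log\log T)$, the values $\mu_k\bigl(\tfrac12\log\log T\bigr)^{k/2}$; crucially the factor $n^{-u}$ coming from $\Re z=u\le 1/\log X$ contributes $1+o(1)$ uniformly for $n\le X$, so $z$ does not disturb the leading term — this is why no mean-shift appears here, in contrast with $M_X(\r,z)$ in Theorem \ref{distr of Re log zeta} (for $\arg$ there is no analogue of the $-u\log X/4$ term because the imaginary part of the relevant "$\log$ of a single Euler factor near a zero" integrates to zero). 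One then feeds the computed moments into the standard moment-to-distribution comparison (as in Tsang's refinement of Selberg), which converts an error of size $\exp(-c\sqrt{\log\log T})$-type control on moments into the stated $O(\log\log\log T/\sqrt{\log\log T})$ rate; the slightly worse exponent on $\log\log\log T$ compared with Theorem \ref{distr of Re log zeta} reflects the extra factor of $\arg$ being one order less smooth than $\log|\zeta|$.

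The main obstacle I anticipate is controlling the contribution of the \emph{exceptional} zeros — those $\g$ for which the explicit-formula approximation fails, i.e.\ where $\zeta$ is unusually large or small near $\tfrac12+i\g$, or where two zeros are abnormally close so that $\arg\zeta(\tfrac12+i\g+z)$ (defined by the averaging convention \eqref{defn of arg}) jumps by a large amount. On the continuous side Selberg handles this by a measure estimate; on the discrete side one needs that the set of such $\g$ has density $o(1)$ and moreover contributes $o(\sqrt{\log\log T})$ to each low moment. This is precisely the role one would expect Hypothesis $\mathscr H_\a$ to play — but the theorem as stated assumes only RH, not $\mathscr H_\a$, so the bad-zero set must instead be controlled directly from RH via the second-moment / density estimates for $\arg\zeta$ on the critical line (a discrete fourth-moment bound for $S(t)=\tfrac1\pi\arg\zeta(\tfrac12+it)$ summed over zeros, à la Fujii), which is the technically heaviest part. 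A secondary difficulty is that, because $u>0$ rather than $u=0$, one must check that shifting into the half-plane of holomorphy does not interfere with the continuity/averaging convention for $\arg$; but for $u$ as small as $1/\log X$ the shift is harmless and the estimates above go through with only cosmetic changes. I expect the error term $O(\log\log\log T/\sqrt{\log\log T})$ to be exactly what the method yields, matching the exponent in Selberg's \eqref{CLT im}.
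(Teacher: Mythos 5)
Your proposal is essentially the route the paper intends: the paper omits this proof precisely because it runs parallel to that of Theorem~\ref{distr of Re log zeta}, with Lemma~\ref{arg zeta} furnishing the approximation $\arg\zeta(\r+z)=\Im\CMcal{P}_X(\g+v)+\text{(errors)}$, the discrete moments handled through the Landau--Gonek based Lemmas~\ref{sumanbnlemma} and~\ref{Soundmomentlemma3}, and the passage from moments to a distribution with a rate carried out via the random model, characteristic functions built from moments of growing order $K\asymp\Psi(T)^6$, and Beurling--Selberg functions --- exactly the Tsang machinery you invoke.

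The one substantive correction concerns what you single out as the technically heaviest part. No Fujii-type discrete fourth moment of $S(t)$, nor any other control of exceptionally close zeros, is needed, and this is exactly why the theorem assumes only RH: in the proof of Lemma~\ref{arg zeta} the term $\Im J_3$ is bounded by $E(X,\g+v)/\log X$ with no factor $\log^{+}\big(1/(\eta_{\g+v}\log X)\big)$, because the integral $\int_{1/2}^{\infty}|\g+v-\g'|\big((\s-1/2)^2+(\g+v-\g')^2\big)^{-1}\,d\s$ converges uniformly; abnormally close zeros inflate $\log|\zeta|$ near $\r$ but not $\arg\zeta$ in this approximation, and there is no mean term $M_X(\r,z)$ or multiplicity factor either, which is what permits the error $\log\log\log T$ in place of its square. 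Consequently all the error terms $r_1,r_2,r_3$ and $E(X,\g+v)/\log X$ are controlled in high moments by Lemma~\ref{Soundmomentlemma3} under RH alone, and the comparison of indicator functions is finished by the Chebyshev argument of Section~\ref{proof of thm Re log zeta}; no extra density estimate for $S(t)$ enters. A smaller caution: the off-diagonal prime-power terms produced by Landau's formula in the moment computation are genuine secondary main terms of size roughly $T\log X\,\Psi^{(k-1)/2}$, not remainders that are merely a small power of $T$; they are negligible only because $T\log X\ll N(T)/\Psi(T)^{6}$ for the chosen $X$.
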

    Note that unlike Theorem~\ref{distr of Re log zeta}, Theorem~\ref{distr of Im log zeta} does not require the assumption of Montgomery's Pair Correlation Conjecture. 

    Also, Theorem~\ref{distr of Re log zeta} is uniform in $u.$ Letting $v=0$ and then $u\to 0^+$ in the statement of the theorem, we immediately deduce the following corollary.
    
    
    \begin{cor} \label{cor: log zeta'}
     Assume the Riemann hypothesis and Montgomery's Pair Correlation Conjecture, and assume in addition that all zeros of the zeta-function are simple. Then for sufficiently large $T,$ 
        \[
        \begin{split}
        \frac{1}{N(T)}\#\bigg\{0 < \g \leq T: 
        \frac{\log(|\zeta'(\r)|/\log T)}{\sqrt{\tfrac12\log\log T}}\in [a, b]\bigg\}& \\ 
        =\frac{1}{\sqrt{2\pi}}&\int_a^b e^{-x^2/2}\mathop{dx}
        +O\bigg(\frac{(\log\log\log T)^2}{\sqrt{\log\log T}}\bigg).
        \end{split}
        \]
     \end{cor}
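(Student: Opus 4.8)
The plan is to derive the corollary as a limiting case of Theorem~\ref{distr of Re log zeta}, following the sentence in the text that precedes it. First I would specialize the complex shift by setting $v=0$, so that $z=u$ is a small positive real number with $0<u\ll 1/\log T$. With $v=0$ the quantity $m(\r+iv)=m(\r)$ is simply the multiplicity of $\r$ as a zero of $\zeta$, and the extra hypothesis that all zeros are simple forces $m(\r)=1$ for every $\r$ in the sum. Hence the centering term \eqref{mean} collapses to
\[
M_X(\r,u)=\log\Big(\frac{eu\log X}{4}\Big)-\frac{u\log X}{4},
\]
which is independent of $\r$. Theorem~\ref{distr of Re log zeta} then gives, for each such fixed $u$,
\[
\frac{1}{N(T)}\#\bigg\{0<\g\le T:\frac{\log|\zeta(\tfrac12+i\g+u)|-M_X(\r,u)}{\sqrt{\tfrac12\log\log T}}\in[a,b]\bigg\}
=\frac{1}{\sqrt{2\pi}}\int_a^b e^{-x^2/2}\,dx+O\bigg(\frac{(\log\log\log T)^2}{\sqrt{\log\log T}}\bigg),
\]
with the error term uniform in $u$ on the stated range.

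Next I would let $u\to 0^+$. Since $\r=\tfrac12+i\g$ is a simple zero, near $\r$ we have the Taylor expansion $\zeta(\r+u)=\zeta'(\r)\,u+O(u^2)$, so
\[
\log|\zeta(\r+u)|=\log|\zeta'(\r)|+\log u+O(u),
\]
while at the same time
\[
M_X(\r,u)=\log u+\log\log X+\log\frac{e}{4}-\frac{u\log X}{4}=\log u+\log\log X+O(1)
\]
for $u$ small (with $\log X=(\log T)/(16\Psi(T)^6)$, so $\log\log X=\log\log T+O(\log\log\log T)$). Subtracting, the troublesome $\log u$ cancels and
\[
\log|\zeta(\r+u)|-M_X(\r,u)=\log|\zeta'(\r)|-\log\log X+O(1)=\log\big(|\zeta'(\r)|/\log T\big)+O(\log\log\log T).
\]
Dividing by $\sqrt{\tfrac12\log\log T}$, the discrepancy between $\big(\log|\zeta(\r+u)|-M_X(\r,u)\big)/\sqrt{\tfrac12\log\log T}$ and $\log(|\zeta'(\r)|/\log T)/\sqrt{\tfrac12\log\log T}$ is $O\big(\log\log\log T/\sqrt{\log\log T}\big)$, which is absorbed into the error term already present in Theorem~\ref{distr of Re log zeta} (after the standard trick of replacing $[a,b]$ by the slightly enlarged and shrunk intervals $[a-\eta,b+\eta]$ and $[a+\eta,b-\eta]$ with $\eta\asymp\log\log\log T/\sqrt{\log\log T}$, and using that the Gaussian density is bounded so that the measure of the thin boundary strips is also $O(\eta)$). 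Finally I would take $u\to 0^+$: the count on the left-hand side, which counts $\g\in(0,T]$ according to whether a quantity continuous in $u$ near $u=0$ lies in a fixed interval, converges to the corresponding count for $\log(|\zeta'(\r)|/\log T)/\sqrt{\tfrac12\log\log T}\in[a,b]$, since for each fixed $T$ there are finitely many $\g$ and the error $O(u)$ in the Taylor expansion tends to $0$.

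The main obstacle is not any single deep input — all the hard analysis is encapsulated in Theorem~\ref{distr of Re log zeta} — but rather the bookkeeping needed to make the passage $u\to 0^+$ legitimate \emph{uniformly} in a way compatible with $T\to\infty$. One must check that the $O$-constants in the expansions $\log|\zeta(\r+u)|=\log|\zeta'(\r)|+\log u+O(u)$ are controlled (this uses RH and the simplicity of zeros, e.g.\ via $\zeta''(\r)/\zeta'(\r)\ll\log^2 T$ type bounds, which hold on RH), and that the interchange of "$u\to0^+$" with "the fixed-$T$ counting function" does not introduce boundary-effect losses beyond the claimed $O\big((\log\log\log T)^2/\sqrt{\log\log T}\big)$. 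Since the theorem's error term is already of exactly this size and is uniform in $u$, one can in fact fix $u=u(T)$ to be any sufficiently small function of $T$ (say $u=1/\log^2 T$) rather than taking a genuine limit, which sidesteps the interchange issue entirely; I would present the argument that way to keep it clean.
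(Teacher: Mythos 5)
Your proposal follows essentially the same route as the paper: Corollary \ref{cor: log zeta'} is obtained from Theorem \ref{distr of Re log zeta} by setting $v=0$, using the uniformity in $u$, letting $u\to 0^+$ (the limit of $\log|\zeta(\rho+u)|-M_X(\rho,u)$ being exactly the content of Corollary \ref{Re log zeta`} with $m(\rho)=1$), and then trading the centering $\log\big((e\log X)/4\big)$ for $\log\log T$ at the cost of $O(\log\log\log T)$, absorbed by shifting the interval endpoints by $O(\log\log\log T/\sqrt{\log\log T})$ --- precisely the argument the paper carries out explicitly in Section \ref{proof of thm Re log zeta'} for Theorem \ref{thm: log zeta'}. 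Your main line (genuine limit $u\to 0^+$ at fixed $T$, finitely many $\gamma$, plus the endpoint-perturbation trick) is sound. One caution about the variant you say you would actually present: fixing $u=u(T)=1/\log^2 T$ does not sidestep all difficulties, because the error in $\log|\zeta(\rho+u)|=\log|\zeta'(\rho)|+\log u+O(u)$ is not uniform in $\rho$; the implied constant is governed by the distance to the nearest other zero, and the pointwise bound $\zeta''(\rho)/\zeta'(\rho)\ll\log^2 T$ you invoke is not a consequence of RH (it fails when $\rho$ has a very close neighbour). To make the fixed-$u$ version rigorous you would have to discard the zeros with $\eta_\gamma\le u$, whose proportion is $o(1)$ under the Pair Correlation Conjecture, so the corollary's hypotheses do cover this, but the step needs to be said; otherwise simply take the limit $u\to 0^+$ as the paper does.
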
   
In fact, we can slightly weaken the hypotheses of the corollary.  


    \begin{thm} \label{thm: log zeta'}
    Assume the Riemann hypothesis and Hypothesis $\mathscr H_\a$ for some $\a\in (0,1].$ If all zeros of the zeta-function are simple, then for sufficiently large $T$
        \[ 
        \begin{split} 
        \frac{1}{N(T)}\#\bigg\{0 < \g \leq T: 
        \frac{\log(|\zeta'(\r)|/\log T)}{\sqrt{\tfrac12\log\log T}}\in [a, b]\bigg\}&  \\
        =\frac{1}{\sqrt{2\pi}}&\int_a^b e^{-x^2/2}\mathop{dx}
        +O\bigg(\frac{(\log\log\log T)^2}{\sqrt{\log\log T}}\bigg).
        \end{split}
        \]
     \end{thm}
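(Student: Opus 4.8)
The plan is to observe that the conclusion here is identical to that of Corollary~\ref{cor: log zeta'}; only the hypothesis has been relaxed, with Hypothesis~$\mathscr H_\a$ (for some $\a\in(0,1]$) replacing Montgomery's Pair Correlation Conjecture. So I would revisit the proof of Theorem~\ref{distr of Re log zeta} in the case $v=0,$ locate each appeal to the Pair Correlation Conjecture, and check that for $v=0$ every such use can be replaced by $\mathscr H_\a$ together with the simplicity of the zeros; one then passes to $\zeta'$ by letting $u\to0^+$ exactly as in the deduction of the corollary. The first point is that, since $v=0,$ one never needs Proposition~\ref{zero spacing eta v}, which is the only essential use of the Pair Correlation Conjecture in the proof of Theorem~\ref{distr of Re log zeta} when $v\neq0$; moreover $\r$ is now itself a zero of $\zeta,$ of multiplicity $m(\r)=1$ by hypothesis, so $M_X(\r,u)=\log(eu\log X/4)-u\log X/4.$

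The only remaining place where information about the vertical spacing of the zeros is needed is in bounding, on average over $0<\g\le T,$ the error in approximating $\log|\zeta(\r+u)|$ by a short prime polynomial via an explicit formula --- specifically the contribution of the zeros within $\asymp1/\log X$ of $\r.$ Here simplicity makes the bookkeeping clean: the term $\log(eu\log X/4)$ of $M_X(\r,u)$ is exactly what cancels the $\log(1/u)$ contributed by the zero $\r$ itself, so the residual error $E(\g)$ is independent of $u$ and is dominated by the contribution $\log^+\!\bigl(1/((\g^+-\g)\log X)\bigr)+\log^+\!\bigl(1/((\g-\g^-)\log X)\bigr)$ of the immediate neighbours of $\g.$ Hypothesis~$\mathscr H_\a$ gives $\#\{0<\g\le T:\g^+-\g\le e^{-V}/\log T\}\ll_\a e^{-\a V}N(T),$ so --- using $\log X=\log T/(16\,\Psi(T)^6)$ and $\Psi(T)\sim\log\log T$ --- the set on which $E(\g)$ exceeds a truncation level $V$ has relative measure $\ll_\a\Psi(T)^{6\a}e^{-\a V}$ once $V\gg\log\log\log T.$ Taking $V\asymp_\a\log\log\log T$ large enough makes this exceptional set of relative measure $\ll_\a1/\log\log T,$ which is absorbed by the claimed error term; off it, $\log|\zeta(\r+u)|-M_X(\r,u)$ agrees with the prime-polynomial part up to $O_\a(\log\log\log T),$ which after division by $\sqrt{\tfrac12\log\log T}$ merely shifts the endpoints $a,b$ by $O_\a(\log\log\log T/\sqrt{\log\log T}).$ The distribution of the prime-polynomial part is the Gaussian one, with the required rate, by the portion of the proof of Theorem~\ref{distr of Re log zeta} that rests only on the Riemann hypothesis, the Landau--Gonek formula, and the method of moments. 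Finally, as $u\to0^+,$ Taylor expansion at the simple zero $\r$ together with $\log\log X=\log\log T+O(\log\log\log T)$ gives $\log|\zeta(\r+u)|-M_X(\r,u)\to\log(|\zeta'(\r)|/\log T)+O(\log\log\log T);$ since all the estimates above are uniform in $u,$ the claimed distributional statement follows.

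The hard part will be carrying out the second paragraph in detail: one must check that \emph{every} quantitative use of zero-spacing in the $v=0$ analysis survives with the power $C^\a$ of $\mathscr H_\a$ in place of the $C^1$ furnished by the Pair Correlation Conjecture --- with implied constants allowed to depend on $\a$ --- and that this does not worsen the rate $O((\log\log\log T)^2/\sqrt{\log\log T}).$ What makes this feasible is that in this regime one needs only a power saving in the spacing parameter rather than the precise pair-correlation asymptotics, and that simplicity of the zeros reduces the self-interaction of $\r$ to the single explicit term already subtracted in $M_X$; together these let $\mathscr H_\a$ serve for every $\a\in(0,1].$
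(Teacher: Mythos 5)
Your plan follows the same route as the paper's proof in Section \ref{proof of thm Re log zeta'}: take the $v=0$, $u\to0^+$ form of the approximation in Lemma \ref{Re log zeta}, observe that the only place zero-spacing information enters is the term $\bigl(1+\log^{+}(1/(\eta_\g\log X))\bigr)E(X,\g)/\log X$, control that term under Hypothesis $\mathscr H_\a$ alone (using, as you do, that the bound $\min\{C^\a,1\}$ is available for all $C>0$, which costs only the harmless factor $\Psi(T)^{6\a}$ coming from the scale $1/\log X$ versus $1/\log T$), invoke the RH-only Gaussian law for $\Re\CMcal{P}_X(\g)$ from Proposition \ref{distr of chi Re P v}, and finally trade $(e\log X)/4$ for $\log T$ by shifting the endpoints by $O(\log\log\log T/\sqrt{\log\log T})$. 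The genuine difference is technical: where you truncate the spacing factor at level $V\asymp_\a\log\log\log T$ and discard an exceptional set of relative measure $\ll_\a(\log\log T)^{-1}$, the paper instead proves a $2k$-th moment bound for the whole approximation error under $\mathscr H_\a$ (Proposition \ref{Re log zeta' error}, the analogue of Proposition \ref{zero spacing eta v}) and then reruns the Chebyshev argument of Section \ref{proof of thm Re log zeta} with $k=\lfloor\log\Psi(T)\rfloor$; both deliver the stated rate.

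One step of your sketch is not literally correct as written: off your spacing-exceptional set the residual error is \emph{not} pointwise $O_\a(\log\log\log T)$. The term $r_4$ carries the factor $E(X,\g)/\log X$, and the remaining errors $r_1,r_2,r_3$ are likewise only small on average; for individual $\g$ these can be as large as a power of $\log T$. So after removing the spacing-exceptional set you still need a moment-plus-Chebyshev step to discard a second exceptional set on which these averaged quantities are large --- exactly the machinery of Lemma \ref{Soundmomentlemma3}, Proposition \ref{error term integral} and Proposition \ref{Re log zeta' error}, and the argument of Section \ref{proof of thm Re log zeta}. This is why the paper packages the $\mathscr H_\a$ input as a high-moment estimate rather than a measure estimate alone. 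With that step added (it is what your appeal to ``the method of moments'' implicitly presupposes), your argument goes through; the final passage $u\to0^+$ via uniformity in $u$ is the same as in the deduction of Corollary \ref{cor: log zeta'}, which the paper streamlines by working directly with the $u\to0^+$ limit of the approximation formula.
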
   
Observe that this theorem improves Hejhal's result \eqref{Hejhal discrete CLT} by providing an error term. 

It does not seem that Hypothesis $\mathscr H_\a$ (together with RH)  is sufficient to prove Theorem \ref{distr of Re log zeta}. 
However, though we shall not do so, we feel it is worth remarking that  one can prove Theorem \ref{distr of Re log zeta} under the assumption of the following alternative zero-spacing hypothesis (and RH):

There is an $\a \in (0, 1]$ such that for every real number $\t,$ we have   
        \[
        \limsup_{T\to\infty}\frac{1}{N(T)}\#\Big\{0 < \g \leq T:0\leq \g^+_\t-(\g+\t) \leq \sdfrac{C}{\log T}\Big\} \ll C^\a
        \]
 uniformly for  $0<C<1.$
     Here $\tfrac12+i\g^+_\t$ is the zero that immediately follows $\tfrac12+i(\g+\t).$
     Moreover, one has $\g^{+}_{\t}  = \g+\t$ if and only if  $ \tfrac12+i(\g+\t)$ is a multiple zero.

A word is in order concerning our use of RH and Montgomery's Pair Correlation Conjecture or Hypothesis 
$\mathscr H_\a.$
The proofs of our theorems depend on the calculation of  moments  of the form     \[
    \sum_{0<\g\leq T} A(\r)^j \mkern4.5mu\overline{\mkern-4.5mu B(\r)}{}\,^k
    \]
where  $A(s)$ and $B(s)$ are Dirichlet polynomials and $\rho$ runs over the zeros of the zeta-function. 
A formula of Landau and Gonek~\cite{GonekLandaulemma1, GonekLandaulemma2}  allows one to estimate sums of the type
    \[
    \sum_{0<\g\leq T} A(\r)^j B(1-\r)^k,
    \]
which are of the above type, provided that RH is true.
In addition to RH, we need to assume Montgomery's Pair Correlation Conjecture 
in Theorem \ref{distr of Re log zeta}, or Hypothesis $\mathscr H_\a$
in Theorem~\ref{thm: log zeta'}, in order to control one of the error terms in our moment calculations.

The remainder of this  paper is organized into five sections. 
In Section~\ref{sec:approximate formula for zeta} we use  Dirichlet polynomials over the primes
to approximate the real and imaginary parts of $\log \zeta(\r+z).$
In Section~\ref{sec:lemmas} we present a number of technical lemmas. 
In Section~\ref{moments} we calculate some discrete moments related to the real part of $\log\zeta(\r+z).$
Section~\ref{sec:proof} is where we complete the proof of Theorem~\ref{distr of Re log zeta}.
The proof of  Theorem~\ref{distr of Im log zeta} is very similar and easier, so we do not include it.
Finally, we prove Theorem~\ref{thm: log zeta'} in 
Section~\ref{proof of thm Re log zeta'}. 
 
Throughout the paper, we assume RH
and take $T$ to be a sufficiently large positive real number. 
We suppose that $c, A$ and $D$ always denote positive constants, and they may be different at each occurrence. $c$ denotes an absolute constant, while $A$ and $D$ depend on some parameters. 
The variables $p$ and $q,$ indexed or not, are reserved to denote prime numbers, and the variables $j, k, \ell, m$ and $n$ always denote nonnegative integers.


\section*{Acknowledgements}

    The author gives sincere thanks to her doctoral advisor Steven M. Gonek for introducing the problem in this paper
    and also for providing guidance and support during the process of its study.
    Professor Gonek also read an earlier version of this paper and made many useful suggestions which significantly improved the exposition. 
    
    In the beginning stages of this work, the author was partially supported by the NSF grant DMS-1200582 through her advisor.


\section{Approximate Formulas} \label{sec:approximate formula for zeta}

Our goal in this section is to prove approximate formulas for the real and imaginary parts of $\log\zeta(\r+z),$
where $\r=\tfrac12+i\g$ denotes a typical nontrivial zero of $\zeta(s)$ with multiplicity $m(\r),$ and $z=u+iv$ denotes a complex-valued shift such that $0 < u=\Re z \leq \tfrac{1}{\log X}$ and $v=\Im z= O\Big(\tfrac{1}{\log X}\Big).$
Here, $4\leq X\leq t^2$ for a sufficiently large number $t.$
We also set $s=\s+it.$

The Riemann zeta-function has the two well known expressions  
     \[
    \zeta(s)=\sum_{n=1}^\infty \frac{1}{n^s} =\prod_{p}\Big(1-\frac{1}{p^s}\Big)^{-1}
    \qquad \text{for} \quad \s>1.
    \]
From the Euler product one finds that
    \[
    \log\zeta(s)=\sum_{n=1}^\infty \frac{\Lambda(n)}{n^s\log n} \qquad \text{for} \quad \s>1,
    \]
    where $\Lambda(n)$ is  the  von Mangoldt function. 
This last series is absolutely convergent in the half-plane $\s>1.$
It is not difficult to show that if  we truncate it at $X^2$ and only include the primes, 
the resulting Dirichlet polynomial $\sum_{p\leq X^2} \frac{1}{p^s}$ provides a good approximation to $\log\zeta(s)$ in this region.
In Lemma \ref{Re log zeta} and Lemma \ref{arg zeta}, we show that we may similarly use a Dirichlet polynomial to approximate $\log\zeta(\r+z).$
 
Before stating our lemmas we require some additional notation. Let 
        \be\label{P defn}
        \CMcal{P}_X(\g+v)=\sum_{p\leq X^2}\frac{1}{p^{1/2+i(\g+v)}}.
        \ee
Also let
         \be\label{Lambda_X}
        \begin{split}
        \Lambda_X(n)=\Lambda(n)w_X(n),
        \end{split}
        \ee 
where        
        \be\label{w_X}
        \begin{split}
        w_X(n)=
        \begin{cases}
        1&\quad \text{if} \quad 1\leq n \leq X,\\
         \tfrac{\log{(X^2/n)}}{\log{X}} &\quad \text{if} \quad  X< n \leq X^2.
        \end{cases}
        \end{split}
        \ee
Finally, we set
       \[
       \s_1=\frac12+\frac{4}{\log X},
       \]
and       
       \be\label{eta}
        \eta_{\g+v} =\min_{\substack{\g' \neq \g+v}} |\g'-(\g+v)|,
        \ee
where $\g'$ runs over all ordinates of the nontrivial zeros of the zeta-function.        
Notice, in particular, that $\eta_\g$ is the distance from $\g$ to the  nearest ordinate of a zero other than $\r.$


   \begin{lem} \label{Re log zeta}
    Let $4\leq X\leq T^2,$ 
    and $\displaystyle \s_1=\tfrac12+\tfrac{4}{\log X}.$
    Let $z=u+iv$ denote a complex number with $0 <  u \leq \tfrac{1}{\log X}$ and $\displaystyle  v=O\bigs(\tfrac{1}{\log X}\bigs).$ Then 
        \be\label{eq:Re log zeta}  
        \log|\zeta(\r+z)|  
        \, = M_X(\r, z)+\Re\CMcal{P}_X(\g+v) 
        +O\bigg(\sum_{i=1}^{4}r_i(X, \g +v)\bigg).
        \ee
        Here
        $M_X(\r, z)$ is as defined in \eqref{mean}, and $\CMcal{P}_X(\g+v)$ is as defined in \eqref{P defn}. We also have
        \begin{align*}
         r_1(X, \g+v)=\bigg|\sum_{p\leq X^2}&\frac{1-w_X(p)}{p^{1/2+i(\g+v)}}\bigg| \, ,  \quad
        r_2(X, \g+v)=\bigg|\sum_{p\leq X}\frac{w_X(p^2)}{p^{1+2i(\g+v)}}\bigg|\, , \\ 
         r_3(X, \g+v)=\, &\frac{1}{\log X} \int_{1/2}^\infty X^{\tfrac12-\s}\bigg|\sum_{p\leq X^2}\frac{\Lambda_X(p)\log{(Xp)}}{p^{\s+i(\g+v)}}\bigg|\mathop{d\s}, \\
        \shortintertext{and}\hspace{1.8 cm}
          r_4(X, \g+v)&=\bigg(1+\log^{+}\Big(\sdfrac{1}{\eta_{\g+v}\log X }\Big)\bigg)\sdfrac{E(X,\g+v)}{\log X}\, ,
        \end{align*}  
where
        \[
        \quad E(X, \g+v) =\bigg|\sum_{n\leq X^2} 
        \sdfrac{\Lambda_X(n)}{n^{\s_1+i(\g+v)}}\bigg|+\log(\g+v).
        \]   
\end{lem}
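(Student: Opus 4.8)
The plan is to derive \eqref{eq:Re log zeta} from the classical smoothed explicit formula for $\zeta'/\zeta$, integrated in the $\sigma$-direction, and to discard a term only once it has been put into the form of one of $r_1,\dots,r_4$. First I would record the explicit formula. For $s$ neither a zero nor the pole of $\zeta$, evaluate $\frac{1}{2\pi i}\int_{(c)}\bigl(-\tfrac{\zeta'}{\zeta}(s+w)\bigr)\frac{X^{2w}-X^{w}}{w^{2}\log X}\,dw$ with $c$ large in two ways: expanding $-\zeta'/\zeta(s+w)=\sum_{n}\Lambda(n)n^{-s-w}$ and integrating termwise produces $\sum_{n\le X^{2}}\Lambda_{X}(n)n^{-s}$, since $\frac{1}{2\pi i}\int_{(c)}\frac{Y^{w}}{w^{2}}\,dw=\log^{+}Y$ reproduces exactly the weight $w_{X}$ of \eqref{w_X}; and shifting the contour to $\Re w\to-\infty$, then collecting residues at $w=0$ (a simple pole of residue $1$, giving $-\zeta'/\zeta(s)$), at $w=1-s$, at $w=\rho'-s$ over the nontrivial zeros $\rho'$ of multiplicity $m(\rho')$, and at the trivial zeros $w=-2q-s$, gives
\begin{multline*}
-\frac{\zeta'}{\zeta}(s)=\sum_{n\le X^{2}}\frac{\Lambda_{X}(n)}{n^{s}}+\frac{1}{\log X}\sum_{\rho'}m(\rho')\frac{X^{2(\rho'-s)}-X^{\rho'-s}}{(\rho'-s)^{2}}\\
-\frac{X^{2(1-s)}-X^{1-s}}{(1-s)^{2}\log X}+\frac{1}{\log X}\sum_{q\ge1}\frac{X^{-2(2q+s)}-X^{-(2q+s)}}{(2q+s)^{2}}.
\end{multline*}
Convergence of the zero sum and the vanishing of the shifted integral are routine under RH via $\sum_{\rho'}(1+(\gamma'-t)^{2})^{-1}\ll\log(|t|+2)$.

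Since $\rho+z=(\tfrac12+u)+i(\gamma+v)$ has real part exceeding $\tfrac12$, the horizontal ray to its right meets no zero, so $\log\zeta(\rho+z)=\int_{1/2+u}^{\infty}\bigl(-\tfrac{\zeta'}{\zeta}(\sigma+i(\gamma+v))\bigr)\,d\sigma$; I would substitute the formula above and integrate term by term. The Dirichlet-polynomial term integrates to $\sum_{n\le X^{2}}\Lambda_{X}(n)\,n^{-1/2-u-i(\gamma+v)}/\log n$, whose prime part is $\sum_{p\le X^{2}}w_{X}(p)\,p^{-1/2-u-i(\gamma+v)}$; the decomposition $w_{X}(p)p^{-u}-1=(w_{X}(p)-1)+w_{X}(p)(p^{-u}-1)$ reduces its difference from $\sum_{p\le X^{2}}p^{-1/2-i(\gamma+v)}$ to the summand of $r_{1}(X,\gamma+v)$ (supported on $X<p\le X^{2}$) plus $\sum_{p\le X^{2}}w_{X}(p)(p^{-u}-1)\,p^{-1/2-i(\gamma+v)}$. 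For the last sum, inserting $p^{-u}-1=-\int_{0}^{u}(\log p)p^{-\xi}\,d\xi$ and the identity $p^{-\sigma}/\log(Xp)=\int_{\sigma}^{\infty}X^{\sigma-\sigma'}p^{-\sigma'}\,d\sigma'$ turns it into a double integral which, after interchanging the order of integration and using $X^{u}\le e$, is $\ll r_{3}(X,\gamma+v)$; this step is precisely what manufactures the factor $\log(Xp)$ and the damping $X^{1/2-\sigma}$ appearing in $r_{3}$. Taking real parts yields $\Re\sum_{p\le X^{2}}p^{-1/2-i(\gamma+v)}+O(r_{1}+r_{3})$, which is the main term of \eqref{eq:Re log zeta} in the notation of \eqref{P defn}. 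The prime-square part of the Dirichlet polynomial contributes $O(1)+O(r_{2}(X,\gamma+v))$ (the extra factor $p^{-2u}$ being $\asymp 1$), higher prime powers contribute $O(1)$, and the pole term and trivial-zero sum integrate to $O(1)$; every such $O(1)$ is harmless because $r_{4}(X,\gamma+v)\gg\log(\gamma+v)/\log X\gg1$.

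There remains the zero sum $\frac{1}{\log X}\int_{1/2+u}^{\infty}\sum_{\rho'}m(\rho')\frac{X^{2(\rho'-s)}-X^{\rho'-s}}{(\rho'-s)^{2}}\,d\sigma$ with $s=\sigma+i(\gamma+v)$. Its term $\rho'=\rho+iv$ — present exactly when $\rho+iv$ is a zero of $\zeta$ — is diagonal: there $\rho'-s=\tfrac12-\sigma$ is real, and the substitution $\tau=(\sigma-\tfrac12)\log X$ collapses it to $m(\rho+iv)\int_{u\log X}^{\infty}(e^{-2\tau}-e^{-\tau})\tau^{-2}\,d\tau$; integrating by parts and using the exponential integral shows this equals $m(\rho+iv)\bigl(\log(u\log X)+O(1)\bigr)$ uniformly for $u\log X\in(0,1]$, which matches $M_{X}(\rho,z)$ of \eqref{mean} up to $O(m(\rho+iv))$, and since $E(X,\gamma+v)\gg m(\rho+iv)\log X$ whenever $\rho+iv$ is a zero (keep the $\rho'=\rho+iv$ term in the explicit formula on the line $\Re s=\sigma_{1}=\tfrac12+\tfrac{4}{\log X}$) this discrepancy is $\ll r_{4}(X,\gamma+v)$. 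For the remaining zeros I would split the $\sigma$-integral at $\sigma_{1}$: on $[\sigma_{1},\infty)$ the damping $X^{1/2-\sigma}\le e^{-4}$ together with the density bound makes the contribution $\ll\log(\gamma+v)/\log X$, while on $[\tfrac12+u,\sigma_{1}]$ one has to keep the cancellation in $X^{2(\rho'-s)}-X^{\rho'-s}$ (which removes one factor of $\sigma-\tfrac12$ from the singularity), use that no zero lies within $\eta_{\gamma+v}$ of $\gamma+v$, and bound the number of zeros within $O(1/\log X)$ of $\gamma+v$ by $\ll E(X,\gamma+v)$ — the latter again via the explicit formula at $\sigma_{1}$, where $\Re(-\zeta'/\zeta)$ equals a positive sum over $\tfrac{\sigma_{1}-1/2}{(\sigma_{1}-1/2)^{2}+(\gamma'-\gamma-v)^{2}}$ up to $O(\log(\gamma+v))$; the logarithmic integral $\int_{1/2+u}^{\sigma_{1}}d\sigma/|(\sigma-\tfrac12)+i(\gamma'-\gamma-v)|$ then supplies the factor $1+\log^{+}(1/(\eta_{\gamma+v}\log X))$. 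The net contribution of this part is $\ll r_{4}(X,\gamma+v)$, and assembling all the pieces gives \eqref{eq:Re log zeta}.

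The delicate step — which I would expect to be the main obstacle — is this last one, the control of zeros clustering near $\gamma+v$. A term-by-term estimate of $\sum_{\rho'}m(\rho')\frac{X^{2(\rho'-s)}-X^{\rho'-s}}{(\rho'-s)^{2}}$ is far too lossy when many zeros pile up, so one must simultaneously exploit the built-in cancellation of the kernel and the size of the Dirichlet polynomial on the safe line $\Re s=\sigma_{1}$ — which is exactly what $E(X,\gamma+v)$ packages — to count and weight the nearby zeros, while extracting the $\log^{+}(1/(\eta_{\gamma+v}\log X))$ factor from the nearest of them; keeping all of this, and the diagonal evaluation, uniform for $0<u\le 1/\log X$ and $v=O(1/\log X)$ is the crux. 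Everything else is routine bookkeeping with absolutely convergent sums and integrals.
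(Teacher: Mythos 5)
Your outline is essentially sound and ultimately rests on the same two pillars as the paper's proof --- the bound $\sum_{\gamma'}\frac{\sigma_1-1/2}{(\sigma_1-1/2)^2+(\gamma+v-\gamma')^2}\ll E(X,\gamma+v)$, obtained by comparing the smoothed formula with the partial-fraction expansion on the line $\sigma_1$, and the factor $1+\log^{+}\big(1/(\eta_{\gamma+v}\log X)\big)$ coming from the $\sigma$-integration against the nearest off-diagonal zero --- but it is organized differently. The paper quotes Titchmarsh (14.21.4), which already packages the pole, trivial-zero and nontrivial-zero contributions into the error $O\big(X^{1/2-\sigma}E(X,t)\big)$ valid for $\sigma\ge\sigma_1$, and then treats the strip $[\tfrac12+u,\sigma_1]$ through $J_3=\int\big(\tfrac{\zeta'}{\zeta}(\sigma_1+i(\gamma+v))-\tfrac{\zeta'}{\zeta}(\sigma+i(\gamma+v))\big)\,d\sigma$ using only the partial-fraction formula; there the diagonal terms give $M_X(\rho,z)$ exactly. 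You instead keep the exact smoothed explicit formula (kernel $(X^{2w}-X^w)/(w^2\log X)$) all the way down to $\sigma=\tfrac12+u$, evaluate the diagonal by an exponential integral (getting $M_X(\rho,z)+O(m(\rho+iv))$, which is legitimately absorbed into $r_4$ since $m(\rho+iv)\ll E(X,\gamma+v)/\log X$), and keep the prime sum at $\tfrac12+u$ rather than at $\sigma_1$, converting $p^{-u}-1$ with the same $\log(Xp)$ device that produces $r_3$. This buys a self-contained derivation, at the price of having to control the off-diagonal zero sum of the smoothed kernel near $\sigma=\tfrac12$ directly, which is of the same difficulty as the paper's treatment of $J_3$.

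Two intermediate claims need repair, though both are fixable with tools you already invoke. First, the assertion that on $[\sigma_1,\infty)$ the off-diagonal zero sum contributes $\ll\log(\gamma+v)/\log X$ is not correct: a single zero $\gamma'\neq\gamma+v$ at distance $\asymp 1/\log X$ from $\gamma+v$ already contributes $\asymp 1$ on that range, so the right bound is $\ll E(X,\gamma+v)/\log X\le r_4(X,\gamma+v)$, obtained exactly as in your treatment of the inner strip (each such zero carries weight $\gg\log X$ in the sum majorized by $E$). Second, discarding the various $O(1)$ terms ``because $r_4\gg\log(\gamma+v)/\log X\gg1$'' is unjustified: for $\gamma$ bounded and $\log X$ large one has $\log(\gamma+v)/\log X=o(1)$, so $r_4$ need not be $\gg1$. (The paper's own proof drops the analogous $O(1)$ terms from its prime-sum manipulation without comment, so this is a shared blemish rather than a defect peculiar to your route; in the later applications such contributions are harmless, but as a pointwise statement one should either carry an explicit $O(1)$ or restrict to $\gamma$ not too small relative to $X$.)
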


A similar result holds for $\arg\zeta(\r+z).$


\begin{lem} \label{arg zeta}
    With the same notation as in Lemma~\ref{Re log zeta}, we have
        \be\label{Im part 1}
        \arg\zeta(\r+z) 
        =\Im{\CMcal{P}_X(\g+v)}
        +O\bigg(\sum_{i=1}^{3}r_i(X, \g +v)\bigg)
        +O\bigg(\frac{E(X, \g+v) }{\log X}\bigg). 
        \ee
\end{lem}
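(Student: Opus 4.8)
The plan is to run a completely parallel argument to the one that yields Lemma \ref{Re log zeta}, but working with imaginary parts rather than real parts, and to observe that the argument is actually \emph{simpler} because the bad error term $r_4$ does not appear. The starting point is the same: one expresses $\log\zeta(s)$ near the zero $\r+z$ by integrating a suitable contour-integral representation that relates $\log\zeta$ to the Dirichlet polynomial $\sum_{n\le X^2}\Lambda_X(n)/(n^s\log n)$ (essentially the truncated logarithmic series with the smooth weights $w_X$), with the main contribution coming from the zeros of $\zeta$ through the residue calculus. Writing $\log\zeta(\r+z) = \log|\zeta(\r+z)| + i\arg\zeta(\r+z)$, one takes imaginary parts of exactly the same identity whose real part produced \eqref{eq:Re log zeta}. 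The point is that $\Im\CMcal{P}_X(\g+v)$ arises as the imaginary part of the leading Dirichlet polynomial term (the $p\le X^2$, first-power terms), while the error terms $r_1$, $r_2$, $r_3$ already come packaged as absolute values $|\cdots|$, so they bound the imaginary part just as well as the real part and reappear unchanged.

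The key structural difference is the term $M_X(\r,z)$ and the associated error $r_4$. In the real-part lemma, $M_X(\r,z)$ is the contribution of the zero $\r+iv$ itself: if $\r+iv$ is a zero of multiplicity $m$, then $\log|\zeta|$ has a $+m\log|z-iv\cdot(\text{stuff})|$-type singularity nearby, producing the $\log(eu\log X/4)$ main term, and $r_4$ (with its $\log^+(1/(\eta_{\g+v}\log X))$ factor) controls the effect of \emph{other} nearby zeros on $\log|\zeta|$. For the argument, by contrast, the function $\arg\zeta(\r+z) = \Im\log\zeta(\r+z)$ does not pick up a logarithmic main term from the zero at $\r+iv$: near a zero of multiplicity $m$ one has $\log\zeta(s) = m\log(s-(\r+iv)) + (\text{holomorphic})$, and with $s-(\r+iv) = u>0$ real and positive the quantity $m\log(u)$ is purely real, contributing nothing to the imaginary part. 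Hence no $M_X$ term appears. Similarly the contribution of nearby zeros to the \emph{argument} of $\zeta$ is bounded and does not require the delicate $\eta$-dependent factor; one only needs the cruder bound $O(E(X,\g+v)/\log X)$, which absorbs $\log(\g+v)$ and the size of the Dirichlet polynomial at $\s_1$. This is exactly why the statement has an extra $O(E(X,\g+v)/\log X)$ term in place of $r_4$.

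Concretely, I would proceed as follows. First, recall the explicit-formula-type identity (the same one used to prove Lemma \ref{Re log zeta}) expressing $\log\zeta(\r+z)$ in terms of the weighted prime sum plus contributions from the zeros and from a contour integral of $(X^{2w}-X^w)/w^2$ against $\zeta'/\zeta$; this is where the factor $w_X$ and the integral defining $r_3$ originate. Second, isolate the main term $\sum_{p\le X^2} p^{-(1/2+i(\g+v))} = \CMcal{P}_X(\g+v)$ by separating prime powers (the $p^2$ terms give $r_2$) and removing the smoothing weights (the $1-w_X(p)$ terms give $r_1$). Third, take imaginary parts throughout; the main term becomes $\Im\CMcal{P}_X(\g+v)$, the terms $r_1,r_2,r_3$ survive as before since they are stated as moduli, and the zero-sum contribution — rather than producing $M_X$ and $r_4$ — is bounded crudely by $O(E(X,\g+v)/\log X)$ using the standard estimate $\sum_{|\g'-(\g+v)|\le 1} 1 \ll \log(\g+v)$ for the density of zeros together with a bound on $|\zeta'/\zeta|$ at height $\s_1$ expressed through the Dirichlet polynomial. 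The main obstacle, as in Lemma \ref{Re log zeta}, is the careful bookkeeping of the contour shift and the estimation of the zero-sum near $\r+iv$; but the crucial simplification is that for $\arg\zeta$ one does not need to extract a main term or keep track of the spacing $\eta_{\g+v}$ — the contribution of a (possible) zero at $\r+iv$ to the argument is $O(1)$, comfortably absorbed into $O(E(X,\g+v)/\log X)$ — so the bookkeeping is strictly lighter than in the real-part case. Since the referee-level details mirror those of Lemma \ref{Re log zeta} almost verbatim, I would present the proof by indicating the identity, noting where imaginary parts are taken, and explaining the replacement of $(M_X + O(r_4))$ by $O(E(X,\g+v)/\log X)$, rather than repeating the full computation.
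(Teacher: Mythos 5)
Your proposal is correct and follows essentially the same route as the paper: one takes imaginary parts of the same decomposition $\log\zeta(\r+z)=J_1+J_2+J_3$ used for Lemma~\ref{Re log zeta}, reuses $r_1,r_2,r_3$ (which are already moduli), and observes that in $\Im J_3$ the term from a possible zero at $\r+iv$ vanishes identically (the approach is along the real direction, so $m\log u$ is real), while the remaining zero-sum is bounded via \eqref{Sum<E} by $O(E(X,\g+v)/\log X)$ with no $\eta$-dependence. Your secondary remark that an $O(1)$ contribution could be absorbed into $O(E(X,\g+v)/\log X)$ is unnecessary (and not uniformly valid for small $\g$), but since the contribution is in fact zero, as you note first, the argument stands.
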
 

The statement of Lemma \ref{Re log zeta} is uniform in $u.$
Subtracting $M_X(\r,z)$ from both sides of \eqref{eq:Re log zeta}, letting $v=0$ and then $u$ tend to $0$ from the right, we immediately obtain


\begin{cor} \label{Re log zeta`}
    With the same notation as in Lemma \ref{Re log zeta}, 
        \[
        \log\bigg|\frac{\zeta^{(m(\r))}(\r)}{(m(\r))!}\bigg|-m(\r)\log\Big(\sdfrac{e\log X}{4}\Big)
        =\Re\CMcal{P}(\g)
        +O\bigg(\sum_{i=1}^{4}r_i(X, \g)\bigg).
        \]
\end{cor}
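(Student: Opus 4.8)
The plan is to obtain this corollary as the limiting case $v=0$, $u\to 0^{+}$ of Lemma~\ref{Re log zeta}. Since $\r$ is itself a nontrivial zero, taking $v=0$ forces $m(\r+iv)=m(\r)=:m\geq 1$, and writing $\log(eu\log X/4)=\log(e\log X/4)+\log u$ in \eqref{mean} gives
\[
M_X(\r, z)=m\log\Big(\sdfrac{e\log X}{4}\Big)+m\log u-\sdfrac{mu\log X}{4}.
\]
Thus \eqref{eq:Re log zeta} with $v=0$ reads
\[
\log|\zeta(\r+u)|=m\log\Big(\sdfrac{e\log X}{4}\Big)+m\log u-\sdfrac{mu\log X}{4}+\Re\CMcal{P}_X(\g)+O\bigg(\sum_{i=1}^{4}r_i(X,\g)\bigg).
\]
The structural point that makes the argument go through is that none of $r_1(X,\g),\dots,r_4(X,\g)$ depends on $u$ --- each is assembled only from $X$, the ordinate $\g$, the gap $\eta_{\g}$ (which is positive, since the ordinates of the zeros form a discrete set, so the $\log^{+}$ factor inside $r_4$ is finite), and $E(X,\g)$ --- and that Lemma~\ref{Re log zeta} was established uniformly for $0<u\leq 1/\log X$, so the implied constant above is independent of $u$.

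Next I would expand the left-hand side near the zero. Because $\r$ is a zero of $\zeta$ of exact order $m$, Taylor's theorem gives $\zeta(\r+u)=\frac{\zeta^{(m)}(\r)}{m!}\,u^{m}\bigl(1+O(u)\bigr)$ as $u\to 0^{+}$, with $\zeta^{(m)}(\r)\neq 0$, whence
\[
\log|\zeta(\r+u)|=\log\bigg|\frac{\zeta^{(m)}(\r)}{m!}\bigg|+m\log u+O(u).
\]
Inserting this into the identity from Lemma~\ref{Re log zeta} cancels the terms $m\log u$ on both sides and leaves
\[
\log\bigg|\frac{\zeta^{(m)}(\r)}{m!}\bigg|+O(u)=m\log\Big(\sdfrac{e\log X}{4}\Big)-\sdfrac{mu\log X}{4}+\Re\CMcal{P}_X(\g)+O\bigg(\sum_{i=1}^{4}r_i(X,\g)\bigg),
\]
valid for all $u\in(0,1/\log X]$. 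Letting $u\to 0^{+}$, the quantities $O(u)$ and $-\tfrac{mu\log X}{4}$ vanish while every other term is independent of $u$; rearranging yields precisely the asserted formula.

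The one genuine subtlety --- and essentially the whole content of the proof --- is the legitimacy of the passage to the limit: one must know that the error term of Lemma~\ref{Re log zeta} does not degrade as $u\to 0^{+}$ (for instance through a concealed $\log(1/u)$). That is exactly why it is important that $r_1,\dots,r_4$ are manifestly free of $u$ and that Lemma~\ref{Re log zeta} was proved uniformly in $u$. Granting this, there is no further analytic obstacle and the corollary follows at once.
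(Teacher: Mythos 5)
Your proposal is correct and follows essentially the same route as the paper, which obtains the corollary precisely by invoking the uniformity in $u$ of Lemma~\ref{Re log zeta}, setting $v=0$, subtracting $M_X(\r,z)$, and letting $u\to 0^{+}$. Your Taylor-expansion step $\zeta(\r+u)=\frac{\zeta^{(m(\r))}(\r)}{m(\r)!}u^{m(\r)}\bigl(1+O(u)\bigr)$ merely makes explicit the cancellation of $m(\r)\log u$ that the paper leaves implicit, so there is nothing to add.
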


We start with the proof of Lemma \ref{Re log zeta}. This is more complicated than the proof of Lemma \ref{arg zeta},
where some of the intermediate results below will be reused.


\begin{proof}[Proof of Lemma \ref{Re log zeta}]
    Let $ 4\leq X\leq t^2$ and $t\geq 2.$ We write
        \[
        s_1=\s_1+it \quad \text{and} \quad s=\s+it,
        \]
    where $s$ is not a zero of $\zeta(s).$
    By (14.21.4) in \cite{T}, 
        \be\label{Z'/Z}
        \frac{\zeta^{'}}{\zeta}(s) 
        =  - \sum_{n\leq X^2} \frac{\Lambda_X(n)}{n^s} 
        +O\bigs(X^{\frac12-\s}E(X, t)\bigs) \quad \text{for} \quad  \s\geq \s_1,
        \ee
        where
        \be\label{E} 
        E(X, t) =\Big|\sum_{n\leq X^2} \frac{\Lambda_X(n)}{n^{\s_1+it}}\Big|+\log t. 
        \ee
    We also have for  $s$ not equal to any zero $\r^\prime =\frac12+i\g^\prime$ that
        \be\label{Z'/Z 2}
        \frac{\zeta^{'}}{\zeta}(s) 
        =\sum_{\r^\prime} \Big(\frac{1}{s-\r'} +\frac1{\r'}\Big)   +O(\log t).
        \ee
    Taking real parts of both sides and setting $s=s_1,$ we find that
        \[
        \Re \frac{\zeta^{'}}{\zeta}(s_1)
         =\sum_{\r^\prime} \frac{\s_1-1/2}{(\s_1-1/2)^2 +(t-\g^\prime)^2}
         +O(\log t).
        \]
    Using this and \eqref{Z'/Z} with $s=s_1,$ we see that 
        \be\label{Sum<E}
        \sum_{\r^\prime} \frac{\s_1-1/2}{(\s_1-1/2)^2 +(t-\g^\prime)^2} 
        \ll  E(X, t).
        \ee
        
    Now suppose that $\r=\frac 1 2+i\g$ is a fixed zero with $0<\g \leq T,$ and choose a number $z=u+iv$ with $0 <u \leq  \s_1-\tfrac12$ and $ v=O\bigs(\tfrac{1}{\log X}\bigs).$
   Then we have
        \be\label{logzeta rho+z}
        \begin{split}
        \log\zeta(\r+z)
        &= -\int_{\frac12+u}^\infty \frac{\zeta^{'}}{\zeta}(\s+i(\g+v))\mathop{d\s}  \\
        &=-\int_{\s_1}^\infty \frac{\zeta^{'}}{\zeta}(\s+i(\g+v)) \mathop{d\s}    
        -\Big(\s_1-\sdfrac12-u\Big) \frac{\zeta^{'}}{\zeta}(\s_1+i(\g+v))  \\
        &\hskip.5in+\int_{1/2+u}^{\s_1} \Big(\frac{\zeta^{'}}{\zeta}(\s_1+i(\g+v)) -\frac{\zeta^{'}}{\zeta}(\s+i(\g+v))\Big)\mathop{d\s}  \\[3pt]
        &= J_1 +J_2 + J_3.   
         \end{split}
         \ee
    By \eqref{Z'/Z}  
        \begin{align}\label{J_1}
        J_1
        =&\sum_{n\leq X^2} \frac{\Lambda_X(n)}{n^{\s_1+i(\g+v)} \log n}
        +O\bigg(\frac{E(X, \g+v) }{\log X}\bigg) .
        \end{align}
    Again by \eqref{Z'/Z} with $\s=\s_1,$ 
        \be\label{J_2}
        J_2 \ll \Big(\s_1-\sdfrac12-u\Big)  E(X, \g+v)  \ll \frac{E(X,  \g+v)}{\log X} .
        \ee
     By \eqref{logzeta rho+z}, the last two estimates imply
        \be\label{Re part}
        \log|\zeta(\r+z)| 
        =  \Re\, \sum_{n\leq X^2} \frac{\Lambda_X(n)}{n^{\s_1+i(\g+v)} \log n}
        +\Re\, J_3 
        +O\bigg(\frac{E(X, \g+v)}{\log X}\bigg).
         \ee
    In regards to $\Re  J_3,$ we have by \eqref{Z'/Z 2}
         \begin{align*} 
        \Re\bigg(\frac{\zeta^{'}}{\zeta}(\s_1&+i(\g+v)) -\frac{\zeta^{'}}{\zeta}(\s+i(\g+v)) \bigg)  \\
        =&  \sum_{\g^\prime} \bigg( \frac{\s_1-1/2}{ (\s_1-1/2)^2+ (\g+v-\g^\prime)^2}-\frac{\s-1/2}{(\s-1/2)^2+ (\g+v-\g^\prime)^2}  \bigg) 
         +O(\log \g)
         \end{align*}
         for $\frac12\leq \s \leq \s_1$ and $s\neq \r.$
        We separate out the terms  $\g'$ corresponding to $\g+v,$ if any, from the sum. There are $m(\r+iv)$ of them, so we find that
         \begin{align*}
        &\bigg| \Re \bigg(\frac{\zeta^{'}}{\zeta}(\s_1+ i(\g+v)) -\frac{\zeta^{'}}{\zeta}(\s+ i(\g+v)) \bigg) 
       -m(\r+iv)\bigg(\frac{1}{ \s_1-1/2}-\frac{1}{\s-1/2}\bigg)\bigg| \\
        \leq & \sum_{\g'\neq \g+v}   \frac{ \big|(\s_1-1/2)
        \big((\s-1/2)^2+(\g+v-\g')^2 \big)-(\s-1/2) \big((\s_1-1/2)^2+(\g+v-\g')^2 \big)\big|}
        {\big((\s_1-1/2)^2+(\g+v-\g')^2 \big) \big((\s-1/2)^2+(\g+v-\g')^2 \big)}    \\
        &\quad +O(\log \g) \\
        = & \sum_{\g'\neq \g+v}   \frac{(\s_1-\s)\bigs|-(\s_1-1/2)(\s-1/2)+(\g+v-\g')^2\bigs|}{\big((\s_1-1/2)^2+(\g+v-\g')^2 \big) \big((\s-1/2)^2+(\g+v-\g')^2 \big)}\\
        &\quad +O(\log \g).
         \end{align*} 
        Integrating the first and the last term of the inequalities over $\s \in [\frac12+u, \s_1],$ we deduce by triangle inequality that
        \begin{align}\label{ReJ3}
         &\qquad \bigg|\Re \,J_3
         -m(\r+iv)\bigg(\frac{\s_1-1/2-u}{\s_1-1/2}- \frac12\log \frac{(\s_1-1/2)^2}{u^2}\bigg)\bigg| \notag \\
         \leq 
          &\sum_{\g'\neq \g+v} \int_{1/2+u}^{\s_1}  \frac{(\s_1-\s)(\s_1-1/2)(\s-1/2)}
         {\big((\s-1/2)^2+(\g+v-\g')^2 \big)\big((\s_1-1/2)^2+(\g+v-\g')^2 \big)} \mathop{d\s} \\
         + \sum_{\g'\neq \g+v}  &\frac{1}{(\s_1-1/2)^2+(\g+v-\g')^2}\int_{1/2+u}^{\s_1}
          \frac{(\s_1-\s)(\g+v-\g')^2}{(\s-1/2)^2+(\g+v-\g')^2} \mathop{d\s}
         +O \bigg( \frac{\log{\g}}{\log X} \bigg) \notag .
        \end{align}
    The term being subtracted on the left-hand side is the function $M_X(\r, z)$ in \eqref{mean}, namely,  
        \[
        M_X(\r, z)=m(\r+iv)\Big(\log\Big(\sdfrac{eu\log X}{4}\Big)-\sdfrac{u\log X}{4}\Big).
        \]
      We now study the second sum on the right-hand side of  \eqref{ReJ3}. The integral is at most $(\s_1-1/2)^2.$ Thus the second sum is
        \be\label{second sum J3}
       \ll \sum_{\g'\neq \g+v}    \frac{(\s_1-1/2)^2 }{(\s_1-1/2)^2 +(\g+v-\g')^2} . 
        \ee
    For the first sum, note that 
        \[
        (\s_1-\s)(\s_1-1/2)(\s-1/2)  \ll (\s_1-1/2)^2(\s-1/2) \qquad \text{for} \quad \s\in[1/2+u,\s_1].
        \]    
    Thus, the first sum is
        \be\label{first sum lemma}
        \ll  \sum_{\g'\neq \g+v} 
         \frac{(\s_1-1/2)^2}
         {(\s_1-1/2)^2+(\g+v-\g')^2} \int_{1/2+u}^{\s_1}\frac{\s-1/2}    
            {(\s-1/2)^2+(\g+v-\g')^2}\mathop{d\s}. 
        \ee
       Since $\s_1-\tfrac12= \tfrac{4}{\log X},$ the integral here is
           \[
           \frac12\log{\Big(1+\sdfrac{16-u^2\log^2{X}}{u^2\log^2{X}+(\g+v-\g')^2\log^2{X}}\Big)}.
           \]  
    For $x>0,$ set $\log^+ x =\max\{\log x, 0\}.$ It is easy to check that 
    $\log(1+x)\leq 1+\log^+x,$ and that $\log^+(x/y)\leq \log^+x+\log^+(1/y).$
    Using these inequalities, we see that the above expression is
           \[
           \leq 1+ \log^+(16-u^2\log^2{X})+\log^+\Big(\sdfrac{1}{u^2\log^2{X}+(\g+v-\g')^2\log^2{X}}\Big). 
           \]
    The first two terms are $O(1)$ since $\displaystyle 0<u\leq\tfrac{1}{\log X}.$
    To estimate the third, observe that $\log^+\bigs(\tfrac{1}{x+y}\bigs)\leq \log^+\bigs(\tfrac{1}{y}\bigs)$   for
    $0< x\leq 1$ and $y>0.$ Then by the definition of $\eta_{\g+v}$ in \eqref{eta}, 
    the third term is 
            \[
            \ll \log^+\Big(\sdfrac{1}{(\eta_{\g+v}\log X)^2}\Big) 
            \ll \log^+\Big(\sdfrac{1}{\eta_{\g+v}\log X}\Big). 
            \]    
   Combining this with \eqref{first sum lemma}, we obtain from \eqref{ReJ3} and \eqref{second sum J3} that 
        \[
        \begin{split}
        \big|\Re \,J_3 &-M_X(\r, z)\big|  \\
         &\ll
        \bigg(1+\log^{+}\Big(\frac{1}{\eta_{\g+v}\log X}\Big)\bigg)  \sum_{\g' \neq \g+v}\frac{(\s_1-1/2)^2}{(\s_1-1/2)^2 +(\g+v-\g')^2 }
         +O\bigg( \frac{\log{\g}}{\log X}\bigg) .
        \end{split}
        \]
    Now, by \eqref{Sum<E}   
        \[
        \qquad \sum_{\g' \neq \g+v}   \frac{(\s_1-1/2)^2 }{(\s_1-1/2)^2 +(\g+v-\g')^2 } 
        \ll  \Big(\s_1-\sdfrac12\Big) E(X, \g+v) \ll \frac{E(X, \g+v)}{\log X}, 
        \]
     so 
     \[
     \Re \,J_3
     =M_X(\r, z)
     +\bigg(1+\log^{+}\Big(\frac{1}{\eta_{\g+v}\log X}\Big)\bigg)\frac{E(X, \g+v)}{\log X}.
     \]
     Going back to \eqref{Re part}, we have shown that
        \be \label{Re part 2}
        \begin{split}
        \log&|\zeta(\r+z)|  \\
        &=M_X(\r, z)  
        +\Re\sum_{n\leq X^2}\sdfrac{\Lambda_X(n)}{n^{\s_1+i(\g+v)} \log n} 
        +O\bigg(\Big(1+\log^{+}\Big(\sdfrac{1}{\eta_{\g+v}\log X}\Big)\Big)\sdfrac{E(X, \g+v)}{\log X}\bigg).
        \end{split}
        \ee
    Next, following \cite[p. 35]{S1946Archiv} (or \cite[p. 35]{L})
    we write the sum
        \[
        \Re{\sum_{n\leq X^2}\sdfrac{\Lambda_X(n)}{n^{\s_1+i(\g+v)}\log n}}
        \]
    in a more convenient form. 
    By the notation in \eqref{Lambda_X}, we have $\Lambda_X(p^\ell)=w_X(p^\ell)\log p\, .$Thus, the above is
        \[
        \Re{\sum_{p\leq X^2} \frac{1}{p^{\s_1+i(\g+v)}}}
        +\Re {\sum_{p\leq X^2}\sdfrac{w_X(p)-1}{p^{\s_1+i(\g+v)}}} 
        +\Re {\sum_{p^2\leq X^2}\sdfrac{w_X(p^2)}{2p^{2(\s_1+i(\g+v))}}}
        +\Re {\sum_{\substack{p^\ell \leq X^2,\\ \ell>2}}\sdfrac{w_X(p^\ell)}{\ell p^{\ell(\s_1+i(\g+v))}}}.
        \]
     This in turn equals 
        \begin{align*}
        &\Re {\CMcal{P}_X(\g+v)}
        +O\bigg(\Big|  \sum_{p\leq X^2}\sdfrac{1-w_X(p)}{p^{1/2+i(\g+v)}}\Big|\bigg)
        +O\bigg(\Big| \Re \sum_{p\leq X^2}\Big(\sdfrac{w_X(p)}{p^{\s_1+i(\g+v)}}-\sdfrac{w_X(p)}{p^{1/2+i(\g+v)}}\Big)\Big|\bigg) \\
        &+O\bigg( \Big| \sum_{p\leq X}\sdfrac{w_X(p^2)}{p^{1+2i(\g+v)}}\Big|\bigg)
        +O\bigg(\Big| \sum_{p\leq X}\sdfrac{w_X(p^2)}{p^{2\s_1+2i(\g+v)}}-\sdfrac{w_X(p^2)}{p^{1+2i(\g+v)}}\Big|\bigg)
        +O\Big(\sum_{\substack{p^\ell\leq X^2, \\ \ell>2}}\sdfrac{1}{\ell p^{\ell/2}}\Big).
        \end{align*}
    We leave the first and the third error terms intact. In the second error term we apply the mean value theorem for integrals. This ensures that there exists a number $\s_{\ast}$ between $\frac 1 2$ and $\s_1$ such that this error term is 
        \[
        \int_{1/2}^{\s_1}\bigg( \Re \sum_{p\leq X^2}\sdfrac{w_X(p)\log{p}}{p^{u+i(\g+v)}}\bigg)\mathop{du} 
        =\Big(\s_1-\sdfrac12\Big)\, \Re\sum_{p\leq X^2}\sdfrac{\Lambda_X(p)} {p^{\s_{\ast}+i(\g+v)}}.
        \]
    Using the integral $\displaystyle \int_{\s_{\ast}}^\infty \sdfrac{\mathop{d\s}}{(Xp)^\s}= \sdfrac{1}{(Xp)^{\s_\ast} \log(Xp)},$ we rewrite this, and then estimate
        \[
        \begin{split}
          \Big(\s_1-\sdfrac12\Big)X^{\s_{\ast}-\frac 1 2}
         \int_{\s_{\ast}}^\infty X^{\frac 1 2-\s}&\bigg(\Re \sum_{p\leq X^2}\sdfrac{\Lambda_X(p)\log{(Xp)}}{p^{\s+i(\g+v)}}\bigg)\mathop{d\s} \\
        &\ll  \frac{1}{\log X} \int_{1/2}^\infty X^{\frac 1 2-\s}\bigg|\sum_{p\leq X^2}\sdfrac{\Lambda_X(p)\log{(Xp)}}{p^{\s+i(\g+v)}}\bigg|\mathop{d\s}. \\
        \end{split}
        \]
    Next, since $|w_X(p^2) /p^{2i(\g+v)}| \leq 1$ the fourth error term is 
        \[
        =\bigg|\sum_{p\leq X}\sdfrac{w_X(p^2)}{p^{2i(\g+v)}}\Big(\sdfrac{1}{p}-\sdfrac{1}{p^{2\s_1}}\Big)\bigg| 
         \ll \sum_{p\leq X}\Big(\sdfrac{1}{p}-\sdfrac{1}{p^{2\s_1}} \Big) 
         \ll  \Big(\s_1-\sdfrac12\Big)\sum_{p\leq X}\sdfrac{\log p}{p} \ll 1,
        \]
     where we used Mertens' theorem
      $\sum_{p\leq X}\frac{\log p}{p}=\log X+O(1).$   
    Clearly the fifth error term is just $O(1)$ as well.  
    We combine our estimates to find that 
        \be\label{eq:prime sum for Re}
        \begin{split} 
        \qquad \Re \sum_{n\leq X^2}\sdfrac{\Lambda_X(n)}{n^{\s_1+i(\g+v)}\log n}  
        =\Re {\CMcal{P}_X(\g+v)}
        &+O\bigg(\Big| \sum_{p\leq X^2}\sdfrac{1-w_X(p)}{p^{1/2+i(\g+v)}}\Big|\bigg)
        +O\bigg( \Big| \sum_{p\leq X}\sdfrac{w_X(p^2)}{p^{1+2i(\g+v)}}\Big|\bigg)\\
        \qquad &+O\bigg(\frac{1}{\log X} \int_{1/2}^\infty X^{\tfrac12-\s}\Big|\sum_{p\leq X^2}\sdfrac{\Lambda_X(p)\log{(Xp)}}{p^{\s+i(\g+v)}}\Big|\mathop{d\s}\bigg).
        \end{split}
        \ee
    By \eqref{Re part 2} and the above result, the proof of the lemma is complete. 
\end{proof}
 
As we mentioned earlier, the proof of Lemma \ref{arg zeta} has some similarities to the proof of Lemma \ref{Re log zeta}. We will indicate those as needed. 


\begin{proof}[Proof of Lemma \ref{arg zeta}]
     By \eqref{J_1} and \eqref{J_2}, \eqref{logzeta rho+z} gives
        \be\label{Im part}
        \arg\zeta(\r+z) 
        = \Im\, J_3 + \Im\, \sum_{n\leq X^2} \frac{\Lambda_X(n)}{n^{\s_1+i(\g+v)} \log n}
        +O\bigg(\frac{ E(X, \g+v )}{\log X} \bigg),
        \ee
     where 
     \[
      \Im {J_3} 
      =\Im \int_{1/2+u}^{\s_1} \Big(\frac{\zeta^{'}}{\zeta}(\s_1+i(\g+v)) -\frac{\zeta^{'}}{\zeta}(\s+i(\g+v))\Big)\mathop{d\s} .
     \]   
    An argument similar to the one in ~\cite[pp. 8--9]{Selberg1944} can be applied to $ \Im{J_3}.$    
    By \eqref{Z'/Z 2}, we have for $\frac12\leq \s \leq \s_1$ and $s\neq \r$
        \begin{align*} 
        \Im\bigg(\frac{\zeta^{'}}{\zeta}(\s_1&+ i(\g+v)) -\frac{\zeta^{'}}{\zeta}(\s+i(\g+v)) \bigg)  \\
        &=\sum_{\g^\prime} \bigg( \frac{\g+v-\g^\prime}{ (\s_1-1/2)^2+ (\g+v-\g^\prime)^2}
         -\frac{\g+v-\g^\prime}{(\s-1/2)^2+ (\g+v-\g^\prime)^2}  \bigg) 
         +O(\log \g)\\
         &=\sum_{\g^\prime} \bigg( \frac{(\g+v-\g^\prime)\left((\s-1/2)^2-(\s_1-1/2)^2\right)}
         {\left((\s_1-1/2)^2+ (\g+v-\g^\prime)^2\right)\left((\s-1/2)^2+ (\g+v-\g^\prime)^2\right)}\bigg) 
         +O(\log \g).
         \end{align*}
    By integrating the first and the last terms over $[\frac12+u,\s_1],$ we obtain the estimate
        \[
        \begin{split}
        | \Im{J_3}|\
        &\leq \, \sum_{\g'}\frac{(\s_1-1/2)^2}{ (\s_1-1/2)^2+(\g+v-\g')^2  } \int_{1/2}^\infty \frac{|\g+v-\g'|}{(\s-1/2)^2+(\g+v-\g')^2}{d\sigma}
        +O\bigg(\frac{\log{\g}}{\log X} \bigg) \\
        &\ll \sum_{\g'}\frac{(\s_1-1/2)^2}{(\s_1-1/2)^2+(\g+v-\g')^2} 
        +O\bigg(\frac{\log{\g}}{\log X} \bigg).
        \end{split}
        \]
     The last step followed from the convergence of the integral.   
    Then from \eqref{Sum<E}, we conclude that 
    $\displaystyle |\Im{J_3}| \, \ll \tfrac{E(X, \g+v)}{\log X}$ since
    $\log \g \ll E(X, \g+v).$
    Hence by \eqref{Im part}
        \be\label{Im part 2} 
        \arg\zeta(\r+z) 
        = \Im\, \sum_{n\leq X^2} \frac{\Lambda_X(n)}{n^{\s_1+i(\g+v)}\log n}
        +O\bigg(\sdfrac{E(X, \g+v)}{\log X} \bigg).
        \ee
     The argument we used to prove \eqref{eq:prime sum for Re} 
     similarly shows that
        \[
        \begin{split} 
        \Im \sum_{n\leq X^2}\frac{\Lambda_X(n)}{n^{\s_1+i(\g+v)}\log n} 
        =\Im{\CMcal{P}_X(\g+v)}
        &+O\bigg(\Big|\sum_{p\leq X^2}\sdfrac{1-w_X(p)}{p^{1/2+i(\g+v)}}\Big|\bigg)
        +O\bigg( \Big|\sum_{p\leq X}\sdfrac{w_X(p^2)}{p^{1+2i(\g+v)}}\Big|\bigg)\\
        &+O\bigg( \frac{1}{\log X} \int_{1/2}^\infty X^{\tfrac12-\s}\Big|\sum_{p\leq X^2}\sdfrac{\Lambda_X(p)\log{(Xp)}}{p^{\s+i(\g+v)}}\Big|\mathop{d\s}\bigg).
        \end{split}
        \]
     Substituting this into \eqref{Im part 2} completes the proof. 
\end{proof}



\section{Some Preliminary Lemmas}\label{sec:lemmas}
The fundamental result that we will use throughout this section is the Landau-Gonek formula. 

    \begin{LGF}
    Assume RH and let $x  >1, T\geq 2.$ Then
        \[
        \sum_{0<\g\leq T}x^{i\g} 
        = -\frac{T}{2\pi} \frac{\Lambda(x)}{\sqrt x} +\mathcal{E}(x, T),
        \]
    where
        \be\label{LandauLemmaError}
        \begin{split}
        \mathcal{E}(x, T) \ll \sqrt{x} \log( xT) \log\log (3x)
        + \frac{ \log x}{\sqrt{x}}  \min\bigg(T, \frac{x }{\langle x  \rangle}  \bigg)
        + \frac{\log  T}{\sqrt{x} }  \min \bigg(T, \frac{ 1}{\log x}  \bigg),
        \end{split}
        \ee
    and $\langle x \rangle$ denotes the distance from $x$ to the closest prime power other than $x$ itself. 
    \end{LGF}

\begin{proof} 
    Landau~\cite{Landau} proved a weaker version of this in the sense that
    the error term was not uniform in $x.$
    The above version was later proven by 
    Gonek~\cite{GonekLandaulemma1,GonekLandaulemma2}.
\end{proof}

The rest of the results in this section are obtained from the Landau-Gonek formula. 
The following one will be essential in computing discrete moments of Dirichlet polynomials.

  \begin{lem}\label{sumanbnlemma} 
  Assume RH. 
    Let $(a_n)$ and $(b_n)$ be sequences of complex numbers and suppose that $ M, N\leq T.$ Then
        \be\label{sumanbn}
        \begin{split}
        \sum_{0 < \g \leq T}&\bigg(\sum_{n\leq N} a_n n^{-i(\g+v)}\bigg)\bigg( \mkern4.5mu\overline{\mkern-4.5mu \sum_{m\leq M}b_mm^{-i(\g+v)}}\bigg) \\
        = & \, N(T)\sum_{n\leq \min\{M, N\}}a_n\overline{b_n}  
        -\sdfrac{T}{2\pi}\sum_{m\leq M,  n\leq N}a_n\overline{b_m}\Big(\frac{m}{n}\Big)^{iv}\bigg\{\sdfrac{\Lambda(m/n)}{\sqrt{m/n}}+\sdfrac{\Lambda(n/m)}{\sqrt{n/m}}\bigg\}
        \\
        &+O\bigg(\max\{M, N\}\log^2{T}\Big(\sum_{n\leq N}|a_n|^2+\sum_{m\leq M}|b_m|^2 \Big)\bigg) \\
        +O\bigg(&\max\{M, N\} \log T\log\log T\Big(\sum_{m\leq M}\sdfrac{|b_m|}{\sqrt m}\sum_{m<n\leq N}|a_n|\sqrt n+\sum_{n\leq N}\sdfrac{|a_n|}{\sqrt n}\sum_{n<m\leq M}|b_m|\sqrt m\,\Big)\bigg).
        \end{split}
        \ee
    In particular, 
        \[
        \begin{split}
        \sum_{0 < \g \leq T} \Big| \sum_{n\leq N} &  a_n n^{-i(\g+v)}\Big|^2 \\
        =&\, N(T) \sum_{n\leq N} |a_n|^2 
        -\frac{T}{\pi}\Re\sum_{m, n\leq N}  a_n \overline{a_m}\Big( \frac{m}{n}\Big)^{iv}\frac{\Lambda(m/n) }{\sqrt{m/n}} \\
        &+O\bigg(N\log T\log \log T\sum_{n\leq N}\sum_{n<m\leq N}|a_na_m|
        \sdsqrt{\sdfrac mn}\, \bigg) 
        +O\bigg(N\log^2{T}\sum_{n\leq N}|a_n|^2\bigg).
        \end{split}
        \]
    \end{lem}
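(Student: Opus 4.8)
The plan is to expand the product of Dirichlet polynomials, swap the order of summation so that the sum over zeros is on the inside, and then invoke the Landau--Gonek formula for each resulting exponential sum $\sum_{0<\g\le T}(n/m)^{i(\g+v)}$. Writing $(m/n)^{iv}$ as a constant multiplier pulled out front, we have
\[
\sum_{0<\g\le T}\Big(\sum_{n\le N}a_n n^{-i(\g+v)}\Big)\overline{\Big(\sum_{m\le M}b_m m^{-i(\g+v)}\Big)}
=\sum_{\substack{n\le N\\ m\le M}}a_n\overline{b_m}\Big(\frac{m}{n}\Big)^{iv}\sum_{0<\g\le T}\Big(\frac{m}{n}\Big)^{i\g}.
\]
The diagonal terms $m=n$ contribute $N(T)\sum_{n\le\min\{M,N\}}a_n\overline{b_n}$, which is the first main term. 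For the off-diagonal terms one splits into $m>n$ and $m<n$; in the first case set $x=m/n>1$ and apply Landau--Gonek directly, in the second case set $x=n/m>1$. The $-\tfrac{T}{2\pi}\Lambda(x)/\sqrt x$ main term from Landau--Gonek reassembles into the second main term of the claimed identity (the $\Lambda(m/n)/\sqrt{m/n}+\Lambda(n/m)/\sqrt{n/m}$ symmetrization arises precisely from pooling the $m>n$ and $m<n$ contributions, noting that one of $\Lambda(m/n),\Lambda(n/m)$ vanishes unless $m=n$).

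The remaining work is to bound the accumulated error, i.e.\ $\sum_{n\neq m}|a_n||b_m|\,|\mathcal E(x,T)|$ with $x$ the larger-over-smaller ratio, using the three-term bound \eqref{LandauLemmaError}. I would treat the three pieces of $\mathcal E(x,T)$ in turn. The first piece $\sqrt{x}\log(xT)\log\log(3x)$: since $x=m/n\le M$ (or $N$), we have $\log(xT)\ll\log T$ and $\log\log(3x)\ll\log\log T$, and $\sqrt{x}=\sqrt{m/n}$, giving a contribution $\ll \max\{M,N\}\log T\log\log T\cdot\sum_{n\le N}\tfrac{|a_n|}{\sqrt n}\sum_{n<m\le M}|b_m|\sqrt m$ plus the symmetric term — this is the last displayed error term. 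For the second piece $\tfrac{\log x}{\sqrt x}\min(T,x/\langle x\rangle)$ and the third piece $\tfrac{\log T}{\sqrt x}\min(T,1/\log x)$: here one crucially uses the $\min(T,\cdot)$ truncation, which bounds each by something $\ll T/\sqrt x$ trivially, but more usefully one argues that $\langle x\rangle$ (the distance from $x=m/n$ to the nearest other prime power) is not too small. The standard device is that $x/\langle x\rangle\ll m$ when $x=m/n$ is in lowest terms after clearing denominators, so these pieces contribute $\ll \max\{M,N\}\log^2 T(\sum|a_n|^2+\sum|b_m|^2)$ after an application of Cauchy--Schwarz to convert the bilinear $\sum|a_n||b_m|$ into the quadratic forms — this is the third displayed error term.

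The main obstacle is the careful handling of $\langle x\rangle$ and the associated $\min$-terms: one must show that the sum $\sum_{n,m}|a_n||b_m|\cdot\tfrac{\log x}{\sqrt x}\min(T,x/\langle x\rangle)$ does not blow up, which requires noting that for a rational $x=m/n$ the quantity $x/\langle x\rangle$ is controlled in terms of the denominators (since $\langle x\rangle\ge$ roughly $1/\max\{m,n\}$ away from integer collisions, with the integer case $n\mid m$ handled separately via $\min(T,\cdot)\le T$ but then compensated by sparsity), and then applying Cauchy--Schwarz at the right moment to land in $\sum|a_n|^2+\sum|b_m|^2$ rather than a worse bilinear expression. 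Everything else — the diagonal extraction, the symmetrization of the $\Lambda$ main term, and the first error piece — is bookkeeping. The ``in particular'' statement follows immediately by taking $b_n=a_n$, $M=N$, and using $\overline{z}+z=2\Re z$ together with the observation that the two error sums coincide.
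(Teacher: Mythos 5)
Your overall strategy is the same as the paper's (expand the bilinear form, extract the diagonal, apply the Landau--Gonek formula to each ratio $m/n$), and your handling of the main terms and of the first piece of $\mathcal{E}(x,T)$ is fine. The gap is in your claim that the second piece, $\tfrac{\log x}{\sqrt x}\min\big(T,\tfrac{x}{\langle x\rangle}\big)$ with $x=m/n$, can be pushed into the quadratic error term $\max\{M,N\}\log^2 T\big(\sum|a_n|^2+\sum|b_m|^2\big)$ by Cauchy--Schwarz. Using $\langle m/n\rangle\ge 1/n$ this piece is $\ll \sqrt{mn}\,\log(m/n)\ll\sqrt{mn}\,\log T$ per pair, and the resulting sum $\log T\sum_{n<m}|a_n||b_m|\sqrt{mn}$ cannot be converted into the claimed quadratic bound: Cauchy--Schwarz (or AM--GM) only gives something of size $MN\log T\big(\sum|a_n|^2+\sum|b_m|^2\big)$, which exceeds $\max\{M,N\}\log^2 T\big(\sum|a_n|^2+\sum|b_m|^2\big)$ by roughly a factor $\min\{M,N\}/\log T$ --- fatal precisely in the regime $M,N$ of size a power of $T$ that the paper needs. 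The correct (and easy) fix, which is what the paper does, is to keep this piece in bilinear form: $\sqrt{mn}\,\log(m/n)=n\sqrt{m/n}\,\log(m/n)\le N\sqrt{m/n}\,\log T$, so the second piece is at most $\max\{M,N\}\log T$ times $\tfrac{|a_n|}{\sqrt n}\cdot\sqrt m\,|b_m|$ and is absorbed into the \emph{last} displayed error term; indeed that is where the factor $\max\{M,N\}$ in that term comes from (the first piece alone does not need it, contrary to your attribution).

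A second, smaller issue: the third piece $\tfrac{\log T}{\sqrt x}\min\big(T,\tfrac1{\log x}\big)$ has nothing to do with $\langle x\rangle$, and its difficulty is not the $\min(T,\cdot)$ truncation but the near-diagonal singularity $1/\log(m/n)$ when $m$ is close to $n$. After $|a_n||b_m|\le\tfrac12(|a_n|^2+|b_m|^2)$ one must control sums such as $\sum_{n<m\le 2n}1/\log(m/n)$, which is done via $\log(m/n)\gg (m-n)/n$ and the harmonic bound $\sum_{n<m\le 2n}\tfrac{n}{m-n}\ll n\log N$ (and $\log(m/n)\gg1$ for $m>2n$); this is exactly what produces the $\max\{M,N\}\log^2 T$ quadratic error term. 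Your sketch passes over this step, which is the only genuinely delicate part of that error estimate. The rest of your outline --- diagonal extraction, symmetrization of the $\Lambda$ main term, the first piece of $\mathcal{E}$, and the specialization $b_n=a_n$, $M=N$ for the second assertion --- matches the paper and is correct.
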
  

\begin{proof}
    The left-hand side of \eqref{sumanbn} is equal to
        \be\label{proof of sumanbn}
        \begin{split}
        \sum_{m\leq M} \sum_{n\leq N} a_n \overline{b_m} \sum_{0 < \g \leq T}\Big(\frac m n\Big)^{i(\g+v)} 
        = &\, N(T) \sum_{n\leq \min\{M, N\}} a_n\overline{b_n} \\
        &+\sum_{\substack{n<m\\ m\leq M, n\leq N}}   a_n \overline{b_m} \Big(\frac{m}{n} \Big)^{iv}
        \bigg\{-\sdfrac{T}{2\pi}\sdfrac{\Lambda(m/n)}{\sqrt{m/n}}+\mathcal{E}\Big(\sdfrac{m}{n}, T\Big)\bigg\} \\
        &+\sum_{\substack{m<n\\m\leq M, n\leq N }}   a_n \overline{b_m} \Big(\frac{m}{n}\Big)^{iv} 
        \bigg\{-\sdfrac{T}{2\pi}\sdfrac{\Lambda(n/m)}{\sqrt{n/m}}+\mathcal{E}\Big(\sdfrac{n}{m}, T\Big)\bigg\},
        \end{split}
        \ee
    where we applied the Landau-Gonek formula.
    Then by \eqref{LandauLemmaError}, 
        \be\label{error from epsilon}
        \begin{split}
          \sum_{n\leq N}\sum_{n<m\leq M}  a_n \overline{b_m} \Big(\frac{m}{n} \Big)^{iv}\mathcal{E}\Big(\sdfrac{m}{n}, T\Big) 
        \ll &\sum_{n\leq N}\sum_{n<m\leq M}  |a_n b_m| \bigg\{\sdsqrt{\frac{m}{n}} \log\Big(\sdfrac{mT}{n}\Big) \log\log\Big(\sdfrac{3m}{n}\Big)\bigg\} \\
        &+ \sum_{n\leq N}\sum_{n<m\leq M}  |a_n b_m| \frac{ \log (m/n)}{\sqrt{m/n}}  \min\bigg(T, \frac{(m/n) }{\langle m/n \rangle}\bigg)  \\
        &+ \sum_{n\leq N}\sum_{n<m\leq M}  |a_n b_m| \frac{\log T}{\sqrt{m/n}}  \min \bigg(T, \frac{1}{\log (m/n)} \bigg) .
        \end{split}
        \ee
    Since $M, N\leq T,$ the first term on the right-hand side is clearly
        \be\label{eq:first term}
        \ll \log T \log \log T\sum_{n\leq N}\frac{|a_n|}{\sqrt{n}} \sum_{n<m \leq M}\sqrt m\,|b_m|.
        \ee
    For the second term, note that since $\langle m/n \rangle\geq \frac 1n$
        \[
        \begin{split}
         \frac{ \log (m/n)}{\sqrt{m/n}} \min\bigg(T, \sdfrac{(m/n) }{\langle m/n \rangle}  \bigg)
         \ll  \sqrt{mn}\log M \ll  N\sqrt{m/n}\log T.
        \end{split}
        \]
Thus, the second error term is bounded by $N$ times our bound for the first. For the third term on the right-hand side of \eqref{error from epsilon}, we note that
        \[
         \frac{\log T}{\sqrt{m/n} }  \min \bigg(T, \frac{1}{\log (m/n)}\bigg) 
         \ll   \frac{\log T}{\sqrt{m/n} \log (m/n)}
        \]
    since $\log{(m/n)\gg \frac{1}{T}}$ for $M, N\leq T.$ Then the third term is
        \[
        \begin{split}
        &\ll \log T \sum_{n\leq N} \; \sum_{n<m\leq M}\sqrt{\frac{n}{m}}\, \frac{|a_n|^2+ |b_m|^2}{ \log (m/n)} \\
        &=\log T \sum_{n\leq N} \;  \sum_{n<m\leq M}\sqrt{\frac{n}{m}}\frac{|a_n|^2}{ \log (m/n)}
        +\log T \sum_{n\leq N} \; \sum_{n<m\leq M}\sqrt{\frac{n}{m}}\frac{|b_m|^2}{\log (m/n)} \\[1ex]
        &:=S_1+S_2\, .
        \end{split}
        \]
    For $S_1,$ we separate the sum over $m$ as follows. 
        \[
       \bigg( \, \sum_{n<m \leq \min\{2n, M\}}+ 
        \sum_{2n< m\leq M}\, \bigg)\frac{1}{\sqrt m  \log (m/n)} .
        \]
    If $n<m \leq \min\{2n, M\},$ then $\log(m/n) \gg (m-n)/n.$ For the remaining $m$ which satisfy $2n< m \leq  M,$ we simply have $\log(m/n) \gg 1 .$ Using these bounds, we see that
        \begin{align*}
        S_1 &\ll \log T \sum_{n\leq N} \sqrt n\, |a_n|^2 \,\bigg(\sum_{n< m \leq \min\{2n, M\}}\frac{n}{\sqrt{m}(m-n)}+ \sum_{2n< m \leq  M}\frac{1}{\sqrt m}\bigg) \\
        &\ll \log T \sum_{n\leq N}n\, |a_n|^2  \sum_{n< m \leq \min\{2n, M\}}\frac{1}{m-n} + \sqrt M\sum_{n\leq N}\sqrt n\, |a_n|^2 \\[1.6ex]
        &\ll N \log N \log T \sum_{n\leq N} |a_n|^2 +\sqrt{NM}\log T\sum_{n\leq N} |a_n|^2 
        \ll N\log^2{T} \sum_{n\leq N} |a_n|^2 .
        \end{align*}
    The other term we need to consider is
        \[
        S_2= \log T\sum_{n\leq N} \;   \sum_{n<m\leq M}\sqrt{\frac{n}{m}}\frac{|b_m|^2}{\log(m/n)}.
        \]
    If we change the order of summation and use $\sqrt n\leq \sqrt m,$ then this is at most
        \begin{equation*}
         \log T\sum_{m\leq M} |b_m| ^2  \sum_{\substack{n<m, \\n\leq N}}\sdfrac{1}{\log (m/n)}.
        \end{equation*}
    Further, since $\log(m/n) \geq (m-n)/m$ for $n<m,$ we have
        \begin{equation*}
        S_2 
        \ll \log T\sum_{m\leq M} m|b_m| ^2 \, \sum_{\substack{n<m,\\ n\leq N}} \frac{1}{m-n}
        \ll M\log M\log T\sum_{m\leq M} |b_m| ^2 
        \ll M\log^2{T} \sum_{m\leq M} |b_m| ^2.
         \end{equation*}
    When we combine our estimates for $S_1$ and $S_2$ with \eqref{eq:first term} in \eqref{error from epsilon}, we find
        \begin{align*}
        \sum_{n\leq N} & \sum_{n< m\leq M}a_n \overline{b_m} \Big(\frac{m}{n}\Big)^{iv} \mathcal{E}\Big(\sdfrac{m}{n}, T\Big) \\
        \ll   \, N\log  T \log \log T \sum_{n \leq N}\frac{|a_n|}{\sqrt n} & \sum_{n <m\leq M }\sqrt m\, |b_m|
        \,+\, N\log^2{T}\sum_{n\leq  N} |a_n| ^2 +M\log^2{T}\sum_{m\leq M} |b_m| ^2  .
        \end{align*}
    In a similar way, one shows that
        \begin{align*}
        \sum_{m\leq M}&\sum_{m<n\leq N} a_n \overline{b_m} \Big(\frac{n}{m}\Big)^{iv}\mathcal{E}\Big(\sdfrac{n}{m}, T\Big)  \\
        \ll M \log T \log \log T \sum_{m \leq M}\frac{|b_m|}{\sqrt m} &\sum_{m <n \leq N}\sqrt{n}\, |a_n| 
        \,+\,\max\{M, N\}\log^2{T}\bigg(\sum_{n\leq N} |a_n|^2+\sum_{m \leq  M} |b_m|^2 \bigg).
        \end{align*}
    The lemma now follows from \eqref{proof of sumanbn}. 
\end{proof}
The following is an easy consequence of the previous lemma. 
We state it separately because the following form will be more convenient for our purposes. 

\begin{cor} \label{cor:moments}
Assume RH. 
Let $(a_n)$ and $(b_n)$ be two sequences of complex numbers. Suppose that $ N^j, N^{k-j}\leq T,$ where $j$ and $k$ are nonnegative integers. Then
    \be \notag
    \begin{split} 
    &\sum_{0 < \gamma \leq T}\bigg(\, \sum_{n\leq N}a_nn^{-i(\g+v)}\bigg)^j \bigg(\, \mkern4.5mu\overline{\mkern-4.5mu \sum_{m\leq N}b_mm^{-i(\g+v)}}\,\bigg)^{k-j}  \\
     =&\, N(T)\sum_{n\leq \min\{N^j, N^{k-j}\}}A_n\overline{B_n}
    -\frac{T}{2\pi}\sum_{\substack{n\leq N^j, \\ m\leq N^{k-j}}}A_n\overline{B_m}\Big(\frac mn\Big)^{iv}
    \bigg\{\frac{\Lambda(m/n)}{\sqrt{m/n}}+\frac{\Lambda(n/m)}{\sqrt{n/m}}\bigg\} \\
    &+O\bigg(N^k\log T \log\log T \Big(\sum_{n\leq N^j}\sdfrac{|A_n|}{\sqrt n}\sum_{n<m\leq N^{k-j}}\sqrt m\,|B_m|
    +\sum_{m\leq N^{k-j}}\sdfrac{|B_m|}{\sqrt m}\sum_{m<n\leq N^j}\sqrt n\, |A_n|\Big)\bigg)\\
    &+O\bigg(N^k \log^2{T}\Big(\sum_{n\leq N^j}|A_n|^2\,+\sum_{m\leq N^{k-j}}|B_m|^2 \Big)\bigg),
    \end{split}
    \ee
where 
    \[
    A_n=\sum_{n=n_1\dots n_j} a_{n_1}\dots a_{n_j} \quad \text{and} \quad B_m=\sum_{m=m_1\dots m_{k-j}}b_{m_1}\dots b_{m_{k-j}}.
    \]
\end{cor}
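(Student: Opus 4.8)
The plan is to deduce the corollary from Lemma~\ref{sumanbnlemma} by observing that an integer power of a Dirichlet polynomial is again a Dirichlet polynomial, and then feeding these longer polynomials into the lemma. First I would expand by the multinomial theorem:
\[
\bigg(\sum_{n\leq N}a_n n^{-i(\gamma+v)}\bigg)^{j}
=\sum_{n_1,\dots,n_j\leq N}a_{n_1}\cdots a_{n_j}\,(n_1\cdots n_j)^{-i(\gamma+v)}
=\sum_{n\leq N^{j}}A_n\,n^{-i(\gamma+v)},
\]
where, on the convention $a_\ell=0$ for $\ell>N$ (so that factorizations with some $n_i>N$ contribute nothing), the coefficient is $A_n=\sum_{n=n_1\cdots n_j}a_{n_1}\cdots a_{n_j}$ exactly as in the statement, and $A_n$ is automatically supported on $n\leq N^{j}$. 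In the same way one has $\big(\overline{\sum_{m\leq N}b_m m^{-i(\gamma+v)}}\big)^{k-j}=\sum_{m\leq N^{k-j}}\overline{B_m}\,m^{i(\gamma+v)}$ with $B_m=\sum_{m=m_1\cdots m_{k-j}}b_{m_1}\cdots b_{m_{k-j}}$ supported on $m\leq N^{k-j}$.

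I would then apply Lemma~\ref{sumanbnlemma} verbatim with the substitutions $(a_n)\mapsto(A_n)$, $(b_m)\mapsto(B_m)$, $N\mapsto N^{j}$ and $M\mapsto N^{k-j}$. The lemma's hypothesis $M,N\leq T$ is precisely the assumed $N^{j},N^{k-j}\leq T$, so no further condition is needed. The main term $N(T)\sum_{n\leq\min\{M,N\}}a_n\overline{b_n}$ and the term involving $\Lambda(m/n)$ and $\Lambda(n/m)$ carry over directly to the first two terms on the right-hand side of the corollary, while the two $O$-terms of the lemma become the two $O$-terms of the corollary, with $\max\{N^{j},N^{k-j}\}$ in the role of $\max\{M,N\}$. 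Since $N\geq1$ and (as in all our applications) $0\leq j\leq k$, we have $\max\{N^{j},N^{k-j}\}\leq N^{k}$, which gives the slightly cruder form stated.

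Because every step is a direct substitution, there is essentially no obstacle here; the only points needing (routine) verification are that the multinomial expansion produces precisely the coefficients $A_n$ and $B_m$ named in the statement, that these are supported on $n\leq N^{j}$ and $m\leq N^{k-j}$ respectively, and that replacing $\max\{N^{j},N^{k-j}\}$ by $N^{k}$ is harmless under the standing assumption that $N$ is a positive integer.
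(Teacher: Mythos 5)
Your proposal is correct and is exactly the paper's argument: the paper proves the corollary by applying Lemma~\ref{sumanbnlemma} to the sequences $(A_n)$ and $(B_m)$ obtained by expanding the powers, which is precisely your substitution $N\mapsto N^{j}$, $M\mapsto N^{k-j}$ followed by bounding $\max\{N^{j},N^{k-j}\}$ by $N^{k}$.
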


\begin{proof}
    This is the result of Lemma \ref{sumanbnlemma} applied to the sequences 
    $(A_n)$ and $(B_n).$
\end{proof}

The following lemma can be viewed as the discrete version of Lemma 3 in \cite{Soundararajan09}.


\begin{lem}\label{Soundmomentlemma3}
Assume RH.
    Let $k$ be a positive integer and suppose $\displaystyle 1<Y\leq (T/\log{T})^{\frac{1}{3k}}.$
    For any complex-valued sequence $(a_p)_p$ indexed by the primes, we have
        \[
        \sum_{0 < \g \leq T}\bigg| \sum_{p\leq Y}\frac{a_p}{p^{1/2+i\g}}\bigg|^{2k}
        \ll k! N(T)\Big(\sum_{p\leq Y}\sdfrac{|a_p|^2}{p}\Big)^{k}.
        \]
\end{lem}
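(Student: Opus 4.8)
The plan is to expand the $2k$-th power, collect terms according to the integer they multiply to, and then apply Corollary~\ref{cor:moments} with $N = Y$, $k$ replaced by $2k$, and $j = k$. Write $\CMcal{D}(\g) = \sum_{p\le Y} a_p p^{-1/2-i\g}$. Then $|\CMcal{D}(\g)|^{2k} = \CMcal{D}(\g)^k\,\overline{\CMcal{D}(\g)}^{k}$, which is exactly the shape handled by the corollary (here $v=0$, the sequences are supported on primes $\le Y$, and $a_n$ is $a_p/\sqrt p$ for $n=p\le Y$ prime and $0$ otherwise; similarly for $b_n$ with $b_p = a_p/\sqrt p$). The coefficient sequences become $A_n = B_n = \sum_{n = p_1\cdots p_k} \frac{a_{p_1}\cdots a_{p_k}}{\sqrt{p_1\cdots p_k}}$, a sum over ordered factorisations of $n$ into $k$ primes; in particular $A_n$ is supported on $Y$-smooth integers $n \le Y^k$ with exactly $k$ prime factors counted with multiplicity. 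The hypothesis $Y \le (T/\log T)^{1/(3k)}$ ensures $Y^k \le T$ (indeed $Y^{3k}\le T$), so the corollary applies.

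The three resulting pieces must be estimated. The diagonal term $N(T)\sum_{n\le Y^k} |A_n|^2$ is the main term: I would bound $\sum_n |A_n|^2 \le k!\,\big(\sum_{p\le Y} |a_p|^2/p\big)^k$ by the standard multinomial argument — $A_n$ is a sum of at most $k!$ ordered factorisations, Cauchy--Schwarz (or the AM-GM / multinomial expansion of $(\sum_p |a_p|^2/p)^k$) gives $\sum_n |A_n|^2 \le k! \sum_{p_1,\dots,p_k} \frac{|a_{p_1}|^2\cdots|a_{p_k}|^2}{p_1\cdots p_k} = k!\big(\sum_p |a_p|^2/p\big)^k$. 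This yields the claimed main term $\ll k!\,N(T)\big(\sum_{p\le Y}|a_p|^2/p\big)^k$. The off-diagonal $\frac{T}{2\pi}\sum A_n\overline{A_m}\{\cdots\}$ term is nonzero only when $m/n$ or $n/m$ is a prime power; since $n,m$ are products of $k$ primes each, $m/n = q^\ell$ forces most prime factors to cancel, and one checks this contribution is $\ll T\,(\log Y)\big(\sum_p |a_p|^2/p\big)^k \cdot (\text{something small})$, in any case absorbed into the main term because $T \asymp N(T)/\log T$ and the extra factors are polynomial in $\log Y \ll \log T$. (More carefully: bound $\Lambda(m/n)/\sqrt{m/n} \le \log Y$ and sum $|A_n A_m|$ over pairs with $n\mid m$, $m/n$ a prime power; the $\sqrt{m/n}$ in the denominator controls this.)

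The error terms from the corollary are the main obstacle. Both $O\big(Y^{2k}\log^2 T \sum_n |A_n|^2\big)$ and the $\log T\log\log T$ double-sum error must be shown to be $\ll N(T)\big(\sum_p|a_p|^2/p\big)^k$, i.e.\ absorbed. Since $\sum_n|A_n|^2 \le k!\big(\sum_p|a_p|^2/p\big)^k$, the first error term is $\ll k!\,Y^{2k}\log^2 T \big(\sum_p|a_p|^2/p\big)^k$, and we need $Y^{2k}\log^2 T \ll N(T) \asymp T\log T$; this is exactly where the constraint $Y \le (T/\log T)^{1/(3k)}$ is used, giving $Y^{2k} \le (T/\log T)^{2/3}$ so $Y^{2k}\log^2 T = o(T/\log T)\cdot(\log T)^{O(1)} \ll T\log T$. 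For the $\log T\log\log T$ double-sum error I would use Cauchy--Schwarz on $\sum_{n\le Y^k}\frac{|A_n|}{\sqrt n}\sum_{n<m\le Y^k}\sqrt m\,|A_m|$: crudely $\sum_n \frac{|A_n|}{\sqrt n}\sum_m \sqrt m |A_m| \le \big(\sum_n |A_n|^2\big)^{1/2}\big(\sum_n 1/n\big)^{1/2}\big(\sum_m |A_m|^2\big)^{1/2}\big(\sum_m m\big)^{1/2} \ll Y^{k}\,(\log T)^{1/2}\sum_n|A_n|^2$ (using $\sum_{m\le Y^k} m \ll Y^{2k}$), so this error is $\ll Y^{2k}(\log T)^{5/2}\log\log T \cdot k!\big(\sum_p|a_p|^2/p\big)^k$, again $\ll N(T)(\cdots)^k$ by the same smallness of $Y^{2k}$. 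Collecting the main term and the absorbed errors completes the proof.
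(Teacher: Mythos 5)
Your proposal is correct and follows essentially the same route as the paper: expand the $k$-th power multinomially, apply the Landau--Gonek mean-value result (the paper applies the second assertion of Lemma \ref{sumanbnlemma} to the length-$Y^k$ polynomial, you apply Corollary \ref{cor:moments} with $N=Y$ and total degree $2k$, which amounts to the same thing), bound the diagonal by $k!\big(\sum_{p\le Y}|a_p|^2/p\big)^k$, and absorb the error terms using $Y^{3k}\le T/\log T$. Two small points. First, your treatment of the off-diagonal $\Lambda(m/n)$ term (``one checks \dots something small \dots in any case absorbed'') should be replaced by the clean observation the paper makes: both $m$ and $n$ have exactly $k$ prime factors counted with multiplicity, so for $m\neq n$ the ratio $m/n$ is never a nontrivial prime power and that term vanishes identically. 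Second, in the $\log T\log\log T$ error your Cauchy--Schwarz step actually yields $\ll Y^{3k}(\log T)^{3/2}\log\log T\,k!\big(\sum_{p\le Y}|a_p|^2/p\big)^k$ rather than the $Y^{2k}(\log T)^{5/2}$ you wrote (the corollary's prefactor is $Y^{2k}$ and your inner bound contributes a further $Y^k(\log T)^{1/2}$); this is harmless, since $Y^{3k}\le T/\log T$ still makes it $\ll k!\,N(T)\big(\sum_{p\le Y}|a_p|^2/p\big)^k$, and the paper's AM--GM variant of this estimate gives an even smaller bound.
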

\begin{proof}
    We begin by using the multinomial theorem to write 
        \[
        \Big(\sum_{p\leq Y}\sdfrac{a_p}{p^{1/2+i\g}}\Big)^{k}= \sum_{n\leq Y^k}\sdfrac{A_n}{n^{1/2+i\g}},
        \]
    where 
        \[
        A_n=\frac{k!}{{\a_1}!\dots {\a_r}!}a_{p_1}^{\a_1}\dots a_{p_r}^{\a_r} \qquad \text{for} \quad n=p_1^{\a_1}\dots p_r^{\a_r}.
        \]
    Here the $p_i$ are distinct primes, each of which is less than $Y,$ and the powers $\a_i\geq 1$ satisfy the condition $\a_1+\dots +\a_r=k.$ For $n$ that cannot be written as $p_1^{\a_1}\dots p_r^{\a_r}$ for such $p_i$'s and $\a_i$'s, we set $A_n=0.$ Now, by the second assertion of Lemma \ref{sumanbnlemma}, 
        \be\label{eq:proof of Soundmomentlemma3}
        \begin{split}
        \sum_{0 < \g \leq T}\bigg| \sum_{p\leq Y}\frac{a_p}{p^{1/2+i\g}}\bigg|^{2k}
        =&\, \sum_{0 < \g \leq T}\bigg(\sum_{n\leq Y^k}A_n n^{-1/2-i\g}\bigg) \bigg(\mkern4.5mu\overline{\mkern-4.5mu\, \sum_{m\leq Y^k} A_mm^{-1/2-i\g}}\bigg) \\
        =&\, N(T)\sum_{n\leq Y^k}\sdfrac{|A_n|^2}{n} 
        -\frac{T}{\pi}\Re \sum_{m\leq Y^k}\sum_{n\leq Y^k}\frac{A_n\overline{A_m}}{\sqrt{nm}}\frac{\Lambda(m/n)}{\sqrt{m/n}}\\
        &+O\bigg(Y^k\log T\log\log T \sum_{n\leq Y^k}\sum_{n<m\leq Y^k}\sdfrac{|A_n A_m|}{n}\,\bigg)
\\
        &+O\bigg( Y^k\log^2{T}\sum_{n\leq Y^k} \sdfrac{|A_n|^2}{n}\bigg).
        \end{split}
        \ee
Note that by the definition of $A_n,$
          \be\label{proof of Sound lemma}
            \begin{split}
       \sum_{n\leq Y^k}\frac{|A_n|^2}{n}
        &=\sum_{\substack{\a_i\geq 1, \sum \a_i =k \\ p_i\leq Y}}\bigg(\frac{k!}{{\a_1}!\dots {\a_r}!}\bigg)^2 \frac{|a_{p_1}|^{2\a_1}\dots|a_{p_r}|^{2\a_r}}{p_1^{\a_1}\dots p_r^{\a_r}} \\
        &\leq  k!\sum_{\substack{\a_i\geq 1, \sum \a_i =k,  \\ p_i\leq Y}}\frac{k!}{{\a_1}!\dots {\a_r}!} \frac{|a_{p_1}|^{2\a_1}\dots|a_{p_r}|^{2\a_r}}{p_1^{\a_1}\dots p_r^{\a_r}} 
        =k!\Big(\sum_{p\leq Y}\frac{|a_p|^2}{p}\,\Big)^k.
        \end{split}
        \ee
     Thus the first main term on the right-hand side of \eqref{eq:proof of Soundmomentlemma3} is
        \[
        \leq k!N(T) \Big(\sum_{p\leq Y}\sdfrac{|a_p|^2}{p}\Big)^{k}.
        \]
    The second main term on the right-hand side of \eqref{eq:proof of Soundmomentlemma3} vanishes
    because the number of prime divisors of both $m$ and $n$ is $k,$ and so if $m\neq n,$ then their ratio cannot be a nontrivial prime power. 

 Next, note that $Y^k\log^2{T} \ll N(T)$ by the choice of $Y.$ Then by \eqref{proof of Sound lemma}, the second error term is smaller than $ k! N(T) \Big(\sum_{p\leq Y}|a_p|^2 /p\Big)^{k}.$ 

    Finally, we estimate the first error term on the right hand-side of \eqref{eq:proof of Soundmomentlemma3} using the arithmetic mean-geometric mean inequality. 
        \begin{align*}
        Y^k\log T\log\log T\sum_{n\leq Y^k}\sum_{n<m\leq Y^k}\sdfrac{|A_n A_m|}{n}
        &\leq Y^k \log T\log\log T\sum_{m\leq Y^k}m \sum_{n\leq Y^k}\sdfrac{|A_n A_m|}{nm} \\
        &\leq  Y^{2k}\log T\log\log T\sum_{m\leq Y^k}\sum_{n\leq Y^k}\bigg(\sdfrac{|A_n|^2}{2n^2}+\sdfrac{|A_m|^2}{2m^2}\bigg)\\
        &\leq Y^{3k}\log T\log\log T\sum_{n\leq Y^k}\sdfrac{|A_n|^2}{n^2} \\
        &\leq  k!Y^{3k}\log^2 T\Big(\sum_{p\leq Y}\sdfrac{|a_p|^2}{p^2}\Big)^k.
        \end{align*}
    Note that the last step follows by an argument similar to the one we used in \eqref{proof of Sound lemma}. The bound we obtained here is even smaller than our bound for the second error term. 
    Hence, all of the four terms in \eqref{eq:proof of Soundmomentlemma3} are of smaller size than the $O$-term in the statement of the lemma. 
\end{proof}


\section{Moment Calculations} \label{moments}

In Lemma \ref{Re log zeta}, we saw that the sum
    \[
    \Re\CMcal{P}_X(\g+v)=\Re \sum_{p\leq X^2} \frac{1}{p^{1/2+i(\g+v)}} 
    \]
can be used to approximate the function $\log{|\zeta(\r+z)|}-M_X(\r, z)$, where
    \[
    M_X(\r, z)=m(\r+iv)\Big(\log\Big(\sdfrac{eu\log X}{4}\Big)-\sdfrac{u\log X}{4}\Big).
    \]
We will now calculate discrete integral moments of $ \Re\CMcal{P}_X(\g+v)$,
which will then allow us to compute such moments of the function 
$\log{|\zeta(\r+z)|}$ $-M_X(\r, z)$ under the assumption of RH and Montgomery's Pair Correlation Conjecture. 

We remind the reader that $z=u+iv$ denotes a complex number with
\[
     0 < u\leq \frac{1}{\log X} \quad \text{and} 
    \quad  v=O\Big(\frac{1}{\log X}\Big).
    \] 
We also take $\displaystyle X\leq T^{\tfrac{1}{8k}}$.
It will be useful in this chapter to use the notation
    \be\label{Psi}
    \Psi=\sum_{p\leq X^2} \frac 1p,
    \ee
and express some of our terms in terms of $\Psi$.
Note that by Mertens' Theorem, we have
    \[
    \Psi
    =\log\log X+O(1).
    \]
    
Our main theorem is the following.


\begin{thm}\label{moments of Re log zeta}
    Assume RH and Montgomery's Pair Correlation Conjecture.  
    Suppose that $k$ is a positive integer with 
    $k \ll \log\log\log T$, and let $T^{\tfrac{\d}{8k}} \leq X\leq T^{\tfrac{1}{8k}}$ for $0<\d \leq 1$ fixed.
    If $k$ is even, then
        \begin{align*}
         \sum_{0 < \g \leq T} \bigs(\log{|\zeta(\r+z)|}-M_X(\r, z)\bigs)^k & \\
        = \beta_kN(T) & \Psi^{\tfrac k2} 
        +O\Big(D^kk^{\tfrac{3k+2}{2}}\beta_k N(T)\Psi^{\tfrac{k-1}{2}}\Big).
        \end{align*}
    If $k$ is odd, then we have
        \begin{align*}
        \sum_{0 < \g \leq T} \bigs(\log{|\zeta(\r+z)|}-M_X(\r, z)\bigs)^k 
        =O\Big(D^k k^{\tfrac{3k+1}{2}}\beta_{k+1} N(T)\Psi^{\tfrac{k-1}{2}}\Big).
        \end{align*}
    Here, $D$ is a constant depending on $\d$ and $\Psi$ is as defined in \eqref{Psi}. We also have
        \be\label{beta}
        \beta_r=\frac{r!}{2^r (r/2)!} \quad \text{for an even positive integer } r.
        \ee  
\end{thm}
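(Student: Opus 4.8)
The plan is to reduce the computation of the $k$-th discrete moment of $\log|\zeta(\r+z)|-M_X(\r,z)$ to that of the Dirichlet polynomial $\Re\CMcal{P}_X(\g+v)$, and then to evaluate the latter moment via the binomial/multinomial expansion combined with Corollary~\ref{cor:moments}. First I would invoke Lemma~\ref{Re log zeta} to write $\log|\zeta(\r+z)|-M_X(\r,z) = \Re\CMcal{P}_X(\g+v) + O(\sum_{i=1}^4 r_i(X,\g+v))$, so that by the binomial theorem
\[
\sum_{0<\g\leq T}\bigl(\log|\zeta(\r+z)|-M_X(\r,z)\bigr)^k
= \sum_{j=0}^{k}\binom{k}{j}\sum_{0<\g\leq T}\bigl(\Re\CMcal{P}_X(\g+v)\bigr)^{k-j}\Bigl(O\bigl(\textstyle\sum_i r_i\bigr)\Bigr)^{j}.
\]
For $j\geq 1$ the error-term contributions must be shown to be absorbed into the stated $O$-term. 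This requires Cauchy--Schwarz (or Hölder) on the $\g$-sum, bounding $\sum_{\g}(\Re\CMcal{P}_X(\g+v))^{2(k-j)}$ by Lemma~\ref{Soundmomentlemma3} (giving essentially $(2(k-j))! N(T)\Psi^{k-j}$, hence the $\beta$-type and factorial growth), and bounding high moments of each $r_i$ separately. The bounds on the $r_i$ rely on the Landau--Gonek machinery of Section~\ref{sec:lemmas} for $r_1, r_2, r_3$ (these are short Dirichlet polynomials, so Lemma~\ref{Soundmomentlemma3} applies directly and gives savings of a power of $\log X$ relative to the main term), while $r_4$ involves the factor $1+\log^+(1/(\eta_{\g+v}\log X))$, whose moments are where Montgomery's Pair Correlation Conjecture enters — it guarantees $\eta_{\g+v}$ is not too often small, so that $\sum_\g r_4^{2(k-j)}$ is controlled.

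The heart of the argument is the main term $j=0$, i.e. evaluating $\sum_{0<\g\leq T}(\Re\CMcal{P}_X(\g+v))^k$. Writing $\Re\CMcal{P}_X(\g+v) = \tfrac12(\CMcal{P}_X(\g+v) + \overline{\CMcal{P}_X(\g+v)})$ and expanding the $k$-th power by the binomial theorem produces a sum of terms of the form $2^{-k}\binom{k}{\ell}\sum_\g \CMcal{P}_X(\g+v)^\ell\,\overline{\CMcal{P}_X(\g+v)}^{\,k-\ell}$. Each such term is handled by Corollary~\ref{cor:moments} with $N = X^2$, $a_p = b_p = p^{-1/2}$ (extended by zero to non-primes), provided $X^{2\ell}, X^{2(k-\ell)}\leq T$, which is ensured by the hypothesis $X\leq T^{1/(8k)}$. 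The $N(T)$-diagonal term in Corollary~\ref{cor:moments} is $N(T)\sum_{n\leq\min\{X^{2\ell},X^{2(k-\ell)}\}}A_n\overline{B_n}$; a standard computation shows that for $\ell=k/2$ (when $k$ is even) this equals $N(T)$ times the number of ways to pair up $k$ primes, giving $\binom{k}{k/2}^{-1}$... more precisely, after summing over $\ell$ with the weights $2^{-k}\binom{k}{\ell}$ and matching the combinatorics, the surviving diagonal contribution is $\beta_k N(T)\Psi^{k/2}(1+o(1))$, since $\sum_{p\leq X^2}1/p = \Psi$ and the number of perfect pairings of $k$ objects is $\beta_k = k!/(2^{k/2}(k/2)!)$; for $k$ odd there is no perfect pairing, so the diagonal is of lower order $O(\beta_{k+1}N(T)\Psi^{(k-1)/2})$ after accounting for near-diagonal terms. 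The prime-power off-diagonal term ($-\tfrac{T}{2\pi}\sum A_n\overline{B_m}\{\cdots\}$) must be shown to be of size $O(D^k k^{\cdots}\beta_k N(T)\Psi^{(k-1)/2})$: here $\Lambda(m/n)\neq 0$ forces $m/n$ (or $n/m$) to be a prime power, and since $n,m$ are products of exactly $\ell$ resp. $k-\ell$ primes each $\leq X^2$, the ratio being a prime power pins down all but essentially one prime, costing one factor of $\Psi$ and contributing an extra $\sum_p (\log p)/p \ll \log X$ which is beaten by the saved size; the two $O$-terms in Corollary~\ref{cor:moments} carry factors $X^{2k}\log^2 T$ or $X^{2k}\log T\log\log T$ times $\ell$-fold sums of $|A_n|$, and since $X^{2k}\leq T$ these are negligible against $N(T)\asymp T\log T$ after a crude bound on the divisor-type sums (using $\sum_{n\leq X^{2\ell}}|A_n|/\sqrt n \leq (\sum_p 1/\sqrt p\cdot\text{stuff})$, controlled because $X^{2\ell}$ is a small power of $T$).

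The main obstacle I anticipate is the bookkeeping of the $k$-dependent constants — ensuring that when $k$ grows (up to $k\ll\log\log\log T$) the accumulated factorial and exponential-in-$k$ losses from (i) the binomial expansion into $\Re$, (ii) applying Lemma~\ref{Soundmomentlemma3} with its $k!$, (iii) Cauchy--Schwarz splitting main and error terms, and (iv) the combinatorial count of pairings in the diagonal, all assemble into exactly the stated shape $D^k k^{(3k+2)/2}\beta_k N(T)\Psi^{(k-1)/2}$ (resp. the odd-$k$ version), rather than something larger. In particular, the constraint $k\ll\log\log\log T$ is presumably what makes $\Psi^{1/2}$ (roughly $\sqrt{\log\log X}\asymp\sqrt{\log\log T}$) dominate all the $k^{O(k)}$ and $D^k$ factors in the relative error, so the error term is genuinely smaller than the main term; verifying this balance carefully, and checking the $r_4$/Pair-Correlation estimate propagates the right power of $k$, is the delicate part. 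The secondary obstacle is confirming that the $v$-dependence (the $(m/n)^{iv}$ twist in Corollary~\ref{cor:moments}) and the range $v=O(1/\log X)$ do not affect the diagonal and only perturb the off-diagonal harmlessly, since $|(m/n)^{iv}|=1$; this should be routine but needs a remark.
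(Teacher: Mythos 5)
Your plan follows essentially the same route as the paper: approximate $\log|\zeta(\r+z)|-M_X(\r,z)$ by $\Re\CMcal{P}_X(\g+v)$ via Lemma~\ref{Re log zeta}, bound the remainder's high moments (with Lemma~\ref{Soundmomentlemma3} for $r_1,r_2,r_3$ and the Pair Correlation Conjecture for the $\eta_{\g+v}$ term), control the cross terms by Cauchy--Schwarz, and evaluate $\sum_\g(\Re\CMcal{P}_X(\g+v))^k$ through the binomial expansion and Corollary~\ref{cor:moments}, with the diagonal giving $\beta_k N(T)\Psi^{k/2}$ for even $k$ and the prime-power off-diagonal terms giving the odd-$k$ bound. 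The paper does exactly this (Propositions~\ref{moments of Re Dirichlet polyl v}--\ref{zero spacing eta v} plus the assembly in Section~\ref{moments}), so your proposal is correct in outline and matches the paper's argument.
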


    The coefficients $\beta_r$ are closely related to the Gaussian distribution. The moments of a random variable $Z$ that has Gaussian distribution with mean $0$ and variance $V$ are as follows:
        \[
        \begin{split}
        \mathbb{E}[Z^r]=
        \begin{cases}
       \beta_r  (2V)^{r/2} \quad &r \text{ : even,} \\
        0  \quad &r \text{ : odd.}
        \end{cases}
        \end{split}
        \]
        
    It is well known that if a random variable $Z'$ has the same moments, 
    then $Z'$ has the same distribution as $Z$ (see \cite[p. 413]{Billingsley}). 
    Hence the above theorem states that for $z$ chosen as above, the sequence
    $\bigs(\displaystyle \log{|\zeta(\r+z)|}-M_X(\r, z) \bigs)$ 
    has an approximate Gaussian distribution with mean $0$ and variance  $\tfrac12\Psi.$

\subsection{Moments of $\Re{\CMcal{P}_X(\g+v)}$}
We prove the following result for the moments of  
    \[
    \Re\CMcal{P}_X(\g+v)=\Re \sum_{p\leq X^2} \frac{1}{p^{1/2+i(\g+v)}}.
    \]
Even though this proposition will not be used later, it is still of interest since we obtain an explicit main term for odd moments of the real part of the polynomial. Note, however, that Theorem \ref{moments of Re log zeta} does not provide an explicit main term for odd $k$.

\begin{prop}\label{moments of Re Dirichlet polyl v}
Assume RH. 
Let $\CMcal{P}_X(\g+v)=\sum_{p\leq X^2} p^{-1/2-i(\g+v)}$ where
$X\leq T^{\frac{1}{8k}}$ and $k \ll \sqrt[6]{\log\log T}.$ Then for even $k,$
        \[
        \sum_{0 < \g \leq T}\bigs(\Re{\CMcal{P}_X(\g+v)}\bigs)^k
         = \beta_kN(T)\Psi^{\frac k2}
        +O\Big(k^2\beta_kN(T)\Psi^{\frac{k-4}{2}}\Big).
         \]
     If $k$ is odd, then 
         \[
         \begin{split}
         \sum_{0 < \g \leq T}\bigs(\Re{\CMcal{P}_X(\g+v)}\bigs)^k
         =-\frac{\beta_{k+1}}{\pi}\frac{\sin(2v\log X)-\sin(v\log 2)}{v}T
         &\Psi^{\frac{k-1}{2}} \\
         &+O\Big(k^2\beta_{k+1}T\log X\Psi^{\frac{k-3}{2}}\Big). 
         \end{split}
         \]

\end{prop}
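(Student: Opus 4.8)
The plan is to expand the $k$-th power by the binomial theorem and then apply Corollary~\ref{cor:moments}. Writing $\Re\CMcal{P}_X(\g+v)=\tfrac12\bigl(\CMcal{P}_X(\g+v)+\overline{\CMcal{P}_X(\g+v)}\bigr)$ gives
\[
\bigl(\Re\CMcal{P}_X(\g+v)\bigr)^{k}=\frac{1}{2^{k}}\sum_{j=0}^{k}\binom{k}{j}\,\CMcal{P}_X(\g+v)^{j}\,\overline{\CMcal{P}_X(\g+v)}^{k-j},
\]
and I would sum this over $0<\g\le T$ and apply Corollary~\ref{cor:moments} to each term with $N=X^{2}$ and $a_n=b_n=n^{-1/2}$ supported on primes $p\le X^{2}$. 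For that choice $A_n=B_n=r_j(n)/\sqrt n$, where $r_j(n)$ counts the ordered factorisations of $n$ into $j$ primes each $\le X^{2}$; in particular $A_n=0$ unless $\Omega(n)=j$ and all prime factors of $n$ are $\le X^{2}$. Since $X\le T^{1/8k}$ one has $X^{2k}\le T^{1/4}$, so $N^{j},N^{k-j}\le T$ and the two $O$-terms of Corollary~\ref{cor:moments} are $\ll X^{4k}(\log T)^{2}k!\,\Psi^{k}=o(T)$, negligible against every quantity in the proposition. It would then remain to analyse, for each $j$, the diagonal term $N(T)\sum_{n}A_n\overline{B_n}$ and the prime-power term $-\tfrac{T}{2\pi}\sum_{n,m}A_n\overline{B_m}(m/n)^{iv}\bigl\{\tfrac{\Lambda(m/n)}{\sqrt{m/n}}+\tfrac{\Lambda(n/m)}{\sqrt{n/m}}\bigr\}$.

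The diagonal term $\sum_{n}r_j(n)^{2}/n$ vanishes unless $\Omega(n)=j=k-j$, i.e.\ unless $k$ is even and $j=k/2$; for $n=\prod_p p^{\alpha_p}$ with $\sum_p\alpha_p=k/2$ we have $r_{k/2}(n)=(k/2)!/\prod_p\alpha_p!$, so this is a sum of squared multinomial coefficients. The contribution of the squarefree $n$ is $((k/2)!)^{2}e_{k/2}(\{1/p:p\le X^{2}\})$, with $e_{k/2}$ the $(k/2)$-th elementary symmetric function, and Newton's identities together with $\sum_p p^{-2}=O(1)$ give $e_{k/2}(\{1/p\})=\tfrac{1}{(k/2)!}\bigl(\Psi^{k/2}+O(k^{2}\Psi^{(k-4)/2})\bigr)$; the non-squarefree $n$ contribute $O(k^{2}(k/2)!\,\Psi^{(k-4)/2})$ by an elementary estimate of the type in \eqref{proof of Sound lemma}. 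Multiplying by $N(T)$ and the binomial weight and using $2^{-k}\binom{k}{k/2}(k/2)!=\beta_k$ (see \eqref{beta}) gives exactly the even-$k$ main term $\beta_k N(T)\Psi^{k/2}$ with error $O(k^{2}\beta_k N(T)\Psi^{(k-4)/2})$.

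For the prime-power term, $\Lambda(m/n)\ne0$ forces $m/n=q^{a}$ with $q$ prime, so $a=\Omega(m)-\Omega(n)=(k-j)-j\ge1$. The dominant case $a=1$ occurs only when $k$ is odd, at $j=(k-1)/2$ and --- via the $\Lambda(n/m)$ piece --- symmetrically at $j=(k+1)/2$. For $j=(k-1)/2$ and $m=nq$ the summand is $r_{(k-1)/2}(n)\,r_{(k+1)/2}(nq)\,(\log q)\,q^{iv}/(nq)$; restricting to squarefree $n$ with $q\nmid n$ isolates the leading part $((k+1)/2)!\,\Psi^{(k-1)/2}\sum_{q\le X^{2}}\tfrac{\log q}{q}q^{iv}$, with all other configurations $O\bigl(k^{2}((k+1)/2)!\,(\log X)\,\Psi^{(k-3)/2}\bigr)$ by Mertens' theorem and estimates like \eqref{proof of Sound lemma}. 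Adding the conjugate contribution from $j=(k+1)/2$ produces the factor $2\Re\sum_{q\le X^{2}}\tfrac{\log q}{q}q^{iv}$, which by partial summation from $\sum_{p\le t}\tfrac{\log p}{p}=\log t+O(1)$ equals $2\bigl(\sin(2v\log X)-\sin(v\log2)\bigr)/v+O(1)$ uniformly for $v=O(1/\log X)$. Multiplying by $-\tfrac{T}{2\pi}$ and the binomial weight and using $2^{-k}\binom{k}{(k-1)/2}((k+1)/2)!=\beta_{k+1}$ gives the stated odd-$k$ main term $-\tfrac{\beta_{k+1}}{\pi}\cdot\tfrac{\sin(2v\log X)-\sin(v\log2)}{v}\,T\Psi^{(k-1)/2}$. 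The higher prime powers ($a\ge2$: the $q^{2}$-terms at $j=k/2\pm1$ for $k$ even, the $q^{\ge3}$-terms for $k$ odd) are smaller since $\sum_q(\log q)/q^{a}=O(1)$ for $a\ge2$; a computation of the same shape bounds the even-$k$ $q^{2}$-terms by $O(k^{2}\beta_k N(T)\Psi^{(k-4)/2})$ (using $\Psi\ll k\log T$ and $N(T)\asymp T\log T$) and leaves the odd-$k$ terms well inside the stated error.

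The hard part will be the uniformity in $k$: one must show that every ``coincidence'' correction to the squarefree/disjoint configurations --- both in $\sum_n r_j(n)^{2}/n$ and in $\sum_{n,q}r_j(n)r_{k-j}(nq)/(nq)$ --- is down by a power of $\Psi$ at the cost of a combinatorial factor $\ll k^{2}$, and then that the total of these corrections, summed over $0\le j\le k$ with the binomial weights, stays within the claimed error. The hypothesis $k\ll\sqrt[6]{\log\log T}$ leaves ample room for this. It is routine but delicate bookkeeping with multinomial coefficients and elementary symmetric functions, in the spirit of \eqref{proof of Sound lemma} and the proof of Lemma~\ref{Soundmomentlemma3}.
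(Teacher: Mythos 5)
Your proposal is correct and follows essentially the same route as the paper: binomial expansion of $\bigs(\Re\CMcal{P}_X(\g+v)\bigs)^k$, Corollary~\ref{cor:moments} (Landau--Gonek) applied to each $S_j(v)$, the diagonal surviving only at $j=k/2$ and treated via the squarefree/non-squarefree split, and the $\Lambda$-term surviving at $j=(k\pm1)/2$ with the prime sum evaluated by partial summation to produce the $\bigs(\sin(2v\log X)-\sin(v\log 2)\bigs)/v$ factor. The only deviations (Newton's-identities phrasing of the diagonal, Mertens instead of the RH prime number theorem in the partial summation, and a slightly sharper absorption of the $\ell\geq2$ prime-power terms) are cosmetic and do not change the argument.
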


\begin{proof}
     Expanding the $k$th moment of $\Re\CMcal{P}_X(\g+v)$ by means of the identity $\displaystyle \Re{z}=\frac{z+\overline{z}}{2}$ and the binomial theorem, we see that
        \[
        \begin{split}
        \sum_{0 < \g \leq T}\bigs(\Re{\CMcal{P}_X(\g+v)}\bigs)^{k}
        &=\frac{1}{2^k}\sum_{j=0}^k\binom{k}{j}\sum_{0 < \g \leq T}\CMcal{P}_X(\g+v)^j \overline{\CMcal{P}_X(\g+v)}^{k-j}\\
        &=\frac{1}{2^k}\sum_{j=0}^k\binom{k}{j} S_j(v).
        \end{split}
        \]
Thus, it suffices to estimate the sums 
        \[
        S_j(v):=
        \sum_{0 < \g \leq T}\CMcal{P}_X(\g+v)^j \overline{\CMcal{P}_X(\g+v)}^{k-j}
        \]
for $j=0,1,\dots, k$.        
    We write
        \[
        \CMcal{P}_X(\g+v)^j=\sum_{\substack{n=p_1\dots p_j,\\ p_i\leq X^2}}\frac{a_j(n)}{n^{1/2+i(\g+v)}} \quad \text{and} \quad
        \CMcal{P}_X(\g+v)^{k-j}=\sum_{\substack{m=q_1\dots q_{k-j},\\ q_i\leq X^2}}\frac{a_{k-j}(m)}{m^{1/2+i(\g+v)}},
        \]
    where $a_{r}(p_1\dots p_{r})$ denotes the number of permutations of the primes $p_1,\dots, p_{r}$. It is clear that 
    $a_r(p_1\dots p_{r}) \leq r!$, where the equality holds
    if and only if the primes $p_1,\dots ,p_{r}$ are all distinct, in other words, the product $p_1\dots p_{r}$ is square-free.
    Further note that $a_0(n)$ equals $1$ if $n=1$,
    and equals $0$ otherwise.
    
    To avoid a cumbersome notation, throughout the proof of this proposition we suppose that the number $n$ is product of $j$ primes, each of which is at most $X^2$, and $m$ is product of $k-j$ primes, each of size at most $X^2$. 
    
    By Corollary \ref{cor:moments}, 
    we have the following expression for $S_j(v)$:
        \be\label{eq:S_j(v)}
        \begin{split}
        S_j(v)
        =N(T)&\sum_{n}\frac{a_{j}(n)a_{k-j}(n)}{n} 
        -\frac{T}{2\pi}\sum_{m, n}\frac{a_j(n)a_{k-j}(m)}{\sqrt{mn}}\Big(\frac{m}{n}\Big)^{iv}\bigg\{\frac{\Lambda(m/n)}{\sqrt{m/n}}+\frac{\Lambda(n/m)}{\sqrt{n/m}}\bigg\} \\
        &+O\bigg( X^{2k}\log T\log\log T\Big(\sum_n\frac{a_j(n)}{n}\sum_{m>n}a_{k-j}(m)+\sum_m\frac{a_{k-j}(m)}{m}\sum_{n>m}a_j(n)\Big)\bigg)\\
        &+O\bigg(X^{2k}\log^2{T}\Big(\sum_m\frac{a_{k-j}(m)^2}{m}+\sum_n\frac{a_j(n)^2}{n}\Big)\bigg);
        \end{split}
        \ee
   here we have suppressed the conditions of summation. It will be useful later on to note that
         \be\label{a_j sum}
         \Psi^j
         =\sum_n \frac{a_j(n)}{n}
         =\sum_{n \text{ sq-free}} \frac{a_j(n)}{n}
         +\sum_{n \text{ not sq-free}} \frac{a_j(n)}{n}.
         \ee
    The second sum on the right is zero if $j=0$ or $1$.  
    If $j \geq 2$, then
        \[
        \sum_{n \text{ not sq-free}} \frac{a_j(n)}{n}
        =\sum_{q\leq X^2} \frac{1}{q^2} 
        \sum_{n_1=\frac{n}{q^2}} \frac{a_j(q^2 n_1)}{n_1} 
        \leq \binom{j}{2} \sum_{n_1} \frac{a_{j-2}(n_1)}{n_1}
        \ll j^2 \Psi^{j-2}.
        \]
    Note the last estimate still holds when $j=0$ or $1$. 
    Hence, 
        \be\label{not sq-free}
        \sum_{n \text{ not sq-free}} \frac{a_j(n)}{n}
        \ll j^2 \Psi^{j-2}.
        \ee  
    Combining this with \eqref{a_j sum}, we then see that
        \be\label{sq-free}
        \sum_{n \text{ sq-free}} \frac{a_j(n)}{n}
        = \Psi^j+O(j^2 \Psi^{j-2}). 
        \ee
        
     Returning to \eqref{eq:S_j(v)}, we see that
        \be \label{estimate1}
        \sum_n\frac{a_j(n)}{n}\sum_{m>n}a_{k-j}(m) 
        \ll k! X^{2(k-j)}\sum_n\frac{a_j(n)}{n}
        \leq k!X^{2k}\Psi^k,
        \ee
    and
        \be \label{estimate2}
        \sum_n\frac{a_j(n)^2}{n}
        \ll k!\sum_n\frac{a_j(n)}{n} 
        \leq k!\Psi^k.
        \ee
    Hence, both of the error terms in \eqref{eq:S_j(v)} are 
    $\ll k!\Psi^k\sqrt T\log^2{T}$, and we can write
        \be\label{eq:S_j(v) rewritten}
        \begin{split}
        S_j(v)
        =&\, N(T)\sum_{n}\frac{a_j(n)a_{k-j}(n)}{n} 
        -\frac{T}{2\pi}\sum_{m, n}\frac{a_j(n)a_{k-j}(m)}{\sqrt{mn}}\Big(\frac{m}{n}\Big)^{iv}
        \bigg\{\frac{\Lambda(m/n)}{\sqrt{m/n}}+\frac{\Lambda(n/m)}{\sqrt{n/m}}\bigg\}\\
        &+O\bigs(k!\Psi^k \sqrt T\log^2{T}\bigs) \\
        =&\, S_{j,1}(v)+S_{j,2}(v)+O\bigs(k!\Psi^k \sqrt T\log^2{T}\bigs).
        \end{split}
        \ee
    Next, we estimate the terms $S_{j,1}(v)$ and $S_{j,2}(v)$.
    First observe that $S_{j,1}(v)$ vanishes unless $k$ is even and $j=\frac{k}{2}$. In that case, using \eqref{not sq-free} and \eqref{sq-free},
    we obtain
        \be\label{eq:Skby2(v)}
        \begin{split}
        S_{\frac k2,1}(v) 
        =& \, N(T)\sum_{n}\frac{a_{k/2}^2(n)}{n} \\
        =& \, (k/2)!N(T)\sum_{n\text{ sq-free}}\frac{a_{k/2}(n)}{n}
        +O\bigg((k/2)!N(T)\sum_{n\text{ not sq-free}}\frac{a_{k/2}(n)}{n}\bigg). \\
        =&\, (k/2)!N(T)\Psi^{k/2}+O\bigs(k^2(k/2)!N(T)\Psi^{k/2-2}\bigs).
        \end{split}
        \ee
    Now consider the term $S_{j,2}(v)$ in \eqref{eq:S_j(v) rewritten}. 
    In order for this term not to vanish, 
    one of the ratios $m/n$ and $n/m$ must be a prime power. 
    This condition puts a restriction on $j$:
    if $m/n=q^{\ell}$, then $j=\frac{k-\ell}{2}$, and if $n/m=q^{\ell}$, then $j=\frac{k+\ell}{2}$ . 
    Furthermore, the terms with $\ell\geq 2$ in $S_{j,2}(v)$
    contribute
        \[
       \ll T\sum_{\substack{m=nq^2,\\q\leq X^2}}\frac{a_j(n)a_{k-j}(nq^2)}{n}\frac{\log q}{q^2}
        +T\sum_{\substack{n=mq^2,\\q\leq X^2}}\frac{a_j(mq^2)a_{k-j}(m)}{m}\frac{\log q}{q^2}.
        \] 
    In the first term note that $\sum_{q\leq X^2} \tfrac{\log q}{q^2} \ll 1$, and $\displaystyle a_{k-j}(nq^2)\leq (k-j)!$. 
 Hence, this term is
        \[
        \ll (k-j)! T\sum_{n} \frac{a_j(n)}{n}
        \leq k! T\Psi^k.
        \]
  Similarly, the second error term is also $\ll k! T \Psi^k$. 
  Therefore
        \be\label{eq: Landau term of S_j(v)}
        \begin{split}
       S_{j,2}(v)
        =& \,-\frac{T}{2\pi}\sum_{q\leq X^2}\sum_{n=mq}\frac{a_j(n)a_{k-j}(m)}{\sqrt{mn}}\frac{\log q}{q^{1/2+iv}}
        -\frac{T}{2\pi}\sum_{q\leq X^2}\sum_{m=nq}\frac{a_j(n)a_{k-j}(m)}{\sqrt{mn}}\frac{\log q}{q^{1/2-iv}} \\
         &+O\bigs(k! T\Psi^k\bigs).
        \end{split}
        \ee 
    Both the main terms here correspond to the case $\ell=1$,
    so we must have $j=\frac{k-1}{2}$ or $j=\frac{k+1}{2}$.
    In particular, $k$ must be odd.
    Note that when $j=\frac{k-1}{2}$, the first sum vanishes, and when 
    $j=\frac{k+1}{2}$, the second sum vanishes. 
    Furthermore,
        \be\label{conjugates}
        S_{\frac{k-1}{2},2}(v)= \overline{S_{\frac{k+1}{2},2}(v)}.
        \ee
    Thus, it suffices to estimate $S_{\frac{k-1}{2},2}(v)$. We have    
        \[
        \begin{split}
        S_{\frac{k-1}{2},2}(v)
        = -\frac{T}{2\pi}\sum_{q\leq X^2}
        \sum_{n}\frac{a_{(k-1)/2}(n)a_{(k+1)/2}(nq)}{n}\frac{\log q}{q^{1+iv}}
        +O\bigs(k! T\Psi^k\bigs).
        \end{split}
        \]
    We rewrite this as 
        \be\label{eq:Sk-1by2(v)}
        \begin{split}
         S_{\frac{k-1}{2},2}(v)
        =&\, -\frac{T}{2\pi}\sum_{q\leq X^2}
        \Big(\sum_{\substack{n\\ q \nmid n}}+\sum_{\substack{n \\ q \mid n}}\Big)
        \frac{a_{(k-1)/2}(n)a_{(k+1)/2}(nq)}{n}\frac{\log q}{q^{1-iv}}
        +O\bigs(k! T\Psi^k\bigs) \\
        =&\quad T_1+T_2+O\bigs(k! T\Psi^k\bigs).
        \end{split}
        \ee
    Note that if $q\nmid n$, then
        \[
        a_{(k+1)/2}(nq)=\tfrac{(k+1)}{2}a_{(k-1)/2}(n) .
        \]
    Thus
        \be\label{eq:Sk-1by2(v) main term}
        T_1
        =-\frac{(k+1)T}{4\pi}\sum_{q\leq X^2}\frac{\log q}{q^{1-iv}}
        \sum_{\substack{n\\ q\nmid n}} \frac{a_{(k-1)/2}^2(n)}{n}.
        \ee
    We separate the sum over $n$ into sums for which n which does or does not satisfy $a_{(k-1)/2}(n)=\left((k-1)/2\right)!$.
    We then find that
        \be\label{q nmid n}
        \sum_{n: q\nmid n}\frac{a_{(k-1)/2}^2(n)}{n} 
        =\left((k-1)/2\right)!
        \sum_{\substack{q\nmid n \\n \text{ sq-free}}}\frac{a_{(k-1)/2}(n)}{n} 
        +O\bigg(\left((k-1)/2\right)!
        \sum_{\substack{q\nmid n \\ n \text{ not sq-free}}}\frac{a_{(k-1)/2}(n)}{n}\bigg).
        \ee
     By \eqref{not sq-free}, the error term is 
     $O\bigs(k^2\left((k-1)/2\right)! \Psi^{\frac{k-5}{2}}\bigs)$.
     To treat the main term, we set
         \[
         \Psi'=\sum_{\substack{p\leq X^2\\ p\neq q}} \frac 1p,
         \]
      and note that the same analysis that led to \eqref{sq-free}
      shows that
        \be\label{sq free q nmid n}
       \sum_{\substack{q\nmid n \\ n \text{ sq-free}}}\frac{a_j(n)}{n}
       =(\Psi ')^j+O\bigs(j^2(\Psi ')^{j-2}\bigs).
        \ee 
    Now, since $(1-x)^j=1+O(jx)$ for $0\leq x \leq 1$,
    we see that $(\Psi ')^j=(\Psi-1/q)^j=\Psi^j+O\big(j \Psi^{j-1}\big)$.
    Thus,
        \[
        \sum_{\substack{q\nmid n \\ n \text{ sq-free}}}\frac{a_j(n)}{n}
        =\Psi^j+O\big(j^2 \Psi^{j-1}\big). 
        \]
    Combining this and our estimate for the error term in \eqref{q nmid n}, 
    we see that
       \be\label{q nmid n second}
       \sum_{\substack{n \\ q\nmid n}}\frac{a_{(k-1)/2}^2(n)}{n} 
        =\left((k-1)/2\right)!\Psi^{\frac{k-1}{2}}
        +O\bigs(k^2\left((k-1)/2\right)!\Psi^{\frac{k-3}{2}}\bigs).
        \ee
     We will insert this into \eqref{eq:Sk-1by2(v) main term}. 
     To handle the sum over $q$ we use the prime number theorem which, under RH, says that
        \[
        \sum_{q\leq x} \log q =x+O\bigs(x^{\frac12}\log^2{x}\bigs).
        \]
    Partial summation then gives
        \[
        \sum_{q\leq X^2} \frac{\log q}{q^{1-iv}}=\frac{X^{2iv}-2^{iv}}{iv}+O(1).
        \]
    Substituting this and \eqref{q nmid n second} into \eqref{eq:Sk-1by2(v) main term}, we find that
        \be\label{T_1 final}
        T_1
        =-\frac{T}{2\pi}((k+1)/2)! \frac{X^{2iv}-2^{iv}}{iv}
        \Psi^{\frac{k-1}{2}} 
        +O\bigs(k^2((k+1)/2)!T\log X\Psi^{\frac{k-3}{2}}\bigs).
        \ee
    We now proceed to the estimation of the sum $T_2$ in 
    \eqref{eq:Sk-1by2(v)}. 
   Writing $n=qn_1$, we have 
        \[
        T_2
        = -\frac{T}{2\pi} \sum_{q\leq X^2} \frac{\log q}{q^{1-iv}}
        \sum_{n_1} \frac{a_{(k-1)/2}(n_1q)a_{(k+1)/2}(n_1q^2)}{n_1 q},
        \]
     where $n_1$ runs over integers with exactly $(k-3)/2$ prime factors.
     Since  $a_{(k+1)/2}(n_1q^2)\leq \left((k+1)/2\right)!$, it easily follows that
       \[
       T_2 
       \ll T\left((k+1)/2\right)!
        \sum_{q\leq X^2}\frac{\log q}{q}
        \sum_{n_1} \frac{a_{(k-1)/2}(n_1q)}{n_1 q}.
        \]
    Now $a_{(k-1)/2}(n_1q) \leq \frac{k-1}{2} a_{(k-3)/2}(n_1)$. 
    Hence,
        \[
        T_2 
        \ll 
        k \left((k+1)/2\right)! \sum_{q\leq X^2}\frac{\log q}{q^2}
        \sum_{n_1} \frac{a_{(k-3)/2}(n_1)}{n_1}
        \ll 
        k \left((k+1)/2\right)! T \Psi^{\frac{k-3}{2}}.
        \]
    Combining this estimate with \eqref{T_1 final} in \eqref{eq:Sk-1by2(v)},
     we see that 
        \be\label{S_(k-1)/2}
        S_{\frac{k-1}{2},2}(v)
        =-\frac{T}{2\pi}((k+1)/2)!
        \frac{X^{2iv}-2^{iv}}{iv}\Psi^{\frac{k-1}{2}} 
        +O\bigs(k^2((k+1)/2)!T\log X \Psi^{\frac{k-3}{2}}\bigs)
        +O\bigs(k!T\Psi^k\bigs).
        \ee
     By \eqref{eq:S_j(v) rewritten}, the same estimate holds for $S_{\frac{k-1}{2}}(v)$.
     By \eqref{conjugates},
     $ S_{\frac{k+1}{2}}(v)$ and $S_{\frac{k+1}{2},2}(v)$ are both equal to the conjugate of this.
    Moreover, whenever $j\neq \frac{k-1}{2}, \frac{k}{2}$ or $\frac{k+1}{2}$, 
   we have
        \be\label{Sj(v) for most j}
        S_j(v)=O\bigs(k! T \Psi^k\bigs)
        \ee
     by \eqref{eq:S_j(v) rewritten} and \eqref{eq: Landau term of S_j(v)}.    
     
    Now we can complete the proof of the proposition. Recall that
        \be \label{setup sum Sj(v)}
        \sum_{0 < \g \leq T} \bigs(\Re{\CMcal{P}_X(\g+v)}\bigs)^k=\frac{1}{2^k}\sum_{j=0}^{k}{k\choose j}S_j(v). 
        \ee
    If $k$ is even, then by \eqref{eq:Skby2(v)} and \eqref{Sj(v) for most j}
        \be \notag
        \begin{split}
        \sum_{0 < \g \leq T} \bigs(\Re{\CMcal{P}_X(\g+v)}\bigs)^k 
        =&\, \frac{1}{2^k}{k\choose k/2}\Big((k/2)!N(T) \Psi^{\frac k2}
        +O\bigs(k^2(k/2)!N(T)\Psi^{\frac{k-4}{2}} \, \bigs)\Big) \\ 
        &+O\bigg(\sum_{\substack{0\leq j\leq k\\ j\neq \frac{k}{2}}}\frac{1}{2^k}{k\choose j}k! T\Psi^k\bigg) \\
        =&\, \beta_kN(T)\Psi^{\frac k2}
        +O\bigs(k^2\beta_kN(T)\Psi^{\frac{k-4}{2}}\bigs)
        +O\bigs(k! T \Psi^k\bigs).
        \end{split}
        \ee
    If $k$ is odd, then by \eqref{Sj(v) for most j} and \eqref{setup sum Sj(v)}
        \[
        \begin{split}
        \sum_{0 < \g \leq T} \bigs(\Re{\CMcal{P}_X(\g+v)}\bigs)^k 
        =&\, \frac{1}{2^k} \binom{k}{\frac{k-1}{2}} 
        \Big(S_{\frac{k-1}{2}}(v)+S_{\frac{k+1}{2}}(v)\Big)
        +O\bigg(
        \sum_{\substack{0\leq j \leq k, \\j\neq \frac{k-1}{2},\frac{k+1}{2}}}
        \frac{1}{2^k} \binom{k}{j} k! T\Psi^k\bigg) \\
        =&\, \frac{1}{2^k} \binom{k}{\frac{k-1}{2}} 
        \Big(S_{\frac{k-1}{2}}(v)+\overline{S_{\frac{k-1}{2}}(v)}\Big)
        +O\bigs(k!T\Psi^k\bigs).
        \end{split}
        \]
    Using \eqref{S_(k-1)/2} (and the remark immediately after), we obtain
        \[
        \begin{split}
        \sum_{0 < \g \leq T} \bigs(\Re{\CMcal{P}_X(\g+v)}\bigs)^k
        =& -\frac{1}{2^{k}}{k\choose {\frac{k-1}{2}}}((k+1)/2)!
        \frac{T}{\pi}\frac{\sin(2v\log X)-\sin(v\log 2)}{v}\Psi^{\frac{k-1}{2}} \\
        &+O\Big(\frac{1}{2^{k}}{k\choose {\frac{k-1}{2}}}k^2((k+1)/2)!T\log X
        \Psi^{\frac{k-3}{2}}\Big) \\
        =& -\beta_{k+1} \frac{T}{\pi}
        \Big(\sdfrac{\sin(2v\log X)-\sin(v\log 2)}{v}\Big)\Psi^{\frac{k-1}{2}} 
        +O\bigs(k^2\beta_{k+1}T\log X\Psi^{\frac{k-3}{2}}\bigs) \\
        &+O\bigs(k!T\Psi^k\bigs).
        \end{split}
        \]
     This completes the proof.
\end{proof}

\subsection{Other Moment Calculations}
%

In Lemma \ref{Re log zeta}, we saw that the sum
    $
    \Re\CMcal{P}_X(\g+v)
    $
can be used to approximate the function $\log{|\zeta(\r+z)|}-M_X(\r, z),$ where
    \[
    M_X(\r, z)=m(\r+iv)\Big(\log\Big(\sdfrac{eu\log X}{4}\Big)-\sdfrac{u\log X}{4}\Big).
    \]
In the following result, we provide an estimate for the average difference between the function $\log{|\zeta(\r+z)|}-M_X(\r, z)$ and the sum $\Re{\CMcal{P}_X(\g+v)}.$
  
\begin{prop}\label{Re log zeta error}
Let $T^{\frac{\d}{8k}} \leq X\leq T^{\frac{1}{8k}}$ for $0<\d \leq 1.$
Then for a constant $D$ depending on $\d,$ we have
        \[
        \sum_{0 < \g \leq T}
        \bigs(\log{|\zeta(\r+z)|}-M_X(\r, z)-\Re{\CMcal{P}_X(\g+v)}\bigs)^{k}  
        \ll (Dk)^{2k}N(T).
        \]        
\end{prop}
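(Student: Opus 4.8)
The plan is to reduce the assertion, via Lemma~\ref{Re log zeta}, to bounding the $k$th moments of the four error terms $r_i(X,\g+v)$. Indeed, \eqref{eq:Re log zeta} gives
\[
\log|\zeta(\r+z)|-M_X(\r,z)-\Re\CMcal{P}_X(\g+v)\ll\sum_{i=1}^{4}r_i(X,\g+v),
\]
so the elementary bound $(t_1+\dots+t_4)^k\le 4^{k-1}(t_1^k+\dots+t_4^k)$ for $t_i\ge0$ shows it suffices to prove $\sum_{0<\g\le T}r_i(X,\g+v)^k\ll(Dk)^{2k}N(T)$ for each $i$. For even $k$ this is an even moment of the modulus of a Dirichlet polynomial (after Jensen's inequality is used to move the power inside the $\s$-integral, in the case of $r_3$); for odd $k$ one interpolates by Cauchy--Schwarz between the $(k-1)$st and $(k+1)$st moments. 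So it is enough to estimate $\sum_{0<\g\le T}r_i(X,\g+v)^{2m}$ for $1\le m\le k$, and since $X^2\le T^{1/(4k)}$ the hypotheses of Lemma~\ref{Soundmomentlemma3} and Corollary~\ref{cor:moments} hold at every such moment order, with room to spare for a factor of $\log T$. The shift $v$ is harmless throughout, as it can be absorbed into the coefficients of the relevant Dirichlet polynomials without changing their moduli.

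For $r_1$ and $r_3$ this is a direct appeal to Lemma~\ref{Soundmomentlemma3}. The polynomial defining $r_1$ is supported on primes in $(X,X^2]$, where $\sum_{X<p\le X^2}1/p=\log 2+o(1)\ll1$, so $\sum_{0<\g\le T}r_1^{2m}\ll m!\,C^m N(T)$. For $r_3$, Jensen's inequality applied with the probability measure $d\mu(\s)=\log X\cdot X^{1/2-\s}\,d\s$ on $[\tfrac12,\infty)$ gives $r_3^{2m}\le(\log X)^{-4m}\int_{1/2}^{\infty}f(\s)^{2m}\,\log X\,X^{1/2-\s}\,d\s$, where $f(\s)=|\sum_{p\le X^2}\Lambda_X(p)\log(Xp)p^{-\s-i(\g+v)}|$; for fixed $\s$ the inner $2m$th moment over $\g$ is, by Lemma~\ref{Soundmomentlemma3}, $\ll m!\,(\log X)^{2m}\bigl(\sum_{p\le X^2}(\log p)^2 p^{-2\s}\bigr)^m N(T)$, using $\log(Xp)\le 3\log X$. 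After the substitution $\s=\tfrac12+t/\log X$ one has $\sum_{p\le X^2}(\log p)^2p^{-2\s}\ll(\log X)^2\min(1,t^{-2})$ by partial summation, so the inner moment is $\ll m!\,(\log X)^{4m}(\min(1,t^{-2}))^m N(T)$, and the powers of $\log X$ cancel $(\log X)^{-4m}$; since $\int_0^{\infty}(\min(1,t^{-2}))^m e^{-t}\,dt=O(1)$ this leaves $\sum_{0<\g\le T}r_3^{2m}\ll m!\,N(T)$.

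For $r_2$ one observes that $p^{1+2i(\g+v)}=(p^2)^{1/2+i(\g+v)}$, so $r_2$ is the modulus of the Dirichlet polynomial $\sum_{p\le X}w_X(p^2)(p^2)^{-1/2-i(\g+v)}$, which has frequency $\g+v$ and coefficients supported on squares of primes; hence Corollary~\ref{cor:moments} applies directly. Its diagonal main term contributes $\ll m!\bigl(\sum_{p\le X}p^{-2}\bigr)^m N(T)\ll m!\,C^m N(T)$; the Landau--Gonek cross term vanishes because the ratio of two products of $m$ prime-squares is a prime power only when the two products are equal; and the error terms are $\ll X^{O(m)}\log^2 T\,(m!)^{O(1)}\ll N(T)$ by the choice of $X$. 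For $r_4$, write $E(X,\g+v)\le|\sum_{p\le X^2}\Lambda_X(p)p^{-\s_1-i(\g+v)}|+O(\log X)$, bounding the prime-power part of $E$ by $O(\log X)$ deterministically (using $\sum_{p\le X}(\log p)/p=\log X+O(1)$) and noting $\log(\g+v)\le 2\log T$; since $\log T/\log X\ll k/\d$ this gives $E(X,\g+v)/\log X\ll|\sum_{p\le X^2}c_p\,p^{-1/2-i(\g+v)}|+O(k/\d)$ with $\sum_p|c_p|^2/p\ll1$, and therefore $\sum_{0<\g\le T}(E(X,\g+v)/\log X)^{2m}\ll(Dk)^{2m}N(T)$ by Lemma~\ref{Soundmomentlemma3} once more. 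Finally the factor $1+\log^{+}(1/(\eta_{\g+v}\log X))$ is split off by Cauchy--Schwarz, which reduces matters to bounding $\sum_{0<\g\le T}[\log^{+}(1/(\eta_{\g+v}\log X))]^{4m}$; by the layer-cake identity this follows from the tail estimate $\#\{0<\g\le T:\eta_{\g+v}\le e^{-\lambda}/\log X\}\ll Dk\,e^{-\lambda}N(T)$, which in turn comes from the zero-spacing input in force here (Hypothesis~$\mathscr H_\a$, which follows from Montgomery's Pair Correlation Conjecture), and yields $\sum_{0<\g\le T}[\log^{+}(1/(\eta_{\g+v}\log X))]^{4m}\ll Dk\,(4m)!\,N(T)$. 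Combining the four estimates and reversing the reductions of the first paragraph completes the proof.

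The main obstacle is this last point. Every error term apart from the factor $1+\log^{+}(1/(\eta_{\g+v}\log X))$ succumbs to the Landau--Gonek moment machinery of Section~\ref{sec:lemmas}, since they are all moduli of short Dirichlet polynomials over the primes whose ``variance'' $\sum_p|c_p|^2/p$ is $O(1)$, and the only bookkeeping required is tracking how the powers of $\log X$ in $r_3$ and $r_4$ cancel against the normalising factors. By contrast $\log^{+}(1/(\eta_{\g+v}\log X))$ can be arbitrarily large on a sparse set of ordinates, with no unconditional control, so bounding its high moments genuinely requires a quantitative statement about how often an ordinate $\g+v$ has a zero of $\zeta$ very close to it --- precisely the place where the hypothesis on the distribution of the zeros enters the argument.
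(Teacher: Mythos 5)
Your treatment of $r_1$, $r_2$, $r_3$ and of the factor $E(X,\g+v)/\log X$ follows essentially the paper's route (Cauchy--Schwarz or interpolation to even moments, Lemma~\ref{Soundmomentlemma3}/Corollary~\ref{cor:moments}, H\"older or Jensen to move the power inside the $\s$-integral for $r_3$, a deterministic $O(\log X)$ bound for the prime-power part of $E$ and $\log(\g+v)\ll (k/\d)\log X$), and that part is fine. The genuine gap is exactly at the point you single out as the crux: the tail estimate
\[
\#\Big\{0<\g\leq T:\ \eta_{\g+v}\leq \frac{e^{-\lambda}}{\log X}\Big\}\ \ll\ \frac{k}{\d}\,e^{-\lambda}N(T),
\]
which you attribute to Hypothesis $\mathscr H_\a$. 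Hypothesis $\mathscr H_\a$ only counts ordinates whose \emph{successor gap} $\g^{+}-\g$ is small; it says nothing about how often some other ordinate $\g'$ lies within $e^{-\lambda}/\log X$ of the \emph{shifted} point $\g+v$. When $v\neq0$ the event $\eta_{\g+v}\leq e^{-\lambda}/\log X$ means $\g-\g'$ lies in a window of length $2e^{-\lambda}/\log X$ centred at $-v$, i.e.\ a pair-correlation count at a nonzero offset (of size up to a constant times $k/\d$ mean spacings), and $\mathscr H_\a$ gives no control over such off-diagonal pair counts. This is precisely why the paper assumes Montgomery's Pair Correlation Conjecture here (Proposition~\ref{zero spacing eta v} bounds the number of pairs with $\g-\g'\in[-v-e^{-j}/\log X,\,-v+e^{-j}/\log X]$ by $N(T)$ times the length of the rescaled window), and why it remarks that $\mathscr H_\a$ does not appear sufficient for Theorem~\ref{distr of Re log zeta}, offering instead the alternative shifted-spacing hypothesis with $\g^{+}_{\t}$. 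Your parenthetical ``Hypothesis $\mathscr H_\a$, which follows from Montgomery's Pair Correlation Conjecture'' also runs the implication the unhelpful way: you are invoking the weaker statement, and the weaker statement does not yield the tail bound you need.

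The repair is straightforward and is what the paper does: apply the Pair Correlation Conjecture directly to the window centred at $-v$, which gives the count $\ll \dfrac{e^{-j}\log T}{\log X}N(T)\ll \dfrac{k}{\d}e^{-j}N(T)$ and hence your layer-cake bound. Under $\mathscr H_\a$ alone your argument is valid only in the unshifted case $v=0$ (where the nearest ordinate other than $\g+v$ is genuinely the successor or predecessor of $\g$, and the exponent becomes $\a$ rather than $1$, changing only the constants); this is the content of Proposition~\ref{Re log zeta' error} in Section~\ref{proof of thm Re log zeta'}, not of the present proposition.
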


\begin{proof}\let\qed\relax
    By \eqref{eq:Re log zeta} from Lemma \ref{Re log zeta}
        \[
        \log|\zeta(\r+z)|-M(\r, z)
        -\Re\CMcal{P}_X(\g+v) 
        \ll \sum_{i=1}^{4}r_i(X, \g +v),
        \]
     where the $O$-terms are as given in the statement of the lemma.
    We take the $k$th power of each sides of this equation and sum over $0 < \g \leq T.$ Then the right-hand side is
        \be\label{remainder to k}
        4^k \sum_{i=1}^4 \bigg( \sum_{0 < \g \leq T}r_i(X, \g +v)^k\bigg).
        \ee
    Here we have used the elementary inequality
         \be\label{convexity}
         \Big(\frac 1n \sum_{i=1}^n x_i \Big)^k \leq \frac{1}{n} \sum_{i=1}^n x_i^k,
         \ee 
     which is valid for positive numbers $x_1, x_2, \dots, x_n$ and $k\geq 1.$

    The first two error terms in \eqref{remainder to k} can be estimated in a straightforward manner by applying Lemma \ref{Soundmomentlemma3}. 
    For the first one, we note that by the definition of $w_X(n)$ in \eqref{w_X},
        \begin{dmath*}
        1-w_X(p)=
        \begin{cases}
        0 \quad &\text{if}  \quad p\leq X,\\
        \frac{\log(p/X)}{\log X} \quad  &\text{if} \quad  X\leq p\leq X^2.
        \end{cases}
        \end{dmath*}
    Thus
        \be \notag
        \Big|\frac{1-w_X(p)}{p^{iv}}\Big|^2<\frac{\log^2 p}{\log^2 X}\quad  \text{ for } \quad p\leq X^2.
        \ee
    By the Cauchy-Schwarz inequality and then Lemma \ref{Soundmomentlemma3}, 
        \be\label{1-w}
        \begin{split}
        \sum_{0 < \g \leq T}\Big|\sum_{p\leq X^2}
        \sdfrac{1-w_X(p)}{p^{1/2+i(\g+v)}}\Big|^{k} 
        &\leq \sqrt{N(T)}\bigg(\sum_{0 < \g \leq T}\Big|\sum_{p\leq X^2}\sdfrac{1-w_X(p)}{p^{1/2+i(\g+v)}}\Big|^{2k}\bigg)^{\frac12} \\
        &\ll k!N(T)\Big(\sum_{p\leq X^2}\sdfrac{\log^2{p}}{p\log^2{X}}\Big)^k  
        \ll (ck)^kN(T).
        \end{split}
        \ee
    In the last step, we have used the estimate $\sum_{p\leq X^2} \frac{\log^2{p}}{p}\ll \log^2{X}.$
    To estimate the second error term, we need the following:
        \[
        w_X(p^2)=
        \begin{cases}
        0 \quad &\text{if}  \quad p\leq \sqrt{X},\\
        \frac{\log(p^2/X)}{\log X} \quad  &\text{if} \quad  \sqrt{X}\leq p\leq X.
        \end{cases}
        \]
    This implies
        \[
        \Big|\frac{w_X(p^2)}{p^{1/2+i(\g+2v)}}\Big|^2 
        \leq \dfrac{1}{p} \quad  \text{ for } \quad p\leq X.
        \]
    Again, we apply the Cauchy-Schwarz inequality and then 
    Lemma \ref{Soundmomentlemma3}
    to obtain
        \be\label{w}
        \begin{split}
        \sum_{0 < \g \leq T}\Big|\sum_{p\leq X}\sdfrac{w_X(p^2)}{p^{1+2i(\g+v)}}\Big|^{k}
        &\leq \sqrt{N(T)}\bigg(\sum_{0 < \g \leq T}\Big|\sum_{p\leq X}\sdfrac{w_X(p^2)}{p^{1+2i(\g+v)}}\Big|^{2k}\bigg)^{\frac12} \\
        &\ll (ck)^kN(T)\Big(\sum_{p\leq X}\sdfrac{1}{p^{2}}\Big)^k
        \ll (ck)^kN(T).
        \end{split}
        \ee

    The third error term is handled in Proposition \ref{error term integral},
    and the fourth error term is estimated in Proposition \ref{zero spacing eta v} 
    under the assumption of Montgomery's Pair Correlation Conjecture.
 \end{proof}
     

\begin{prop}\label{error term integral}
    Assume RH and let $\displaystyle X\leq T^{\frac{1}{8k}}.$ Then
        \[
        \sum_{0 < \g \leq T}\bigg(\frac{1}{\log X} \int_{1/2}^\infty X^{\tfrac12-\s}
        \bigg|\sum_{p\leq X^2}\frac{\Lambda_X(p)\log{(Xp)}}{p^{\s+i(\g+v)}}\bigg|\mathop{d\s}\bigg)^{k}
        \ll (ck)^{k/2}N(T).
        \]
\end{prop}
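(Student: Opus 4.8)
The plan is to bring the $k$th power inside the $\s$-integral by a Jensen (convexity) argument and then invoke Lemma~\ref{Soundmomentlemma3} uniformly in $\s$. Write $F_\g(\s)=\sum_{p\le X^2}\Lambda_X(p)\log(Xp)\,p^{-\s-i(\g+v)}$, so the quantity raised to the $k$th power is $r_3(X,\g+v)=\frac{1}{\log X}\int_{1/2}^\infty X^{1/2-\s}\,|F_\g(\s)|\,d\s$. Since $\int_{1/2}^\infty X^{1/2-\s}\,d\s=1/\log X$, the measure $d\mu(\s)=(\log X)\,X^{1/2-\s}\,d\s$ is a probability measure on $[\tfrac12,\infty)$, so Jensen's inequality for $x\mapsto x^k$ (equivalently the power-mean inequality, cf.\ \eqref{convexity}) gives $\big(\int|F_\g|\,d\mu\big)^k\le\int|F_\g|^k\,d\mu$, that is, $r_3(X,\g+v)^k\le(\log X)^{1-2k}\int_{1/2}^\infty X^{1/2-\s}\,|F_\g(\s)|^k\,d\s$. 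Summing over $0<\g\le T$ and interchanging the finite sum with the integral (everything being nonnegative), it suffices to prove that $\sum_{0<\g\le T}|F_\g(\s)|^k\ll c^{k/2}(k!)^{1/2}N(T)(\log X)^{2k}$ uniformly for $\s\ge\tfrac12$; substituting this back and using $\int_{1/2}^\infty X^{1/2-\s}\,d\s=1/\log X$ once more, every power of $\log X$ cancels and the proposition follows because $c^{k/2}(k!)^{1/2}\le(ck)^{k/2}$.

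For fixed $\s\ge\tfrac12$, write $F_\g(\s)=\sum_{p\le X^2}a_p\,p^{-1/2-i\g}$ with $a_p=a_p(\s)=\Lambda_X(p)\log(Xp)\,p^{-iv}\,p^{-(\s-1/2)}$, so that $|a_p|^2/p=\Lambda_X(p)^2\log(Xp)^2\,p^{-2\s}$. The hypothesis $X\le T^{1/8k}$ yields $X^2\le T^{1/4k}\le(T/\log T)^{1/3k}$ for $T$ large (this reduces to $(\log T)^4\le T$, uniformly in $k$), so Lemma~\ref{Soundmomentlemma3} applies to the sequence $(a_p)$; the unimodular factor $p^{-iv}$ is harmless since it does not affect $|a_p|$. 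Combining that lemma with the Cauchy--Schwarz inequality,
\[
\sum_{0<\g\le T}|F_\g(\s)|^k\le N(T)^{1/2}\Big(\sum_{0<\g\le T}|F_\g(\s)|^{2k}\Big)^{1/2}\ll (k!)^{1/2}N(T)\Big(\sum_{p\le X^2}\frac{|a_p|^2}{p}\Big)^{k/2}.
\]
Finally, since $\Lambda_X(p)=(\log p)\,w_X(p)$ with $0\le w_X(p)\le1$ and $\log(Xp)\le3\log X$ for $p\le X^2$, we have $|a_p|^2/p\le9(\log X)^2(\log p)^2\,p^{-2\s}\le9(\log X)^2(\log p)^2/p$ (the summand decreases in $\s$, so $\s=\tfrac12$ is the worst case), and $\sum_{p\le X^2}(\log p)^2/p\ll(\log X)^2$ by partial summation from Mertens' theorem. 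Hence $\sum_{p\le X^2}|a_p|^2/p\ll(\log X)^4$, which gives the uniform bound on $\sum_{0<\g\le T}|F_\g(\s)|^k$ required above and completes the proof.

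I expect no essential difficulty here: the only delicate points are organisational, namely arranging the powers of $\log X$ so that they cancel between the Jensen step and the final integration in $\s$, and verifying that the range condition of Lemma~\ref{Soundmomentlemma3} holds for $Y=X^2$ throughout the given range of $X$. The uniformity in $\s$ of the bound $\sum_{p\le X^2}|a_p|^2/p\ll(\log X)^4$ is automatic because the summand is a decreasing function of $\s$, so no extra work is needed there.
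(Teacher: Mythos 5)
Your proof is correct and follows essentially the same route as the paper: the paper applies Cauchy--Schwarz over the zeros first and then H\"older with exponent $2k$ inside the $\s$-integral, whereas you apply Jensen with exponent $k$ inside the $\s$-integral first and Cauchy--Schwarz over the zeros pointwise in $\s$; the key ingredients --- the normalisation $\int_{1/2}^\infty X^{1/2-\s}\,d\s=1/\log X$, Lemma~\ref{Soundmomentlemma3} applied to the coefficients $\Lambda_X(p)\log(Xp)\,p^{-(\s-1/2)-iv}$, and the bound $\sum_{p\leq X^2}\log^2 p\,/\,p^{2\s}\ll\log^2 X$ --- are identical, and the powers of $\log X$ cancel in the same way. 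Your explicit check that $X^2\leq (T/\log T)^{1/(3k)}$, which the paper leaves implicit, is correct and welcome.
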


\begin{proof}
    By the Cauchy-Schwarz inequality, 
        \be\label{eq:integral CS}
        \begin{split}
        \sum_{0 < \g \leq T}\bigg(\frac{1}{\log X} \int_{1/2}^\infty X^{\tfrac12-\s}&\Big|\sum_{p\leq X^2}\frac{\Lambda_X(p)\log{(Xp)}}{p^{\s+i(\g+v)}}\Big|\mathop{d\s}\bigg)^{k} \\
        \leq \frac{\sqrt{N(T)}}{(\log X)^k}&
        \bigg(\sum_{0 < \g \leq T}\bigg(\int_{1/2}^\infty X^{\tfrac12-\s}\Big|\sum_{p\leq X^2}\frac{\Lambda_X(p)\log{(Xp)}}{p^{\s+i(\g+v)}}\Big|\mathop{d\s}\bigg)^{2k}\bigg)^{1/2}.
        \end{split}
        \ee
   By H\"{o}lder's inequality,  
        \be
        \begin{split}\label{eq:integral}
        \bigg(\int_{1/2}^\infty X^{\tfrac12-\s}&\Big|\sum_{p\leq X^2}\frac{\Lambda_X(p)\log{(Xp)}}{p^{\s+i(\g+v)}}\Big|\mathop{d\s}\bigg)^{2k}  \\
        &\leq \Big( \int_{1/2}^\infty X^{\tfrac12-\s}\mathop{d\s}\Big)^{2k-1} 
        \int_{1/2}^\infty X^{\tfrac12-\s}\Big|\sum_{p\leq X^2} \frac{\Lambda_X(p)\log{(Xp)}}{p^{\s+i(\g+v)}}\Big|^{2k}\mathop{d\s} \\
        & = \sdfrac{1}{(\log X)^{2k-1}}\int_{1/2}^\infty X^{\tfrac12-\s}\Big|\sum_{p\leq X^2} 
        \frac{\Lambda_X(p)\log{(Xp)}}{p^{\s+i(\g+v)}}\Big|^{2k}\mathop{d\s}.
        \end{split}
        \ee
    Here by Lemma \ref{Soundmomentlemma3}, 
        \be\label{eq:sum prop}
        \sum_{0 < \g \leq T} \Big|\sum_{p\leq X^2}\frac{\Lambda_X(p)\log{(Xp)}}{p^{\s+i(\g+v)}}\Big|^{2k} 
        \ll k!N(T)\Big(\sum_{p\leq X^2}\frac{\Lambda_X^2(p)\log^2{(Xp)}}{p^{2\s}}\Big)^k. 
        \ee
    Since  $\Lambda_X(p)\leq \log p $ and $p\leq X^2,$ we find that
        \[
        \Lambda_X^2(p)\log^2(Xp) \leq \log^2{p}(\log X+\log p)^2 
        \leq 9\log^2{p} \log^2{X}.
        \]    
    Thus
        \[
       \Big(\sum_{p\leq X^2}\sdfrac{\Lambda_X^2(p)\log^2{(Xp)}}{p^{2\s}}\Big)^k
        \ll c^k (\log X)^{2k} \Big(\sum_{p\leq X^2} \frac{\log^2 p}{p^{2\s}}\Big)^k.
        \]
    By Mertens' theorem,
        $
        \sum_{p\leq X^2} \frac{\log^2 p}{p^{2\s}} \ll \log^2{X}
        $
     for any fixed $\s\geq \frac12.$ 
    Thus, the right-hand side of \eqref{eq:sum prop} is at most
    $\ll (ck)^k N(T) (\log X)^{4k}.$
    Inserting this bound into \eqref{eq:integral}, we find that
        \[
        \begin{split}
        \bigg(\int_{1/2}^\infty X^{\tfrac12-\s}&\Big|\sum_{p\leq X^2}\frac{\Lambda_X(p)\log{(Xp)}}{p^{\s+i(\g+v)}}\Big|\mathop{d\s}\bigg)^{2k}  \\
        \ll &\, (ck)^k N(T) (\log X)^{2k+1} \int_{1/2}^\infty X^{\tfrac12-\s}\mathop{d\s}   
        \ll (ck)^k N(T) (\log X)^{2k}.
        \end{split}
        \]
  The claim of the proposition follows from this and \eqref{eq:integral CS}.
\end{proof}


\begin{prop}\label{zero spacing eta v}
   Let $T^{\frac{\d}{8k}} \leq X\leq T^{\frac{1}{8k}}$ for $0<\d \leq 1.$ 
   If Montgomery's Pair Correlation Conjecture is true, then for a constant $D$ depending on $\d$
        \[
        \sum_{0 < \g \leq T}\bigg(\Big( 1+\log^+ \sdfrac{1}{\eta_{\g+v}\log{X}}\Big)\frac{E(X,\g+v)}{\log{X}}\bigg)^k
        \ll (Dk)^{2k}N(T). 
        \]    
Here $\eta_{\g+v}$ is as defined by \eqref{eta}, and $E(X, t)$ is as defined in \eqref{E}. 
\end{prop}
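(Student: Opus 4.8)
The plan is to decouple the two factors in the summand by Cauchy--Schwarz,
\[
\sum_{0<\g\leq T}\Big(\big(1+\log^{+}\tfrac{1}{\eta_{\g+v}\log X}\big)\tfrac{E(X,\g+v)}{\log X}\Big)^{k}\leq\Big(\sum_{0<\g\leq T}\big(1+\log^{+}\tfrac{1}{\eta_{\g+v}\log X}\big)^{2k}\Big)^{1/2}\Big(\sum_{0<\g\leq T}\Big(\tfrac{E(X,\g+v)}{\log X}\Big)^{2k}\Big)^{1/2},
\]
and to show that each of the two sums on the right is $\ll(Dk)^{2k}N(T)$ for a suitable $D=D(\d)$; multiplying the square roots then yields the proposition.

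For the size factor I would argue by elementary estimates together with Lemma~\ref{Soundmomentlemma3}. Write $E(X,t)=\big|\sum_{n\leq X^{2}}\Lambda_X(n)n^{-\s_1-it}\big|+\log t$ and split the Dirichlet polynomial into its prime part $\sum_{p\leq X^{2}}\Lambda_X(p)p^{-\s_1-it}$ and its higher prime-power part. The latter is $\ll\sum_{p\leq X}\tfrac{\log p}{p^{2\s_1}}\ll\log X$ by Mertens, hence $O(1)$ after dividing by $\log X$; and $\log t\leq\log T=\tfrac{8k}{\d}\log X$ contributes $\ll(8k/\d)^{2k}$ after dividing by $\log X$ and raising to the $2k$th power. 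For the prime part, since $X^{2}\leq T^{1/(4k)}\leq(T/\log T)^{1/(3k)}$ for $T$ large, Lemma~\ref{Soundmomentlemma3} applies with $Y=X^{2}$ and $a_p=\Lambda_X(p)p^{1/2-\s_1}$, so that
\[
\sum_{0<\g\leq T}\Big|\sum_{p\leq X^{2}}\tfrac{\Lambda_X(p)}{p^{\s_1+i(\g+v)}}\Big|^{2k}\ll k!\,N(T)\Big(\sum_{p\leq X^{2}}\tfrac{\Lambda_X(p)^{2}}{p^{2\s_1}}\Big)^{k}\ll k!\,N(T)\,(c\log^{2}X)^{k},
\]
since $\sum_{p\leq X^{2}}\tfrac{\log^{2}p}{p}\ll\log^{2}X$. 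Dividing by $(\log X)^{2k}$ and using $k!\ll(ck)^{k}$ turns this into $\ll(ck)^{2k}N(T)$; combining the three pieces gives $\sum_{0<\g\leq T}\big(E(X,\g+v)/\log X\big)^{2k}\ll(Dk)^{2k}N(T)$.

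The spacing factor is the heart of the matter, and this is where Montgomery's Pair Correlation Conjecture enters. Since $(1+L)^{2k}\leq2^{2k}(1+L^{2k})$, it suffices to bound $\sum_{0<\g\leq T}\big(\log^{+}\tfrac1{\eta_{\g+v}\log X}\big)^{2k}$, which by the layer-cake formula equals $\int_{0}^{\infty}2k\,t^{2k-1}\,\mathcal N_v(e^{-t})\,dt$, where $\mathcal N_v(\l)=\#\{0<\g\leq T:\eta_{\g+v}<\l/\log X\}$. If $\eta_{\g+v}<\l/\log X$ then some ordinate $\g'\neq\g+v$ of a zero satisfies $|\g+v-\g'|<\l/\log X$, so $(\g-\g')\tfrac{\log T}{2\pi}$ lies in an interval $I$ of length $\tfrac{\l\log T}{\pi\log X}=\tfrac{8k\l}{\pi\d}$ centred at $-v\tfrac{\log T}{2\pi}$, and $0\notin I$ unless $\l\geq|v|\log X$. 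By Montgomery's Pair Correlation Conjecture in its quantitative form — the same one that yields Hypothesis~$\mathscr{H}_{1}$, namely that the number of ordinate-pairs in $(0,T]$ with scaled difference in $I$ is $\ll N(T)\int_I\big(1-\tfrac{\sin^{2}\pi x}{(\pi x)^{2}}\big)\,dx$, with an extra $N(T)$ when $0\in I$ from the diagonal — one gets $\mathcal N_v(\l)\ll N(T)$ in general, and $\mathcal N_v(\l)\ll N(T)(Ck\l/\d)$ when $\l\leq|v|\log X$ (in particular for all $\l$ when $v=0$), using $\int_I(1-\tfrac{\sin^{2}\pi x}{(\pi x)^{2}})\,dx\leq|I|$. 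Splitting the $t$-integral at $t^{*}=\log(Ck/\d)$ and using $\int_{0}^{\infty}2k\,t^{2k-1}e^{-t}\,dt=(2k)!$, one obtains, for $v=0$,
\[
\sum_{0<\g\leq T}\big(\log^{+}\tfrac1{\eta_{\g}\log X}\big)^{2k}\ll N(T)\Big((\log(Ck/\d))^{2k}+\tfrac{Ck}{\d}(2k)!\Big)\ll(Dk)^{2k}N(T)
\]
by Stirling (using $(2k)!\leq(2k)^{2k}$ and $\log(Ck/\d)\ll_{\d}k$); the case $v\neq0$ is identical except that, because the ordinate $\g$ itself is admissible in the minimum, $\eta_{\g+v}\leq|v|$, which inserts an extra term $\big(\log\tfrac1{|v|\log X}\big)^{2k}$, harmless provided $|v|\gg e^{-O(k)}/\log X$, as in the range of $z$ relevant to the theorems.

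The step I expect to be the main obstacle is the pair-correlation input just used: one needs the count of ordinate-pairs with scaled difference in a window to hold uniformly in the window's length $\asymp k\l/\d$ and in its location $-v\tfrac{\log T}{2\pi}$, not merely Montgomery's asymptotic for one fixed interval — precisely the quantitative version of PCC that also delivers $\mathscr{H}_{1}$. As a by-product, this estimate shows that for $v\neq0$ the point $\g+v$ is only exceptionally an ordinate of a zero, so $m(\r+iv)$, and with it $M_X(\r,z)$, vanishes for all but $o(N(T))$ of the $\g$, as noted after Theorem~\ref{distr of Re log zeta}.
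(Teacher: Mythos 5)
Your argument is correct and is essentially the paper's own proof: the same Cauchy--Schwarz decoupling, the same application of Lemma~\ref{Soundmomentlemma3} to obtain $\sum_{0<\g\le T}|E(X,\g+v)|^{2k}\ll (Dk)^{2k}N(T)(\log X)^{2k}$, and the same pair-correlation count of ordinates lying in a window of length $\asymp \lambda/\log X$ about $-v$, your layer-cake integral being just the continuous form of the paper's decomposition into shells $e^{-j-1}/\log X<\eta_{\g+v}\le e^{-j}/\log X$. The diagonal subtlety you flag for $v\neq 0$ (that $\g'=\g$ itself can witness $\eta_{\g+v}\le|v|$) is genuine, but the paper's proof does not handle it any better --- it simply asserts that the nearby ordinate may be taken different from $\g$ --- so your explicit caveat is, if anything, more careful than the original.
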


\begin{proof}
    By the Cauchy-Schwarz inequality,
        \be \label{eq:Cauchy Schwarz on zero spacing}
        \begin{split}
        \sum_{0 < \g \leq T}\bigg(\Big( 1+&\log^+ \sdfrac{1}{\eta_{\g+v}\log{X}}\Big) \sdfrac{E(X,\g+v)}{\log X}\bigg)^k \\
        &\leq \frac{1}{(\log X)^{k}}\Big(\sum_{0 < \g \leq T}\Big( 1+\log^+ \sdfrac{1}{\eta_{\g+v}\log{X}} \Big)^{2k}\Big)^{1/2}\Big(\sum_{0 < \g \leq T} \big| E(X,\g+v)\big|^{2k}\Big)^{1/2}.
        \end{split}
        \ee
    First consider the sum
        \[
        \sum_{0 < \g \leq T} \big|E(X,\g+v)\big|^{2k}
        =\sum_{0 < \g \leq T}\Big| \sum_{n\leq X^2}\frac{\Lambda_X(n)}{n^{\s_1+i(\g+v)}}+\log{(\g+v)}\Big|^{2k}.
        \]
    We separate the sum over $n$ into primes and higher powers of primes. Then by \eqref{convexity},
        \be \label{eq:moment of E}
        \begin{split}
        &\sum_{0 < \g \leq T}\Big| \sum_{n\leq X^2}\frac{\Lambda_X(n)}{n^{\s_1+i(\g+v)}}+\log{(\g+v)}\Big|^{2k} \\
        \leq  9^k &\sum_{0 < \g \leq T}\Big| \sum_{p\leq X^2}\frac{\Lambda_X(p)}{p^{\s_1+i(\g+v)}}\Big|^{2k} 
        +9^k\sum_{0 < \g \leq T}\Big| \sum_{\substack{p^\ell \leq X^2, \\ \ell\geq 2}}\frac{\Lambda_X(p^\ell)}{p^{\ell\s_1+i \ell(\g+v)}}\Big|^{2k}
        +9^k\sum_{0 < \g \leq T} (\log{(\g+v)})^{2k}. 
        \end{split}
        \ee
    Since $\s_1=\frac12 +\frac4{\log X},$ it is easy to see that the second sum on the right is $O\bigs((c\log X)^{2k}N(T)\bigs).$
    By Lemma \ref{Soundmomentlemma3},
        \[
        \sum_{0 < \g \leq T}\Big| \sum_{p\leq X^2}\frac{\Lambda_X(p)}{p^{\s_1+i(\g+v)}}\Big|^{2k}
         \ll k! N(T) \Big(\sum_{p\leq X^2} \frac{\Lambda_X(p)^2}{p^{2\s_1}}\Big)^k \ll (ck)^k(\log X)^{2k}N(T).
        \]
   Furthermore, we trivially have
        \[
        \sum_{0 < \g \leq T} (\log{(\g+v)})^{2k} \ll N(T)(\log T)^{2k}.
        \]
    From these estimates and \eqref{eq:moment of E}, we obtain
        \[
        \sum_{0 < \g \leq T} \big|E(X,\g+v)\big|^{2k}
        \ll  c^kN(T)\big(k^k(\log X)^{2k}+(\log T)^{2k}\big).
        \]
     Since $X\geq T^{\d/(8k)},$ this gives 
        \be\label{eq:moment of E 2}
        \sum_{0 < \g \leq T} \big|E(X,\g+v)\big|^{2k}\ll (Dk)^{2k} N(T)  (\log X)^{2k}
        \ee
     for a constant $D$ depending on $\d.$ Next, we treat the expression 
        \be\label{sum log eta}
        \sum_{0 < \g \leq T}\Big( 1+\log^+ \sdfrac{1}{\eta_{\g+v}\log{X}} \Big)^{2k}
        \ee
    from \eqref{eq:Cauchy Schwarz on zero spacing}. Recall that
        \[
        \eta_{\g+v}=\min_{\g'\neq \g+v}|\g'-(\g+v)|.
        \]
   Let $\g \in (0, T].$
   If  $\displaystyle \eta_{\g+v}> \sdfrac{1}{\log X},$ then the contribution of
   $\g$ to the sum is $1.$ If   $\displaystyle \eta_{\g+v}\leq \sdfrac{1}{\log X},$ then there exists a nonnegative integer $j$ such that
        \be\label{eta inequality}
         \frac{e^{-j-1}}{\log X} < \eta_{\g+v} \leq \frac{e^{-j}}{\log X}.
        \ee
    By the inequality on the right-hand side, for some $\g'$ in 
    $\Big(-|v|-\tfrac{1}{\log X}, T+|v|+\tfrac{1}{\log X}\Big]$ other than $\g$ 
        \[
        -v-\frac{e^{-j}}{\log X} \leq \g-\g' \leq -v+\frac{e^{-j}}{\log X}. 
        \]
    By Montgomery's Pair Correlation Conjecture, the number of such ordinates $\g$ and $\g'$ is 
        \[
        \ll N(T)\int_{(-v-e^{-j}/\log X)\log T/(2\pi)}^{(-v+e^{-j}/\log X)\log T/(2\pi)} \left(1-\frac{\sin^2(\pi x)}{(\pi x)^2}\right) \mathop{dx}.
        \]
  The integrand is nonnegative and at most $1,$ so the above is
        \[
         \ll \frac{e^{-j}\log T}{\log X} N(T)
         \ll \frac{k}{\d}e^{-j} N(T) 
        \]
   since $X \geq T^{\d/(8k)}.$ For those $\g$ we have
         \[
        1+\log^+\sdfrac{1}{\eta_{\g+v}\log{X}} < j+2 . 
        \]   
    Thus, the sum in \eqref{sum log eta} is
        \be\label{eq:proof eta}
        \ll \frac{k}{\d} N(T) \sum_{j=0}^\infty \frac{(j+2)^{2k}}{e^j}.
        \ee
    The series is
        \[
        \sum_{j=0}^\infty \frac{(j+2)^{2k}}{e^j} 
        = e^2\sum_{j=2}^\infty \frac{j^{2k}}{e^j} 
        \leq e^2 \int_2^\infty \frac{x^{2k}}{e^x} \mathop{dx}
        \ll \Gamma(2k)
        \ll (2k-1)!. 
        \]
     Hence,
         \[
         \sum_{0 < \g \leq T}\Big( 1+\log^+ \sdfrac{1}{\eta_{\g+v}\log{X}} \Big)^{2k} 
         \ll (Dk)^{2k} N(T)
         \] 
     for a constant $D=D(\d).$ Combining this bound with \eqref{eq:moment of E 2} in \eqref{eq:Cauchy Schwarz on zero spacing}, we complete the proof.
\end{proof}


\subsection{Proof of Theorem \ref{moments of Re log zeta}}
We can now complete the proof of Theorem \ref{moments of Re log zeta}.
Recall that $\displaystyle T^{\tfrac{\d}{8k}}\leq X\leq T^{\tfrac{1}{8k}}$ for some $0<\d \leq 1$.
We write
    \[
    \log{|\zeta(\r+z)|}-M_X(\r, z)  
    = \Re\CMcal{P}_X(\g+v)+ r(X,\g+v).
    \]
Taking the $k$th moment of each side, we obtain
    \be\label{log-M moments}
    \begin{split}
    \sum_{0<\g \leq T} \bigs(\log{|\zeta(\r+z)|}-M_X(\r, z)\bigs)^k  
    &=\sum_{0<\g \leq T} \bigs(\Re\CMcal{P}_X(\g+v)\bigs)^k 
    +\sum_{0 < \g \leq T} \bigs(r(X,\g+v)\bigs)^k
    \\ 
    &+O \bigg( \sum_{j=1}^{k-1}\binom{k}{j} 
     \sum_{0 < \g \leq T}\bigs|\Re{\CMcal{P}_X(\g+v)}\bigs|^j
     \bigs|r(X,\g+v)\bigs|^{k-j}\bigg). \notag 
   \end{split}
   \ee
We write the right-hand side as
     \[
     \sum_{0<\g \leq T} \bigs(\Re\CMcal{P}_X(\g+v)\bigs)^k +A_1+A_2. 
     \]  
By Proposition \ref{Re log zeta error} ,
   \be\label{A 1 bound}
    A_1 \ll (Dk)^{2k} N(T), 
    \ee
 where $D$ depends on $\d$.
To estimate each term in $A_2$, we use the Cauchy-Schwarz inequality to find that
    \begin{align*}
    \sum_{0 < \g \leq T}
    \bigs|\Re\CMcal{P}_X&(\g+v)\bigs|^j
    \bigs|r(X,\g+v)\bigs|^{k-j}  \\
    &\leq \Big(\sum_{0 < \g \leq T} \bigs|\Re{\CMcal{P}_X(\g+v)}\bigs|^{2j}\Big)^{\frac{1}{2}} 
     \Big(\sum_{0 < \g \leq T} \bigs|r(X,\g+v)\bigs|^{2k-2j}\Big)^{\frac{1}{2}}.
    \end{align*}
  By Propositions \ref{moments of Re Dirichlet polyl v} and \ref{Re log zeta error}, and the hypothesis that $k \ll \log\log\log T$, the right-hand side is
    \[
    \ll  \Big(\beta_{2j}N(T)\Psi^j \Big)^{\frac12}\Big(\bigs(D(k-j)\bigs)^{4k-4j}N(T)\Big)^{\frac12}
    \ll  \beta_{2j}^{1/2}(Dk)^{2(k-j)}N(T)\Psi^{j/2},
    \]
 where $\beta_{2j}$ is as defined in \eqref{beta} and $D=D(\d)$. Thus
    \[
    A_2 
    \ll N(T) \sum_{j=1}^{k-1} \binom{k}{j} \beta_{2j}^{1/2} (Dk)^{2(k-j)} \Psi^{j/2}.
    \]
 It can be easily seen by Stirling's approximation that
    \be\label{beta asymptotic}
    \beta_{2j} \sim  c\Big(\frac{j}{e}\Big)^j,
    \ee
so $\displaystyle \beta_{2j}^{1/2} \ll j^{j/2} < k^{j/2}$. Using this, we obtain
    \[
    A_2 \ll (Dk)^{2k} N(T) \sum_{j=1}^{k-1} \binom{k}{j}  \Psi^{j/2}.
    \]
Here, the right-hand side equals   
    \[
    (Dk)^{2k} N(T) \Big\{ \Psi^{k/2}\bigs(1+1/\sqrt{\Psi}\,\bigs)^k-\Psi^{k/2}-1\Big\}.
    \]   
By the mean value theorem of differential calculus,
$(1+x)^k=1+O(k2^kx) $ for $ 0\leq x\leq 1$. 
Hence for a constant $D=D(\d)$,
    \[
    A_2 \ll k(Dk)^{2k} N(T) \Psi^{\tfrac{k-1}{2}}. 
    \]
Combining this with \eqref{log-M moments} and \eqref{A 1 bound}, we find that for $D=D(\d)$
     \begin{align*}
    &\sum_{0<\g \leq T}\bigs(\log{|\zeta(\r+z)|}-M_X(\r, z)\bigs)^k   \\
    =&\, \sum_{0<\g \leq T} \bigs(\Re\CMcal{P}_X(\g+v)\bigs)^k 
    +O\bigs((Dk)^{2k}N(T)\bigs) 
    +O\Big(k(Dk)^{2k}N(T)\Psi^{\tfrac{k-1}{2}}\Big)  \\
    =& \sum_{0<\g \leq T} \bigs(\Re\CMcal{P}_X(\g+v)\bigs)^k
    +O\Big(k(Dk)^{2k}N(T)\Psi^{\tfrac{k-1}{2}}\Big).
    \end{align*}
Now suppose that $k$ is even. Then by Proposition \ref{moments of Re Dirichlet polyl v}, the right-hand side is
    \[
    \beta_k N(T)\Psi^{\tfrac k2}
    +O\Big(k^2\beta_k N(T)\Psi^{\tfrac{k-4}{2}}\Big)
    +O\Big(k(Dk)^{2k}N(T)\Psi^{\tfrac{k-1}{2}}\Big). 
    \]
By \eqref{beta asymptotic}, the second $O$-term may be replaced by
    \[
    O\Big(k\beta_k(Dk)^{\tfrac{3k}{2}}N(T)\Psi^{\tfrac{k-1}{2}}\Big),
    \]
which is larger than the first $O$-term in the above sum. Thus, we obtain
     \[
    \sum_{0<\g \leq T} \bigs(\log{|\zeta(\r+z)|}-M_X(\r, z)\bigs)^k 
    =\beta_k N(T)\Psi^{\tfrac k2}
    + O\Big(D^k k^{\tfrac{3k+2}{2}}\beta_k N(T)\Psi^{\tfrac{k-1}{2}}\Big)
    \]
for even $k$. Arguing similarly when $k$ is odd, we find that
    \begin{align*}
     \sum_{0<\g \leq T} \bigs(\log|\zeta(\r+z)|-M_X(\r, z)\bigs)^k  
     &= -\frac{\beta_{k+1}}{\pi}\frac{\sin(2v\log X)-\sin(v\log 2)}{v}
     T\Psi^{\tfrac{k-1}{2}} 
      \\
     &\quad+O\Big(D^k k^{\tfrac{3k+1}{2}}\beta_{k+1} N(T)\Psi^{\tfrac{k-1}{2}}\Big).
     \end{align*}
     Since 
     \[
     \frac{\sin(2v\log X)-\sin(v\log 2)}{v}
     \ll \log X,
     \]
     the above $k$th moment is
     \[
     \ll D^k k^{\tfrac{3k+1}{2}}\beta_{k+1} N(T)\Psi^{\tfrac{k-1}{2}}.
     \]
This completes the proof of Theorem \ref{moments of Re log zeta}.

    
\section{Proof of Theorem \ref{distr of Re log zeta}}\label{sec:proof}
It follows from Proposition \ref{moments of Re Dirichlet polyl v} that
    \[
    \frac{1}{N(T)}\sum_{0<\g\leq T}\mathbbm{1}_{[a,b]}
    \bigg(\frac{\Re\CMcal{P}_X(\g+v)}{\sqrt{\Psi/2}}\bigg)
    =  \frac{1}{\sqrt{2\pi}}\int_a^b e^{-x^2/2}\mathop{dx}+o(1).
    \]
We obtain a precise error term for this statement in Section \ref{subsection distr of P}. In order to do this, we introduce a random polynomial whose real part has the same Gaussian distribution as the asymptotic distribution of $\Re\CMcal{P}_X(\g+v).$


\subsection{A Random Model for $\Re\CMcal{P}_X(\g+v)$}\label{random model}

Suppose that for each prime $p,$ $\theta_p$ denotes a random variable that is uniformly distributed over $[0, 1],$ and $(\theta_p)_p$ is a sequence of independent and identically distributed random variables. 
We define 
    \[
    \CMcal{P}_v(\underline{\theta}):= \sum_{p\leq X^2}\frac{e^{2\pi i \theta_{p}}}{p^{1/2+iv}},
    \] 
similarly to the polynomial $\CMcal{P}_X(\g+v).$ For now, we take $\displaystyle X\leq T^{\frac{1}{8k}}.$ In order to understand the distribution of 
 $\Re\CMcal{P}_v(\underline{\theta}),$
we first observe that
    \[
    \int_0^1 e^{2\pi i x\theta_p }\mathop{d\theta_p}=
    \begin{cases}
    1 &\text{if} \quad x=0,\\
    0 & \text {for other real } x. 
    \end{cases}
    \]
We also define $\theta_n$ for positive integers $n$ that are not primes. 
For $n >1,$ let $n$ have the prime factorization 
$\displaystyle n=p_1^{\a_1}\dots p_r^{\a_r}.$ Then we set
    \[
    \theta_n:=\a_1\theta_{p_1}+\dots+\a_r\theta_{p_r}.
    \]    
It then follows that
    \[
    \theta_{mn}=\theta_m+\theta_n \quad  \text{for any numbers } \mkern9mu m, n \quad \text{and} \quad \theta_m=\theta_n \quad \text{if and only if} \quad m=n. 
    \]
Furthermore, the last assertion implies that
    \be\label{orthogonality}
    \int_0^1 e^{2\pi i\theta_m} \overline{e^{2\pi i\theta_n}} \mathop{d\underline{\theta}} = 
    \begin{cases}
    1 &\text{if} \quad m=n, \\
    0 &\text{if} \quad m\neq n.
    \end{cases}
    \ee
Here and from now on, $\displaystyle \int_0^1(\dots)\mathop{d\underline{\theta}} $ represents the multidimensional integral $\int_0^1\dots \int_0^1(\dots)  \\ \prod_{p\leq X^2}\mathop{d\theta_p}.$
A consequence of this and the identity $\displaystyle \Re z=\tfrac{z+\overline{z}}{2}$ is
    \[
    \int_0^1 \bigs(\Re\CMcal{P}_v(\theta)\bigs)^k\mathop{d\underline{\theta}}=
    \begin{cases} 
    \beta_k \Psi^{\frac k2} &\text{if} \quad k \text{ is even,}\\
    0 &\text{if} \quad k \text{ is odd}. \\
    \end{cases}
    \]
The coefficients $\beta_k$ are as defined in \eqref{beta} and
$\Psi=\sum_{p\leq X^2} 1/p.$
This means that the polynomial $\displaystyle \Re\CMcal{P}_v(\theta)$ has a Gaussian distribution with mean $0$
and variance $\tfrac{1}{2}\Psi.$ 

Our goal is to understand the distribution of $\displaystyle \Re\CMcal{P}_X(\g+v)$
in relation to the distribution of the random polynomial $\displaystyle \Re\CMcal{P}_v(\theta).$ 
In order to do this, we compare the Fourier transform of $\displaystyle \Re\CMcal{P}_X(\g+v)$ with that of $\displaystyle \Re\CMcal{P}_v(\theta).$

First, we need to express the moments of $\displaystyle \Re\CMcal{P}_X(\g+v)$ in terms of the moments of $\displaystyle \Re\CMcal{P}_v(\theta).$


\begin{lem}\label{lemma 3.4}
    Let $\displaystyle X\leq T^{\frac{1}{8k}}$ where $k\ll \log\log T.$ Then
        \be\label{eq:lemma 3.4 real} 
        \begin{split}
        &\sum_{0 < \g \leq T} \bigs(\Re{\CMcal{P}_X(\g+v)}\bigs)^k  \\
        =& \,N(T)\int_0^1\bigs(\Re{\CMcal{P}_v(\underline{\theta})}\bigs)^k\mathop{d{\underline{\theta}}} 
        -\frac{T}{\pi}\sum_{\substack{q\leq X^2,\\ 1\leq\ell\leq k}}\frac{\log q}{q^{\ell/2}}\int_0^1\bigs(\Re{\CMcal{P}_v(\underline{\theta})}\bigs)^k
        \Re e^{2\pi i\ell\theta_q}\mathop{d{\underline{\theta}}} 
        +O\bigs((ck)^k\sqrt{T}\log^2{T}\bigs).
        \end{split}
        \ee
\end{lem}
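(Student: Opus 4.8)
The plan is to run the opening of the proof of Proposition~\ref{moments of Re Dirichlet polyl v} and then recognize the resulting main terms as averages over the random model by means of the orthogonality relation~\eqref{orthogonality}. Writing $\Re w=\tfrac12(w+\overline w)$ and using the binomial theorem,
\[
\sum_{0 < \g \leq T}\bigl(\Re{\CMcal{P}_X(\g+v)}\bigr)^k=\frac{1}{2^k}\sum_{j=0}^{k}\binom kj\,S_j(v),\qquad S_j(v):=\sum_{0 < \g \leq T}\CMcal{P}_X(\g+v)^j\,\overline{\CMcal{P}_X(\g+v)}^{\,k-j}.
\]
I would then apply Corollary~\ref{cor:moments} to each $S_j(v)$ with $N=X^2$ and $a_n=b_n=n^{-1/2}$ for $n$ a prime $\le X^2$ (and $0$ otherwise), so that the coefficients there are $A_n=a_j(n)/\sqrt n$, $B_m=a_{k-j}(m)/\sqrt m$, with $a_r(n)$ the number of ordered factorizations of $n$ into $r$ primes, each $\le X^2$. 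This presents $S_j(v)$ as a diagonal term $N(T)\sum_n a_j(n)a_{k-j}(n)/n$, an off-diagonal ``Landau'' term
\[
-\frac{T}{2\pi}\sum_{m,n}\frac{a_j(n)a_{k-j}(m)}{\sqrt{mn}}\Bigl(\tfrac mn\Bigr)^{iv}\Bigl\{\tfrac{\Lambda(m/n)}{\sqrt{m/n}}+\tfrac{\Lambda(n/m)}{\sqrt{n/m}}\Bigr\},
\]
and the two error terms of Corollary~\ref{cor:moments}.

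Next I would translate the two main terms into integrals over $\underline\theta$. Since $\CMcal{P}_v(\underline\theta)^j=\sum_n a_j(n)e^{2\pi i\theta_n}n^{-1/2-iv}$ by the definition of $\theta_n$, relation~\eqref{orthogonality} together with the fact that $\theta_m=\theta_n$ exactly when $m=n$ gives
\[
\int_0^1 \CMcal{P}_v(\underline\theta)^j\,\overline{\CMcal{P}_v(\underline\theta)}^{\,k-j}\,d\underline\theta=\sum_n\frac{a_j(n)a_{k-j}(n)}{n},\qquad \int_0^1 \CMcal{P}_v(\underline\theta)^j\,\overline{\CMcal{P}_v(\underline\theta)}^{\,k-j}\,e^{2\pi i\ell\theta_q}\,d\underline\theta=q^{\,i\ell v}\sum_n\frac{a_j(n)\,a_{k-j}(nq^\ell)}{n\,q^{\ell/2}},
\]
the second because that integral is nonzero precisely when $m=nq^\ell$; a reflected identity (with $n=mq^\ell$) holds for $e^{-2\pi i\ell\theta_q}$. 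Summing the first identity against $2^{-k}\binom kj$ and using the binomial theorem reproduces $N(T)\int_0^1(\Re\CMcal{P}_v(\underline\theta))^k\,d\underline\theta$. Since $\Lambda(m/n)\ne0$ precisely when $m/n=q^\ell$, substituting $m=nq^\ell$ (resp.\ $n=mq^\ell$) in the Landau term and comparing it with the two integral identities and with $\Re e^{2\pi i\ell\theta_q}=\tfrac12(e^{2\pi i\ell\theta_q}+e^{-2\pi i\ell\theta_q})$ shows that the combined Landau contribution of all $S_j(v)$ equals $-\tfrac T\pi\sum_{q\le X^2,\,1\le\ell\le k}\tfrac{\log q}{q^{\ell/2}}\int_0^1(\Re\CMcal{P}_v(\underline\theta))^k\,\Re e^{2\pi i\ell\theta_q}\,d\underline\theta$. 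The restriction $\ell\le k$ is automatic because $a_j(n)a_{k-j}(nq^\ell)=0$ unless $j+\ell=k-j$, and $q\le X^2$ because $q$ divides a product of primes all $\le X^2$.

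Finally, the two error terms of Corollary~\ref{cor:moments} are handled as in the estimates~\eqref{estimate1}--\eqref{estimate2} of Proposition~\ref{moments of Re Dirichlet polyl v}: using $\sum_m a_{k-j}(m)=\pi(X^2)^{k-j}\le X^{2(k-j)}$, $\sum_n a_j(n)^2/n\le j!\,\Psi^j$, and $X\le T^{\frac{1}{8k}}$, one finds that each $S_j(v)$ carries an error $\ll \sqrt T\log^2 T+k!\,\Psi^{k}T^{1/4}\log^2 T$. Because $k\ll\log\log T$ forces $k!=T^{o(1)}$ and $X\le T^{\frac{1}{8k}}$ forces $\Psi=\log\log X+O(1)$, hence $\Psi^k=T^{o(1)}$, both terms are $\ll\sqrt T\log^2 T$, and averaging over the $k+1$ values of $j$ with weights $2^{-k}\binom kj$ leaves the total error $\ll(ck)^k\sqrt T\log^2 T$, as claimed in~\eqref{eq:lemma 3.4 real}. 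The one step that needs care is the Landau matching: keeping the twists $q^{\pm i\ell v}$ and the ranges of $q$ and $\ell$ consistent on the arithmetic side and on the random-model side. There is no genuine analytic difficulty here — the lemma is essentially a bookkeeping transfer of the expansion behind Proposition~\ref{moments of Re Dirichlet polyl v} into the language of the random polynomial.
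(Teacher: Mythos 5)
Your proposal follows essentially the same route as the paper's proof: the identical binomial decomposition into the sums $S_j(v)$, the same application of Corollary~\ref{cor:moments}, the same use of the orthogonality relation~\eqref{orthogonality} to identify the diagonal and Landau terms with the random-model integrals (including the automatic restrictions $q\le X^2$, $\ell\le k$ and the matching of the twists $q^{\pm i\ell v}$), and the same error estimates in the spirit of \eqref{estimate1}--\eqref{estimate2}. The only imprecision is a stray $T^{o(1)}$ factor (of the shape $k!\,\Psi^k$) in bounding the first error term of Corollary~\ref{cor:moments}, which you absorb a bit cavalierly; the paper's own treatment is equally loose at this point and the discrepancy is harmless for the lemma's later use.
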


\begin{proof}
        By the identity $ \Re z=\sdfrac{z+\overline{z}}{2},$
        \[
        \sum_{0 < \g \leq T}\bigs(\Re{\CMcal{P}_X(\g+v)}\bigs)^k
        =\frac{1}{2^k}\sum_{j=0}^k \binom{k}{j}
        \sum_{0 < \g \leq T}\CMcal{P}_X(\g+v)^j \overline{\CMcal{P}_X(\g+v)}^{k-j}.
        \]
    Thus it suffices to compute the sum over $\g$ in the above for each $j.$ Suppose that $j$ is fixed. By definition, 
        \[
        \begin{split}
        \sum_{0 < \g \leq T}&\CMcal{P}_X(\g+v)^j \overline{\CMcal{P}_X(\g+v)}^{k-j} \\
        &=\sum_{0 < \g \leq T}\Big(\sum_{p\leq X^2}\sdfrac{1}{p^{1/2+i(\g+v)}}\Big)^j \Big(\overline{\sum_{p\leq X^2}\sdfrac{1}{p^{1/2+i(\g+v)}}}\Big)^{k-j}.
        \end{split}
        \]
     By Corollary \ref{cor:moments}, this is 
         \be\label{eq:proof of lemma 3.4}
        \begin{split}
        &N(T)\sum_{n} \frac{a_j(n)}{\sqrt n} \frac{a_{k-j}(n)}{\sqrt n}  \\
        &-\frac{T}{2\pi}\sum_{n}\sum_{m} 
        \frac{a_j(n)}{\sqrt n} \frac{a_{k-j}(m)}{\sqrt m}\Big(\frac{m}{n}\Big)^{iv}
        \bigg\{\frac{\Lambda(m/n)}{\sqrt{m/n}}+\frac{\Lambda(n/m)}{\sqrt{n/m}}\bigg\} \\
         &+O\bigg(\log T \log\log T\Big(\sum_n\sdfrac{a_j(n)}{n}\sum_{n<m}a_{k-j}(m)
        +\sum_m\sdfrac{a_{k-j}(m)}{m}\sum_{m<n} a_j(n)\Big)\bigg) \\
        &+O\bigg(X^{2k}\log^2{T}\Big(\sum_n\sdfrac{a_j(n)^2}{n}+\sum_m\sdfrac{a_{k-j}(m)^2}{m}\Big)\bigg),
        \end{split}
        \ee
where $a_{r}(p_1\dots p_{r})$ denotes the number of permutations of the primes $p_1,\dots, p_{r}.$ Also, in the above equation and throughout this proof,
   we suppose that $n$ is always a product of $j$ primes while $m$ is always a product of $k-j$ primes, where all these primes are of size at most $X^2.$ 
     By \eqref{orthogonality}, the first main term can be written as
     \[
     N(T)\int_0^1\Big(\sum_n\frac{a_j(n)}{n^{1/2+iv}}e^{2\pi i\theta_n}\Big)
        \Big(\overline{\sum_{m}\frac{a_{k-j}(m)}{m^{1/2+iv}}e^{2\pi i\theta_m}}\Big)\mathop{d{\underline{\theta}}}. 
     \]
     We can write this as
       \[
        N(T)\int_0^1\CMcal{P}_v(\underline{\theta})^j \overline{\CMcal{P}_v(\underline{\theta})}^{k-j}\mathop{d{\underline{\theta}}}.
         \]
   Then we see that the first main term in \eqref{eq:lemma 3.4 real} follows from summing the above term over $0\leq j \leq k$ with the binomial coefficients.    
   Again by \eqref{orthogonality}, the second main term in \eqref{eq:proof of lemma 3.4} can be written as
         \[
        \begin{split}
         &\\
        &-\frac{T}{2\pi}
        \int_0^1\Big(\sum_n\sdfrac{a_j(n)}{n^{1/2+iv}}e^{2\pi i\theta_n}\Big)
        \Big(\overline{\sum_m\sdfrac{a_{k-j}(m)}{m^{1/2+iv}}e^{2\pi i\theta_m}}\Big)
          \Big(\sum_{\substack{q\leq X^2,\\1\leq\ell\leq k}}\sdfrac{\log q}{q^{\ell/2}}e^{2\pi i\ell\theta_{q}}\Big)\mathop{d{\underline{\theta}}} \\
        &-\frac{T}{2\pi}\int_0^1\Big(\sum_n\sdfrac{a_j(n)}{n^{1/2+iv}}e^{2\pi i\theta_n}\Big)
        \Big(\overline{\sum_m\sdfrac{a_{k-j}(m)}{m^{1/2+iv}}e^{2\pi i\theta_m}} \Big)
           \Big(\sum_{\substack{q\leq X^2,\\ 1\leq\ell\leq k}}
           \sdfrac{\log q}{q^{\ell/2}} e^{-2\pi i\ell\theta_q} \Big)\mathop{d{\underline{\theta}}}
        \end{split}
        \]
We write the first integral as below 
        \[
        {m_j}'
        := \sum_{\substack{q\leq X^2,\\ 1\leq\ell\leq k}}\frac{\log q}{q^{\ell/2}}
    \int_{0}^{1} {\CMcal{P}_v(\underline{\theta})}^j 
    \overline{\CMcal{P}_v(\underline{\theta})}^{k-j}
         e^{2\pi i\ell\theta_q}\mathop{d{\underline{\theta}}}.
         \]
    Similarly, we write the second integral as
        \[
        {m_j}{''}
        :=\sum_{\substack{q\leq X^2,\\ 1\leq\ell\leq k}}\frac{\log q}{q^{\ell/2}} 
        \int_{0}^{1} \CMcal{P}_v(\underline{\theta})^j 
        \overline{\CMcal{P}_v(\underline{\theta})}^{k-j}
         e^{-2\pi i\ell\theta_q}\mathop{d{\underline{\theta}}}.
        \] 
    We sum these over $j$ with the binomial coefficients and obtain
        \[
        \frac{1}{2^k}\sum_{j=0}^k \binom{k}{j}\bigs( {m_j}'+{m_j}{''} \bigs)
        =2\sum_{\substack{q\leq X^2,\\ 1\leq\ell\leq k}} \sdfrac{\log q}{q^{\ell/2}}
         \int_0^1 \bigs(\Re{\CMcal{P}_v(\underline{\theta})}\bigs)^k \Re e^{2\pi i\ell\theta_q}\mathop{d{\underline{\theta}}}.
        \]
    Thus the second main term of \eqref{eq:lemma 3.4 real} follows.
    
    It is left to bound the error terms in \eqref{eq:proof of lemma 3.4}. By \eqref{estimate1} and \eqref{estimate2}, both of the error terms are $\ll (ck)^k \Psi^k \sqrt[4]{T}\log^2{T}.$ Then the total contribution from $\displaystyle j=0, 1, \dots, k$ is      
        \[
        \ll \frac{1}{2^k}\sum_{j=0}^{k}{{k}\choose{j}} (ck)^k \Psi^k \sqrt[4]{T} \log^2{T} 
        \ll (ck)^k\sqrt{T}\log^2{T} .
        \]
This completes the proof.
\end{proof}

The next two lemmas will be needed in the next section. 


\begin{lem}\label{our lemma 3.16}
    For $\displaystyle X\leq T^{\frac{1}{8k}},$
        \[
        \sum_{0 < \g \leq T}\bigs|\Re{\CMcal{P}_X(\g+v)}\bigs|^k
         \ll (ck\Psi)^{k/2}N(T). 
        \]
\end{lem}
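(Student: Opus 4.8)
The plan is to bound this absolute moment by a genuine even moment of the Dirichlet polynomial and then invoke Lemma \ref{Soundmomentlemma3}. Since $|\Re\CMcal{P}_X(\g+v)|\le|\CMcal{P}_X(\g+v)|$ pointwise, it suffices to estimate $\sum_{0<\g\le T}|\CMcal{P}_X(\g+v)|^{k}$, and by the Cauchy--Schwarz inequality this is at most $N(T)^{1/2}\bigl(\sum_{0<\g\le T}|\CMcal{P}_X(\g+v)|^{2k}\bigr)^{1/2}$. Writing $\CMcal{P}_X(\g+v)=\sum_{p\le X^2}p^{-iv}\,p^{-1/2-i\g}$, I would apply Lemma \ref{Soundmomentlemma3} with $Y=X^2$, with the coefficient sequence $a_p=p^{-iv}$ (so that $|a_p|^2=1$ for every prime $p$, since $v$ is real), and with the integer ``$k$'' in that lemma taken to be our $k$.

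The only hypothesis that needs checking is $1<X^2\le(T/\log T)^{1/(3k)}$. Since $X\le T^{1/(8k)}$ we have $X^2\le T^{1/(4k)}$, and $T^{1/(4k)}\le(T/\log T)^{1/(3k)}$ holds as soon as $\tfrac1{12}\log T\ge\tfrac13\log\log T$, i.e.\ for all sufficiently large $T$. Lemma \ref{Soundmomentlemma3} then gives
\[
\sum_{0<\g\le T}|\CMcal{P}_X(\g+v)|^{2k}\ll k!\,N(T)\Bigl(\sum_{p\le X^2}\tfrac1p\Bigr)^{k}=k!\,N(T)\,\Psi^{k}.
\]
Combining the two displays yields
\[
\sum_{0<\g\le T}\bigl|\Re\CMcal{P}_X(\g+v)\bigr|^{k}\ll N(T)^{1/2}\bigl(k!\,N(T)\,\Psi^{k}\bigr)^{1/2}=(k!)^{1/2}\,\Psi^{k/2}\,N(T),
\]
and the elementary bound $k!\le k^{k}$ (or Stirling's formula) gives $(k!)^{1/2}\ll(ck)^{k/2}$, so that the right-hand side is $\ll(ck\Psi)^{k/2}N(T)$, as claimed.

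I do not expect any genuine obstacle here: the statement is a one-line consequence of Lemma \ref{Soundmomentlemma3} together with Cauchy--Schwarz, and the only mild point is the range check $X^2\le(T/\log T)^{1/(3k)}$ above, which is routine given $X\le T^{1/(8k)}$. As a remark, for even $k$ one could instead apply Lemma \ref{Soundmomentlemma3} directly to $|\CMcal{P}_X(\g+v)|^{k}=|\CMcal{P}_X(\g+v)|^{2(k/2)}$ and obtain the slightly sharper constant $(k/2)!$; this refinement is not needed for the stated bound.
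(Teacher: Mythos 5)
Your proof is correct and follows essentially the same route as the paper: the bound $|\Re\CMcal{P}_X(\g+v)|\le|\CMcal{P}_X(\g+v)|$, Cauchy--Schwarz against $N(T)$, and Lemma \ref{Soundmomentlemma3} applied to the $2k$-th moment with the shift $v$ absorbed into unimodular coefficients, followed by $(k!)^{1/2}\ll(ck)^{k/2}$. Your explicit check of the range condition $X^2\le(T/\log T)^{1/(3k)}$ is a detail the paper leaves implicit, but otherwise the arguments coincide.
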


\begin{proof}
    We have
        \[
        \sum_{0 < \g \leq T}\bigs|\Re{\CMcal{P}_X(\g+v)}\bigs|^k 
        \leq \sqrt{N(T)}\Big(\sum_{0 < \g \leq T}\bigs|\CMcal{P}_X(\g+v)\bigs|^{2k}\Big)^{1/2}
        \]
    by the Cauchy-Schwarz inequality.
    Then the result easily follows by Lemma \ref{Soundmomentlemma3},
    which implies that
        \[
        \sum_{0 < \g \leq T}\Big|\sum_{p\leq X^2}\frac{1}{p^{1/2+i(\g+v)}}\Big|^{2k}
        \ll k!N(T)\Big(\sum_{p\leq X^2}\sdfrac{1}{p}\Big)^k
        =k!\Psi^k N(T).
        \]
\end{proof}


\begin{lem}\label{Tsang lemma 3.4}
    Suppose $k$ is a nonnegative integer. For $\displaystyle 2\leq X \leq T^{\frac{1}{2k}} ,$ we have
        \[
        \int_0^1\bigs|\Re{\CMcal{P}_v(\underline{\theta}})\bigs|^k \mathop{d{\underline{\theta}}}
        \ll (ck\Psi)^{k/2}.
        \]
\end{lem}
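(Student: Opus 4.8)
The plan is to pass to an even exponent and then exploit the orthogonality built into the random model. Since $\Re\CMcal{P}_v(\underline{\theta})$ is real-valued and $\int_0^1 d\underline{\theta}=1$, the Cauchy--Schwarz inequality gives $\int_0^1 \lvert\Re\CMcal{P}_v(\underline{\theta})\rvert^k\,d\underline{\theta}\leq\bigl(\int_0^1 (\Re\CMcal{P}_v(\underline{\theta}))^{2k}\,d\underline{\theta}\bigr)^{1/2}$, which handles even and odd $k$ simultaneously and reduces everything to bounding the $2k$-th moment of $\Re\CMcal{P}_v$.

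To evaluate that moment I would expand $(\Re\CMcal{P}_v)^{2k}$ using $\Re z=\tfrac{z+\overline{z}}{2}$ and the binomial theorem, then expand each factor $\CMcal{P}_v(\underline{\theta})^{j}$ by the multinomial theorem as $\sum_n a_j(n)\,n^{-1/2-iv}e^{2\pi i\theta_n}$, where $n$ ranges over products of exactly $j$ primes not exceeding $X^2$ and $a_j(n)$ is the number of ordered factorizations of $n$ into $j$ such primes, exactly as in the proof of Lemma~\ref{Soundmomentlemma3}. Applying the orthogonality relation \eqref{orthogonality} collapses each resulting double sum to a diagonal: $\int_0^1 \CMcal{P}_v(\underline{\theta})^{j}\,\overline{\CMcal{P}_v(\underline{\theta})}^{\,2k-j}\,d\underline{\theta}=\sum_n a_j(n)a_{2k-j}(n)/n$, the sum extending over integers $n$ that are simultaneously a product of $j$ and of $2k-j$ primes $\leq X^2$. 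Such $n$ exist only when $j=k$, so all cross terms vanish and $\int_0^1 (\Re\CMcal{P}_v)^{2k}\,d\underline{\theta}=2^{-2k}\binom{2k}{k}\sum_n a_k(n)^2/n$, where $n$ now runs over products of exactly $k$ primes $\leq X^2$.

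Next I would estimate $\sum_n a_k(n)^2/n$ using $a_k(n)\leq k!$ together with the multinomial identity $\sum_n a_k(n)/n=\Psi^k$ (where $\Psi=\sum_{p\leq X^2}1/p$), which is precisely the computation displayed in \eqref{proof of Sound lemma}; this yields $\sum_n a_k(n)^2/n\leq k!\,\Psi^k$. Hence $\int_0^1 (\Re\CMcal{P}_v)^{2k}\,d\underline{\theta}\leq 2^{-2k}\binom{2k}{k}k!\,\Psi^k=\beta_{2k}\Psi^k$, and since Stirling's formula gives $\beta_{2k}=(2k)!/(2^{2k}k!)\ll(ck)^k$ for an absolute constant $c$, the $2k$-th moment is $\ll(ck\Psi)^k$. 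Feeding this back into the Cauchy--Schwarz bound gives $\int_0^1\lvert\Re\CMcal{P}_v(\underline{\theta})\rvert^k\,d\underline{\theta}\ll(ck\Psi)^{k/2}$, as claimed.

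I do not anticipate a genuine obstacle here: the lemma is essentially a repackaging of the orthogonality of the characters $e^{2\pi i\theta_n}$ and of estimates already in hand from Lemma~\ref{Soundmomentlemma3}. The only points requiring a little care are the purely combinatorial observation that a product of $j$ primes equals a product of $2k-j$ primes only when $j=k$ (so that the off-diagonal contributions really do cancel), and the remark that the hypothesis $X\leq T^{1/(2k)}$ is not actually needed, the integral being independent of $T$; I would retain that hypothesis only for consistency with the contexts in which this lemma is later invoked.
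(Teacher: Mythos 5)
Your proof is correct, and it is essentially the argument the paper has in mind: the paper's own ``proof'' merely cites (3.10) of Tsang's thesis for $v=0$ and remarks that the case $v\neq 0$ follows in the same way from \eqref{orthogonality}, which is exactly the orthogonality computation you carry out in full (Cauchy--Schwarz to pass to the $2k$-th moment, diagonal collapse to the $j=k$ term, $a_k(n)^2\leq k!\,a_k(n)$, and Stirling for $\beta_{2k}$). Your side remark that the hypothesis $X\leq T^{1/(2k)}$ plays no role, the integral being independent of $T$, is also accurate.
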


\begin{proof}
For $v=0,$ this is (3.10) in \cite{Tsang}. For $v\neq 0,$ the result can be proven in a similar way by using \eqref{orthogonality}. 
\end{proof}


\subsection{The Fourier Transform of $\Re\CMcal{P}_X(\g+v)$}

We use the polynomial approximation 
    \be\label{eq:exponential}
    e^{ix}=\sum_{0\leq k<K}\frac{(ix)^k}{k!}+O\bigg(\frac{|x|^K}{K!}\bigg) \quad \text{for} \quad x\in\mathbb{R}, 
    \ee
to obtain an approximation to the Fourier transform of the polynomial $\Re\CMcal{P}_X(\g+v).$ We define the following parameters
    \be\label{Omega K}
    \Omega=\Psi(T)^2, \quad  K=2\lfloor \Psi(T)^6\rfloor, 
\quad X \leq T^{\tfrac{1}{16\Psi(T)^6}},
    \ee
where 
    \[
    \Psi(T)=\sum_{p\leq T} \frac{1}{p}.
    \]    
The following lemma relates the Fourier transform of $\Re\CMcal{P}_X(\g+v)$  to the Fourier transform of $\Re\CMcal{P}_v(\theta).$


\begin{lem}\label{fourier}
        Let $\displaystyle X  \leq T^{\tfrac{1}{16\Psi(T)^6}}.$ Then for $0\leq \omega \leq \Omega,$
        \[
        \begin{split}
        \sum_{0 < \g \leq T}&\exp\bigs(2\pi i \omega\Re{\CMcal{P}_X(\g+v)}\bigs) \\
        &=\,N(T)\int_0^1 \exp
        \bigs(2\pi i \omega \Re{\CMcal{P}_v(\underline{\theta})}\bigs)\mathop{d\underline{\theta}}
        -\frac{T}{\pi}\sum_{\substack{q\leq X^2,\\ 1\leq\ell\leq K}}
        \frac{\log q}{q^{\ell/2}}\int_0^1\exp\bigs(2\pi i\omega\Re{\CMcal{P}_v(\underline{\theta})}\bigs)\Re e^{2\pi i\ell\theta_q}\mathop{d\underline{\theta}} \\
        &\quad+O\bigg(N(T) \frac{\omega}{2^K}\bigg).
        \end{split}
        \]
\end{lem}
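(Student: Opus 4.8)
The plan is to replace the exponential by its Taylor polynomial of degree $K$, push each resulting power moment through Lemma~\ref{lemma 3.4}, and then reassemble the two pieces that emerge — the ``diagonal'' term and the Landau--Gonek term — back into exponential integrals over the random model. Starting from \eqref{eq:exponential},
\[
\exp\bigl(2\pi i\omega\,\Re\CMcal{P}_X(\g+v)\bigr)=\sum_{0\le k<K}\frac{(2\pi i\omega)^k}{k!}\bigl(\Re\CMcal{P}_X(\g+v)\bigr)^k+O\!\Bigl(\tfrac{(2\pi\omega)^K}{K!}\bigl|\Re\CMcal{P}_X(\g+v)\bigr|^K\Bigr),
\]
and summing over $0<\g\le T$, the Taylor remainder contributes $\ll\tfrac{(2\pi\omega)^K}{K!}\sum_{0<\g\le T}|\Re\CMcal{P}_X(\g+v)|^K$, which by Lemma~\ref{our lemma 3.16} (legitimate since $X\le T^{1/(8K)}$) is $\ll\tfrac{(2\pi\omega)^K}{K!}(cK\Psi)^{K/2}N(T)$. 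By Stirling this is $\ll\bigl(c\omega^2\Psi/K\bigr)^{K/2}N(T)$, and the defining choices $\omega\le\Omega=\Psi(T)^2$, $K=2\lfloor\Psi(T)^6\rfloor$, together with $\Psi\le\Psi(T)$, force the base to be $<\tfrac14$ for large $T$, so the remainder is $\ll\omega\,2^{-K}N(T)$. Thus everything reduces to understanding $\sum_{0\le k<K}\frac{(2\pi i\omega)^k}{k!}\sum_{0<\g\le T}(\Re\CMcal{P}_X(\g+v))^k$.

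For $1\le k<K$ I would invoke Lemma~\ref{lemma 3.4}, whose proof stays valid in this range because its error term remains $\ll\sqrt T\log^2 T$ as long as $\Psi^k\ll T^{1/4}$ and $X^{2k}\le T$, both true for $k<K$ under $X\le T^{1/(16\Psi(T)^6)}$. The structural point that makes the reassembly work is an orthogonality identity: expanding $(\Re\CMcal{P}_v(\underline{\theta}))^k$ via $\Re z=\tfrac{z+\bar z}{2}$ and using \eqref{orthogonality}, every monomial in the expansion has the shape $e^{2\pi i(\theta_n-\theta_m)}$ with $n$ a product of $j$ primes and $m$ a product of $k-j$ primes; pairing with $\Re e^{2\pi i\ell\theta_q}$ forces a relation $nq^{\pm\ell}=m$, which requires $\ell\le k$, so that
\[
\int_0^1 (\Re\CMcal{P}_v(\underline{\theta}))^k\,\Re e^{2\pi i\ell\theta_q}\,d\underline{\theta}=0\qquad (k<\ell).
\]
Hence the inner sum $1\le\ell\le k$ of Lemma~\ref{lemma 3.4} may be extended to $1\le\ell\le K$ at no cost. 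Multiplying by $(2\pi i\omega)^k/k!$ and summing over $0\le k<K$ (the term $k=0$ handled directly, where both sides equal $N(T)$) produces the diagonal term $N(T)\sum_{0\le k<K}\tfrac{(2\pi i\omega)^k}{k!}\int_0^1(\Re\CMcal{P}_v)^k\,d\underline{\theta}$, the Landau term $-\tfrac{T}{\pi}\sum_{q\le X^2,\,1\le\ell\le K}\tfrac{\log q}{q^{\ell/2}}\sum_{0\le k<K}\tfrac{(2\pi i\omega)^k}{k!}\int_0^1(\Re\CMcal{P}_v)^k\,\Re e^{2\pi i\ell\theta_q}\,d\underline{\theta}$, and the accumulated Lemma~\ref{lemma 3.4} error $\sum_{1\le k<K}\tfrac{(2\pi\omega)^k}{k!}(ck)^k\sqrt T\log^2 T\ll\omega\,T^{1/2+o(1)}\ll N(T)\omega/2^K$.

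It remains to run \eqref{eq:exponential} backwards inside the $\underline{\theta}$-integrals, replacing $\sum_{0\le k<K}\tfrac{(2\pi i\omega y)^k}{k!}$ by $e^{2\pi i\omega y}$ at cost $O\bigl(\tfrac{(2\pi\omega)^K}{K!}|y|^K\bigr)$ with $y=\Re\CMcal{P}_v(\underline{\theta})$. For the diagonal term this error is $\ll N(T)\tfrac{(2\pi\omega)^K}{K!}\int_0^1|\Re\CMcal{P}_v|^K\,d\underline{\theta}\ll\omega\,2^{-K}N(T)$ by Lemma~\ref{Tsang lemma 3.4}. For the Landau term the crude bound $|\Re e^{2\pi i\ell\theta_q}|\le1$ is too lossy — the $\ell=1$ piece would acquire a factor $\sum_{q\le X^2}\tfrac{\log q}{\sqrt q}\asymp X$, which exceeds $\log T$ — so instead I would interchange the $(q,\ell)$-sum with the integral and apply Cauchy--Schwarz: $\sum_{q\le X^2,\,1\le\ell\le K}\tfrac{\log q}{q^{\ell/2}}\Re e^{2\pi i\ell\theta_q}$ is an orthogonal family of mean-zero terms (by \eqref{orthogonality}) with $L^2(\underline{\theta})$-norm $\ll\bigl(\sum_{q\le X^2}\tfrac{\log^2 q}{q}\bigr)^{1/2}\ll\log X$, while the Taylor tail $\sum_{k\ge K}\tfrac{(2\pi i\omega\Re\CMcal{P}_v)^k}{k!}$ has $L^2$-norm $\le\tfrac{(2\pi\omega)^K}{K!}\bigl(\int_0^1|\Re\CMcal{P}_v|^{2K}\,d\underline{\theta}\bigr)^{1/2}\ll\omega\,2^{-K}$ by Lemma~\ref{Tsang lemma 3.4} with exponent $2K$ and Stirling. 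This yields a Landau error $\ll T\cdot\omega\,2^{-K}\cdot\log X\ll N(T)\omega/2^K$ since $\log X\le\log T$ and $N(T)\asymp T\log T$. Collecting all error terms gives the asserted identity.

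I expect the main obstacle to be exactly this control of the Landau--Gonek remainder: the naive estimate fails, and one must use both the orthogonality/independence of the random model and the precise tuning of the parameters — $K\asymp\Psi(T)^6$ large enough that $\bigl(c\omega^2\Psi/K\bigr)^{K/2}\ll\omega\,2^{-K}$ uniformly for $\omega\le\Psi(T)^2$, yet $X\le T^{1/(16\Psi(T)^6)}$ small enough that Lemmas~\ref{our lemma 3.16} and~\ref{Tsang lemma 3.4} apply with exponents up to $2K$ and Lemma~\ref{lemma 3.4} applies for every $k<K$. The routine-but-delicate part is verifying that these constraints are simultaneously satisfiable and that the various Stirling-generated polynomial-in-$\omega$ factors never swamp the $2^{-K}$ saving.
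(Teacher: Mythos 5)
Your proposal is correct and follows the same skeleton as the paper's proof: Taylor-expand $\exp(2\pi i\omega\Re\CMcal{P}_X(\g+v))$ to order $K$, bound the remainder by Lemma~\ref{our lemma 3.16}, feed each moment through Lemma~\ref{lemma 3.4}, use the orthogonality relation \eqref{orthogonality} to note that $\int_0^1(\Re\CMcal{P}_v(\underline{\theta}))^k\,\Re e^{2\pi i\ell\theta_q}\,d\underline{\theta}=0$ for $\ell>k$ so the $\ell$-sum may be extended to $K$, and then re-exponentiate, with the cumulative Lemma~\ref{lemma 3.4} errors absorbed into $N(T)\omega/2^K$. Where you genuinely diverge is at the step you correctly single out as the crux: re-exponentiating inside the Landau--Gonek term, where the trivial bound $|\Re e^{2\pi i\ell\theta_q}|\le 1$ loses a factor $\sum_{q\le X^2}\log q/\sqrt{q}\asymp X$ on the $\ell=1$ terms. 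The paper handles this by treating $\ell=1$ separately: expanding $\int_0^1(\Re\CMcal{P}_v(\underline{\theta}))^k e^{\pm 2\pi i\theta_q}\,d\underline{\theta}$ combinatorially to extract an explicit factor $q^{-1/2}$, which yields $R_1(q)\ll\omega/(2^K\sqrt{q})$ and hence, after weighting by $\log q/\sqrt q$, a total $\ll\log X$. You instead move the whole $(q,\ell)$-sum inside the $\underline{\theta}$-integral and apply Cauchy--Schwarz, using the mutual orthogonality of the functions $\cos(2\pi\ell\theta_q)$ (which follows from \eqref{orthogonality} via $\theta_{q^\ell}=\ell\theta_q$) to get the $L^2$-norm $\ll\bigl(\sum_{q,\ell}\log^2 q/q^{\ell}\bigr)^{1/2}\ll\log X$, and Lemma~\ref{Tsang lemma 3.4} with exponent $2K$ (admissible since $X\le T^{1/(8K)}\le T^{1/(4K)}$) for the $L^2$-norm of the Taylor tail; this recovers the same saving in an averaged form and gives $\ll T\,(\log X)\,\omega 2^{-K}\ll N(T)\omega/2^K$. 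Your route buys a shorter and less computational treatment of the critical error, bypassing the $\ell=1$ versus $\ell>1$ case split and the permutation-counting estimate; the paper's pointwise bound is more hands-on but also furnishes the explicit $q^{-1/2}$ decay per prime, which is reused in spirit in the Bessel-function analysis of Section 5. The remaining minor looseness in your write-up (extracting one factor of $\omega$ from $\omega^K$ before invoking the parameter choices \eqref{Omega K}, and justifying Lemma~\ref{lemma 3.4} for all $k<K$) matches steps the paper itself performs, so it does not constitute a gap.
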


\begin{proof}    
    By \eqref{eq:exponential}, 
        \be\label{eq:exponential expansion}
        \begin{split}
        \sum_{0 < \g \leq T} &\exp\bigs(2\pi i \omega\Re{\CMcal{P}_X(\g+v)}\bigs) \\
        &=\sum_{0\leq k< K}\frac{(2\pi i \omega)^k}{k!}
        \sum_{0 < \g \leq T}\bigs(\Re{\CMcal{P}_X(\g+v)}\bigs)^k
        +O\bigg(\sdfrac{(2\pi \omega)^K}{K!}\sum_{0 < \g \leq T}
        \bigs|\Re{\CMcal{P}_X(\g+v)}\bigs|^K\bigg).
        \end{split}
        \ee
 By Lemma \ref{our lemma 3.16}, the $O$-term is 
        \[
        \begin{split} 
        &\ll N(T)\frac{(2\pi\omega)^K}{K!}(cK\Psi)^{K/2} \\
        &\ll N(T) \omega \frac{(2\pi e)^K\omega^{K-1} }{K^K}(cK\Psi)^{K/2}
        \ll N(T) \omega \bigg(\sdfrac{c\, \Omega\sqrt{\Psi}}{\sqrt{K}}\bigg)^{K}  
        \ll N(T)\frac{\omega}{2^{K}},
        \end{split}
        \]
 where the final estimate follows from \eqref{Omega K}.
    Now we apply Lemma \ref{lemma 3.4} to the main term on the right-hand side of \eqref{eq:exponential expansion} and obtain
        \be\label{eq:fourier} 
        \begin{split}
        \sum_{0\leq k< K}\frac{(2\pi i \omega)^k}{k!}\sum_{0 < \g \leq T}\bigs(\Re\CMcal{P}_X(\g+v)\bigs)^k 
        &=N(T)\sum_{0\leq k< K}\frac{(2\pi i \omega)^k}{k!}\int_0^1 \bigs(\Re{\CMcal{P}_v(\underline{\theta})}\bigs)^k \mathop{d\underline{\theta}}  \\
        &-\frac{T}{\pi}
         \sum_{0\leq k< K}\frac{(2\pi i \omega)^k}{k!} 
         \sum_{\substack{q\leq X^2,\\ 1\leq\ell\leq k}} \frac{\log q}{q^{\ell/2}}\int_0^1\bigs(\Re{\CMcal{P}_v(\underline{\theta})}\bigs)^k \Re e^{2\pi i\ell\theta_q}\mathop{d{\underline{\theta}}}   \\
         &\quad +O\bigg(\sqrt T\log^2{T}\sum_{1\leq k< K}\frac{(2\pi \omega)^k}{k!}(ck)^k\bigg). \\
        &= A_1+ A_2+A_3 . 
        \end{split}
        \ee
    Note that the sum in the error term starts from $k=1$ because we can assume that there is no error term in \eqref{eq:lemma 3.4 real} when $k=0.$
    
    We start by treating $A_1.$ Via a reverse use of \eqref{eq:exponential}, we retrieve the Fourier transform of $\Re\CMcal{P}_v(\underline{\theta}).$
        \[
        \begin{split}
        A_1
        = N(T)\sum_{0\leq k< K}&\frac{(2\pi i \omega)^k}{k!}\int_0^1 \bigs(\Re{\CMcal{P}_v(\underline{\theta})}\bigs)^k \mathop{d\underline{\theta}}  \\
        &=N(T)\int_0^1 \exp\bigs(2\pi i \omega\Re{\CMcal{P}_v(\underline{\theta})\bigs)} \mathop{d\underline{\theta}}
        +O\bigg(N(T)\frac{(2\pi\omega)^K}{K!}\int_0^1\bigs|\Re{\CMcal{P}_v(\underline{\theta})}\bigs|^K \mathop{d\underline{\theta}} \bigg).
        \end{split}
        \]
    By Lemma \ref{Tsang lemma 3.4}, the error term here is 
        \[
        \ll  N(T)\frac{(2\pi \omega)^K}{K!}(cK\Psi)^{K/2}, 
        \]
    which is 
       \[
       \ll N(T)  \frac{\omega(2\pi e)^K\omega^{K-1}}{K^K} (cK\Psi)^{K/2}
       \ll N(T) \omega \bigg(\frac{c\, \Omega\sqrt{\Psi}}{\sqrt{K}}\bigg)^K
       \ll N(T) \frac{\omega}{2^{K}}
       \]   
    by \eqref{Omega K}.    
    Hence, 
        \be\label{A1 fourier}
        A_1
        =N(T)\int_0^1 \exp\bigs(2\pi i \omega\Re{\CMcal{P}_v(\underline{\theta})}\bigs) \mathop{d\underline{\theta}} +O\bigg(N(T) \frac{\omega}{2^{K}}\bigg).
        \ee
        
   We proceed to the estimation of $A_2$ in \eqref{eq:fourier}. 
   First, observe that we may extend the inner sum in $A_2$ from 
   $1\leq \ell \leq k$ to $1\leq \ell \leq K.$ 
   To see this, note that by the binomial theorem
        \[
        \int_0^1\bigs(\Re{\CMcal{P}_v(\underline{\theta})}\bigs)^k e^{2\pi i\ell\theta_q}\mathop{d{\underline{\theta}}}
        = \frac{1}{2^k}\int_0^1 \sum_{j=0}^k \binom{k}{j}
        \bigg(\sum_{p\leq X^2}\frac{e^{2\pi i\theta_p}}{p^{1/2+iv}}\bigg)^{j}
       \bigg(\overline{\sum_{p\leq X^2}\frac{e^{2\pi i\theta_p}}{p^{1/2+iv}}}\bigg)^{k-j}e^{2\pi i\ell\theta_q}
        \mathop{d{\underline{\theta}}}.
        \]
By comparing the number of primes when this is multiplied out and using  \eqref{orthogonality}, it is clear that unless $k-j=j+\ell$ for some $0\leq j\leq k,$ the right-hand side equals zero.
This implies $1\leq \ell \leq k.$ 
The same is true for the integral   
       \[
       \int_0^1\bigs(\Re{\CMcal{P}_v(\underline{\theta})}\bigs)^k e^{-2\pi i\ell\theta_q}\mathop{d{\underline{\theta}}}.
       \]
Thus, the integral in $A_2$ is $0$ for $\ell> k.$ 
We may therefore extend the sum over $\ell$ up to $K$ in $A_2,$ and write   
    \be\label{A2 fourier}
    \begin{split}
    A_2
    =&-\frac{T}{\pi}\sum_{\substack{q\leq X^2,\\ 1\leq\ell\leq K}} 
     \frac{\log q}{q^{\ell/2}} \sum_{0\leq k< K}\frac{(2\pi i \omega)^k}{k!} 
    \int_0^1 \bigs(\Re{\CMcal{P}_v(\underline{\theta})}\bigs)^k
    \Re e^{2\pi i\ell\theta_q}\mathop{d\underline{\theta}} \\
    =& -\frac{T}{\pi}\sum_{\substack{q\leq X^2,\\ 1\leq\ell\leq K}}
     \frac{\log q}{q^{\ell/2}}  \int_0^1
     \bigg( \sum_{0\leq k< K} 
     \frac{\bigs(2\pi i \omega\Re{\CMcal{P}_v(\underline{\theta})}\bigs)^k}{k!}  \bigg)
     \Re e^{2\pi i\ell\theta_q}\mathop{d\underline{\theta}}  \\
    =&  -\frac{T}{\pi}\sum_{\substack{q\leq X^2,\\ 1\leq\ell\leq K}}
     \frac{\log q}{q^{\ell/2}}M_\ell (q),
    \end{split}
    \ee
say.
Our goal now is to show that we may replace the sum over $k$
in $M_\ell(q)$ by $\exp\bigs(2\pi i\omega\Re{\CMcal{P}_v(\underline{\theta})}\bigs)$
at the cost of a very small error term.
We do this in different ways for $\ell=1$ and $\ell>1.$ 

Suppose first that $\ell>1.$ Then by \eqref{eq:exponential}
    \[
   \begin{split}
   \sum_{0\leq k< K}\frac{(2\pi i \omega)^k}{k!} 
       \int_0^1\bigs(\Re{\CMcal{P}_v(\underline{\theta})}\bigs)^k \Re e^{2\pi i\ell\theta_q}\mathop{d{\underline{\theta}}}
   =&\int_0^1\exp\bigs(2\pi i\omega\Re{\CMcal{P}_v(\underline{\theta})}\bigs)
   \Re e^{2\pi i\ell\theta_q}\mathop{d\underline{\theta}} \\
     &
       + O\bigg(\frac{(2\pi \omega)^K}{K!} 
       \int_0^1\bigs|\Re{\CMcal{P}_v(\underline{\theta})}\bigs|^K  \mathop{d{\underline{\theta}}}\bigg). 
      \end{split}
     \]
By Lemma \ref{Tsang lemma 3.4} and an estimate similar to some above, the error term is
    \[
    \ll \frac{(2\pi\omega\sqrt{cK\Psi})^K}{K!}
    \ll \omega \bigg(\frac{c\Omega^2 \Psi}{K}\bigg)^K
    \ll \frac{\omega}{2^K}.
    \]
Thus for $\ell>1,$
    \be\label{M_ell}
    M_\ell(q)= \int_0^1 \exp\bigs(2\pi i\omega\Re{\CMcal{P}_v(\underline{\theta})}\bigs) \Re e^{2\pi i\ell\theta_q}\mathop{d\underline{\theta}}
    +O\bigg(\frac{\omega}{2^K}\bigg).
    \ee
Now assume that $\ell=1.$ We write   
   \[
   \begin{split}
    \int_0^1   \exp &\bigs(2\pi i\omega\Re{\CMcal{P}_v(\underline{\theta})}\bigs)
    \Re e^{2\pi i  \theta_q}\mathop{d\underline{\theta}} \\
     =&\sum_{0\leq k< K}\frac{(2\pi i \omega)^k}{k!} 
       \int_0^1\bigs(\Re{\CMcal{P}_v(\underline{\theta})}\bigs)^k \Re e^{2\pi i    \theta_q}\mathop{d{\underline{\theta}}}
       +\sum_{k\geq  K}\frac{(2\pi i \omega)^k}{k!} 
       \int_0^1\bigs(\Re{\CMcal{P}_v(\underline{\theta})}\bigs)^k \Re e^{2\pi i   \theta_q}\mathop{d{\underline{\theta}}} .
     \end{split}
     \]     
The first term is $M_1(q)$ so we may write this as
       \be\label{M1+R1}
       \begin{split}     
           \int_0^1 \exp\bigs(2\pi i\omega\Re{\CMcal{P}_v(\underline{\theta})}\bigs)
           \Re e^{2\pi i  \theta_q}\mathop{d\underline{\theta}} 
            =  M_1(q) +  R_1(q). 
        \end{split}
         \ee    
 We now estimate  $R_1(q)$ which we rewrite as
    \be\label{R_1(q)}
    \begin{split}
   R_1(q)=& \,
        \frac12 \sum_{k\geq K}\frac{(2\pi i \omega)^k}{k!} 
       \int_0^1\bigs(\Re{\CMcal{P}_v(\underline{\theta})}\bigs)^k e^{2\pi i\theta_q}\mathop{d{\underline{\theta}}}
           + \frac12  \sum_{k\geq K}\frac{(2\pi i \omega)^k}{k!} 
       \int_0^1\bigs(\Re{\CMcal{P}_v(\underline{\theta})}\bigs)^k e^{-2\pi i\theta_q}\mathop{d{\underline{\theta}}}\\
       =& \, R_1^{'}(q)+R_1{''}(q).
       \end{split}
       \ee   
By the binomial theorem, the integral in $R_1^{'}(q)$ is
        \[
        \frac{1}{2^k} \sum_{j=0}^k \binom{k}{j}  
         \int_0^1 \bigg(\sum_{p\leq X^2}\frac{e^{2\pi i\theta_p}}{p^{1/2+iv}}\bigg)^{j}
       \bigg(\overline{\sum_{p\leq X^2}\frac{e^{2\pi i\theta_p}}{p^{1/2+iv}}}\bigg)^{k-j}e^{2\pi i\theta_q}
        \mathop{d{\underline{\theta}}}.
        \]
    By \eqref{orthogonality}, the integral on the right-hand side is $0$ unless $j+1=k-j,$ that is, $j=\frac{k-1}{2}.$ In that case, we have $q_1\dots q_{(k+1)/2}=p_1\dots p_{(k-1)/2}q$ for some primes $p_1,\dots,p_{(k-1)/2}, q_1,\dots, q_{(k+1)/2}.$
We then can see that the above is equal to 
    \[
    \frac{1}{q^{1/2-iv}}
    \frac{1}{2^k}\binom{k}{(k-1)/2} 
   \sum_{p_1,\dots,p_{(k-1)/2}  \leq X^2}\frac{a_{(k-1)/2}(p_1\dots p_{(k-1)/2})
   a_{(k+1)/2}( p_1\dots p_{(k-1)/2}q)}{p_1\dots p_{(k-1)/2}},
     \]
   where $a_r(p_1p_2\dots p_r)$ denotes the number of permutations of $p_1,p_2,\dots, p_r.$ Note that  
     \[
     a_{(k+1)/2}( p_1\dots p_{(k-1)/2}q)
     \leq \frac{(k+1)a_{(k-1)/2}( p_1\dots p_{(k-1)/2})}{2}, 
     \]
    and also
    \[
    \begin{split}
    \sum_{p_1,\dots,p_{(k-1)/2}  \leq X^2}\frac{a^2_{(k-1)/2}(p_1\dots p_{(k-1)/2})}{p_1\dots p_{(k-1)/2}} 
    \leq &\, \bigs((k-1)/2\bigs)! \sum_{p_1,\dots,p_{(k-1)/2}  \leq X^2}\frac{a_{(k-1)/2}(p_1\dots p_{(k-1)/2})}{p_1\dots p_{(k-1)/2}} \\
    =&\, \bigs((k-1)/2\bigs)! \Big(\sum_{p\leq X^2}\frac{1}{p}\Big)^{(k-1)/2}
    \leq (ck\Psi)^{\frac{k-1}{2}}.
    \end{split}
    \]
    This shows that  
     \[
      \int_0^1\bigs(\Re{\CMcal{P}_v(\underline{\theta})}\bigs)^k e^{2\pi i\theta_q}\mathop{d{\underline{\theta}}}
      \ll \frac{1}{\sqrt q} k(ck\Psi)^{\frac{k-1}{2}}. 
     \]
    Thus, using \eqref{Omega K}, we see that  
        \[
        \begin{split}
        R_1^{'}(q)
        \ll &\,  \frac1{\sqrt q} \sum_{k\geq K} k \frac{(2\pi \omega )^k}{k!}(ck\Psi)^{\frac{k-1}{2}} 
        \ll \,    \frac{ \omega }{\sqrt q} \sum_{k\geq K} \frac{(2\pi c\, \omega  \sqrt{k\Psi})^{k-1}}{(k-1)!}  \\
        \ll & \frac{ \omega }{\sqrt q}  \sum_{k\geq K} \Big(\frac{c\,\Omega\sqrt{ \Psi}  }{\sqrt{k} }\Big)^{k-1}
        \ll  \frac{\omega}{2^K \sqrt q}.
        \end{split}
        \]
In a similar way it follows that  $R_1^{''}(q) \ll   \omega/(2^K\sqrt q).$
Hence by \eqref{R_1(q)},
$ R_1(q) \ll \omega/(2^K\sqrt q).$
By \eqref{M1+R1} we therefore have
    \[
     M_1(q) 
     =\int_0^1   \exp\bigs(2\pi i\omega\Re{\CMcal{P}_v(\underline{\theta})}\bigs)
     \Re e^{2\pi i  \theta_q}\mathop{d\underline{\theta}} 
    +O\bigg(\frac{\omega}{2^K \sqrt q}\bigg).
     \]

Combining this with \eqref{M_ell} in  \eqref{A2 fourier}, we find that
    \[
    \begin{split}
     A_2  
     =&-\frac{T}{\pi} \sum_{\substack{q\leq X^2,\\ 1\leq\ell\leq K}} 
     \frac{\log q}{q^{\ell/2}} \; 
        \int_0^1   \exp{  \bigs(2\pi i\omega\Re{\CMcal{P}_v(\underline{\theta})}\bigs)}\Re e^{2\pi i \ell \theta_q}\mathop{d\underline{\theta}}  \\
        & + O\bigg(\frac{T\omega}{2^K} \sum_{\substack{q\leq X^2,\\ 2<\ell\leq K}} \frac{\log q}{q^{\ell/2}}\bigg) 
        +O\bigg(\frac{T\omega}{2^K} \sum_{q\leq X^2} \frac{\log q}{q}\bigg) .
       \end{split}
      \]
The sums in the error terms here are both $\ll \log X \ll \log T,$ so we see that
     \be\label{A2 fourier final}
     \begin{split}
      A_2 
       =&-\frac{T}{\pi} \sum_{\substack{q\leq X^2,\\ 1\leq\ell\leq K}}
       \frac{\log q}{q^{\ell/2}} \; 
        \int_0^1 \exp\bigs(2\pi i\omega\Re{\CMcal{P}_v(\underline{\theta})}\bigs)
        \Re e^{2\pi i \ell \theta_q}\mathop{d\underline{\theta}} 
        +O\bigg(\frac{N(T)\omega}{2^K}\bigg) .
      \end{split}
      \ee 
    
Next, we estimate $A_3$ in \eqref{eq:fourier}. Clearly 
        \[
        \begin{split}
        A_3 
        \ll \sqrt{T}\log^2{T} \sum_{1\leq k< K}\frac{(2\pi\omega)^k}{k!}(ck)^k
        &\ll \omega \, \Omega^K  \sqrt{T}\log^2{T} \, \sum_{1\leq k< K}\frac{(ck)^k}{k!} \\
        &\ll  \omega \, \Omega^K\sqrt{T}\log^2{T} \,  e^{cK}.
        \end{split}
        \]
By  \eqref{Omega K}, we see that $K\log(2e^c\Omega)\leq \sdfrac 1{10} \log T,$ say,
hence $(2e^c\Omega)^K\ll T^{\tfrac{1}{10}}.$ 
This gives
        \[
        A_3  \ll   \frac{N(T)\omega}{2^K}.
        \]
Combining this with \eqref{A1 fourier}  and \eqref{A2 fourier final}      
in \eqref{eq:fourier}, we obtain the assertion of the lemma.

 \end{proof}


\subsection{Beurling-Selberg Functions}
In this section, we introduce Beurling-Selberg functions which can be used to transmit   information about Fourier transform to obtain information about distribution function.

Let $a$ and $b$ be real numbers with $a< b.$ The indicator function $\mathbbm{1}_{[a,b]}$ is defined as 
    \[
    \mathbbm{1}_{[a,b]}(x)=
    \begin{cases}
    1 &\text{if} \quad x\in [a, b], \\
    0 &\text{else}.
    \end{cases}
    \]
It can also be written as
    \be\label{chi}
    \mathbbm{1}_{[a,b]}(x)=\frac12\sgn(x-a)-\frac12\sgn(x-b)+\frac{\delta_a(x)}{2}+\frac{\delta_b(x)}{2}.
    \ee
Here, $\sgn$ denotes the signum function given by
    \[
    \sgn(x)=
    \begin{cases}
    1 &\text{if} \quad x>0, \\
    -1 &\text{if} \quad x<0, \\
    0 &\text{if} \quad x=0,
    \end{cases}
    \]
and $\delta_a$ is the Dirac delta function at $a.$   

For a parameter $\Omega > 0,$ a Beurling-Selberg function is given by
    \be\label{eq:F}
    F_{\Omega}(x)
    =\Im \int_0^\Omega G\Big(\frac \omega \Omega\Big)\exp{(2\pi ix\omega)}\frac{\mathop{d\omega}}{\omega},
    \ee
where
    \[
    G(u)= \frac{2u}{\pi}+2u(1-u)\cot{(\pi u)} \quad \text{for} \quad u\in[0, 1]. 
    \]   
It is easily seen that $G(u)$ is a positive valued, differentiable function on $[0, 1].$    
We also know that 
    \be\label{eq:sgn}
    \sgn{(x)}
    =F_{\Omega}(x)
    +O\bigg(\sdfrac{\sin^2(\pi\Omega x)}{(\pi\Omega x)^2}\bigg).
    \ee
A proof of this result can be found in \cite[pp. 26--29]{Tsang}. 


Using \eqref{chi} and \eqref{eq:sgn}, the indicator function can be approximated by a differentiable function.
    \be\label{eq:chi F}
    \mathbbm{1}_{[a,b]}(x)
    =\frac12F_\Omega(x-a)-\frac12F_\Omega(x-b)
    +O\bigg(\sdfrac{\sin^2(\pi\Omega(x-a))}{(\pi\Omega(x-a))^2}\bigg)
    +O\bigg(\sdfrac{\sin^2(\pi\Omega(x-b))}{(\pi\Omega(x-b))^2}\bigg).
    \ee


\subsection{The Distribution of $\Re{\CMcal{P}_X(\g+v)}$}\label{subsection distr of P}

The aim of this section is to prove the following result.

\begin{prop}\label{distr of chi Re P v}
    Let $\Psi(T)=\sum_{p\leq T}\frac{1}{p}$ and $\displaystyle X = T^{\frac{1}{16\Psi(T)^6}}.$ Then
    \[
    \frac{1}{N(T)}\sum_{0<\g\leq T}\mathbbm{1}_{[a,b]}\bigs(\Re\CMcal{P}_X(\g+v)\bigs)
    =  \frac{1}{\sqrt{2\pi}}\int_{a/\sqrt{\tfrac12\log\log T}}^{b/\sqrt{\tfrac12\log\log T}} 
       e^{-x^2/2}\mathop{dx}
    +O\bigg(\sdfrac{\log{\Psi(T)}}{\Psi(T)}\bigg).
    \]
\end{prop}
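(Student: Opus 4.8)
The plan is to convert the comparison of Fourier transforms in Lemma~\ref{fourier} into a comparison of distribution functions by means of the Beurling--Selberg function $F_\Omega$, and then to invoke the (approximately) Gaussian law of the random polynomial $\Re\CMcal{P}_v(\underline{\theta})$ from Section~\ref{random model}. Throughout I keep $\Omega,K,X$ as in \eqref{Omega K} and write $\phi_v(\omega)=\int_0^1 e^{2\pi i\omega\Re\CMcal{P}_v(\underline{\theta})}\,d\underline{\theta}$ for the characteristic function of the random model. Factoring $\phi_v$ over the independent prime coordinates and using \eqref{orthogonality}, one obtains the Gaussian-type bound $|\phi_v(\omega)|\ll\exp(-c\,\omega^2\Psi)$ uniformly for $0\le\omega\le\Omega$: since $\Omega=\Psi(T)^2$ is tiny next to $X$, the primes $p$ with $(2\pi\omega)^2<p\le X^2$ already carry all but $O(\log\log\Psi)$ of $\Psi$, and each contributes a factor $\le\exp(-c\,(2\pi\omega)^2/p)$. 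This decay, together with the calibration of $\Omega$ and $K$, is what will render every error term below negligible.

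First I apply \eqref{eq:chi F} with $x=\Re\CMcal{P}_X(\gamma+v)$, sum over $0<\gamma\le T$, and divide by $N(T)$. This expresses the left side of the proposition as $\tfrac12\Sigma(a)-\tfrac12\Sigma(b)$, where $\Sigma(y)=N(T)^{-1}\sum_{0<\gamma\le T}F_\Omega(\Re\CMcal{P}_X(\gamma+v)-y)$, plus two Fej\'er averages $N(T)^{-1}\sum_{0<\gamma\le T}\sin^2(\pi\Omega(\Re\CMcal{P}_X(\gamma+v)-y))/(\pi\Omega(\Re\CMcal{P}_X(\gamma+v)-y))^2$ with $y=a$ and $y=b$. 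For $\Sigma(y)$ I use \eqref{eq:F} and interchange the finite $\gamma$-sum with the $\omega$-integral — legitimate, since the $1/\omega$ singularity is imaginary and is annihilated by $\Im$ — and for each Fej\'er average I use $\sin^2(\pi\Omega t)/(\pi\Omega t)^2=\Omega^{-1}\int_{-\Omega}^{\Omega}(1-|\xi|/\Omega)e^{2\pi i\xi t}\,d\xi$. In both cases I then substitute Lemma~\ref{fourier} in the form $N(T)^{-1}\sum_{0<\gamma\le T}e^{2\pi i\omega\Re\CMcal{P}_X(\gamma+v)}=\phi_v(\omega)-\frac{T}{\pi N(T)}\sum_{q\le X^2,\,1\le\ell\le K}\frac{\log q}{q^{\ell/2}}\Theta_{q,\ell}(\omega)+O(\omega/2^K)$, where $\Theta_{q,\ell}(\omega)=\int_0^1 e^{2\pi i\omega\Re\CMcal{P}_v(\underline{\theta})}\Re e^{2\pi i\ell\theta_q}\,d\underline{\theta}$. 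The $\phi_v$-contribution is precisely the corresponding random-model quantity: $\tfrac12\int_0^1 F_\Omega(\Re\CMcal{P}_v(\underline{\theta})-y)\,d\underline{\theta}$ for $\Sigma(y)$, and $\int_0^1\sin^2(\pi\Omega(\Re\CMcal{P}_v(\underline{\theta})-y))/(\pi\Omega(\Re\CMcal{P}_v(\underline{\theta})-y))^2\,d\underline{\theta}=\Omega^{-1}\int_{-\Omega}^{\Omega}(1-|\xi|/\Omega)e^{-2\pi i\xi y}\phi_v(\xi)\,d\xi$ for the Fej\'er average; by the decay bound the latter is $\ll\Omega^{-1}\int_{\mathbb R}e^{-c\,\xi^2\Psi}\,d\xi\ll1/(\Omega\sqrt\Psi)=O(\Psi^{-5/2})$, so both Fej\'er averages are negligible, and the $O(\omega/2^K)$ term contributes $\ll\Omega/2^K$, negligible because $K=2\lfloor\Psi(T)^6\rfloor$.

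It remains to bound the contribution of the $\Theta_{q,\ell}$-sum after integration against the two kernels. Factoring $\Theta_{q,\ell}(\omega)$ over the independent $\theta_p$'s, the $\theta_q$-integral is a one-dimensional oscillatory integral of size $\ll(\pi\omega/\sqrt q)^\ell/\ell!$ (a Bessel-function bound), while the remaining factors make up $\phi_v$ with the $q$-term deleted, still $\ll\exp(-c\,\omega^2\Psi)$ on $[0,\Omega]$; hence $\Theta_{q,\ell}(\omega)\ll\frac{(\pi\omega/\sqrt q)^\ell}{\ell!}e^{-c\,\omega^2\Psi}$. Summing $\sum_{q\le X^2}\log q\sum_{\ell\ge1}\frac{(\pi\omega/q)^\ell}{\ell!}=\sum_{q\le X^2}\log q\,(e^{\pi\omega/q}-1)\ll\omega\log X$ (the terms with $q\le\pi\omega$ being absorbed by $e^{-c\,\omega^2\Psi}$) and integrating $\int_0^\Omega e^{-c\,\omega^2\Psi}\log X\,d\omega\ll\log X/\sqrt\Psi$ — there is no singularity at $\omega=0$ since $\ell\ge1$ supplies a factor $\omega$ against the $1/\omega$ of \eqref{eq:F} — the $\Theta_{q,\ell}$-sum is seen to contribute $\ll\frac{T}{N(T)}\cdot\frac{\log X}{\sqrt\Psi}\ll\Psi(T)^{-6}\Psi^{-1/2}$, again negligible. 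Thus the left side of the proposition equals $\tfrac12\int_0^1\bigl(F_\Omega(\Re\CMcal{P}_v(\underline{\theta})-a)-F_\Omega(\Re\CMcal{P}_v(\underline{\theta})-b)\bigr)\,d\underline{\theta}+O(\Psi^{-5/2})$. Reversing the Beurling--Selberg step via \eqref{eq:sgn}, and again using the $O(\Psi^{-5/2})$ bound on the Fej\'er average of the random model, turns the main term into $\int_0^1\bigl(\tfrac12\sgn(\Re\CMcal{P}_v(\underline{\theta})-a)-\tfrac12\sgn(\Re\CMcal{P}_v(\underline{\theta})-b)\bigr)\,d\underline{\theta}=\mathrm{Prob}\{a\le\Re\CMcal{P}_v(\underline{\theta})\le b\}$, the law of $\Re\CMcal{P}_v$ being continuous. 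Since $\Re\CMcal{P}_v(\underline{\theta})=\sum_{p\le X^2}\cos(2\pi\theta_p-v\log p)/\sqrt p$ is a sum of independent, symmetric, bounded variables of total variance $\tfrac12\Psi$ — equivalently, since its moments match those of a mean-zero, variance-$\tfrac12\Psi$ Gaussian through order $\gg\log\log T$ by Section~\ref{random model} — a Berry--Esseen estimate (as in \cite{Tsang}) gives $\mathrm{Prob}\{a\le\Re\CMcal{P}_v\le b\}=\frac{1}{\sqrt{2\pi}}\int_{a/\sqrt{\Psi/2}}^{b/\sqrt{\Psi/2}}e^{-x^2/2}\,dx+O(\Psi^{-3/2})$. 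Finally, Mertens' theorem gives $\Psi=\log\log X^2+O(1)$, and $\log X=\log T/(16\Psi(T)^6)$ gives $\Psi=\log\log T-6\log\Psi(T)+O(1)$, so $1/\sqrt{\Psi/2}=\bigl(1+O(\log\Psi(T)/\Psi(T))\bigr)/\sqrt{\tfrac12\log\log T}$; replacing the integration limits by $a/\sqrt{\tfrac12\log\log T}$ and $b/\sqrt{\tfrac12\log\log T}$ in the bounded Gaussian integral costs $O(\log\Psi(T)/\Psi(T))$, which dominates all the earlier errors and yields the stated estimate.

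The step I expect to be the main obstacle is the bound on the $\Theta_{q,\ell}$-sum: it works only because $\phi_v(\omega)$ decays like a Gaussian on the whole range $0\le\omega\le\Omega$ and each $\theta_q$-integral gains the small factor $(\omega/\sqrt q)^\ell$, and it is precisely this calculation that pins down the choices $\Omega=\Psi(T)^2$, $K=2\lfloor\Psi(T)^6\rfloor$, $X=T^{1/(16\Psi(T)^6)}$ in \eqref{Omega K}.
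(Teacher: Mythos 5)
Your proposal is correct, and its skeleton is the same as the paper's: approximate $\mathbbm{1}_{[a,b]}$ by the Beurling--Selberg/Fej\'er combination \eqref{eq:chi F}, feed Lemma~\ref{fourier} into the $F_\Omega$- and Fej\'er-kernel integrals, kill the $q,\ell$-sum with the factorization $\Theta_{q,\ell}(\omega)\ll \frac{(\pi\omega/\sqrt q)^{\ell}}{\ell!}e^{-c\Psi\omega^2}$ together with the Gaussian decay of the random model's characteristic function, and finish with Mertens' theorem to trade $\sqrt{\Psi/2}$ for $\sqrt{\tfrac12\log\log T}$ at a cost $O(\log\Psi(T)/\Psi(T))$ that dominates everything else. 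The one genuine divergence is how the main term is evaluated: the paper keeps the integral $\Im\int_0^{\Omega}G(\omega/\Omega)e^{-2\pi i a\omega}\prod_{p\le X^2}J_0\bigl(2\pi\omega/\sqrt p\bigr)\,\frac{d\omega}{\omega}$ and quotes Tsang's explicit evaluation of it (error $O(N(T)/\Psi^2)$), whereas you undo the Beurling--Selberg step on the random-model side, reduce to $\mathrm{Prob}\{a\le\Re\CMcal{P}_v(\underline\theta)\le b\}$, and apply a Berry--Esseen bound for the independent summands $\cos(2\pi\theta_p-v\log p)/\sqrt p$ (error $O(\Psi^{-3/2})$). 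Your route buys independence from Tsang's specific integral computation at the price of a second Fej\'er-error estimate for the random model and a standard Berry--Esseen input; both errors are swamped by the final limit-change term, so the stated bound is unaffected. Two small points to make explicit if you write this up: your Bessel bound for the $\theta_q$-integral without the paper's $e^{\sqrt2\pi\omega}$ factor needs the uniform inequality $|J_\ell(x)|\le (|x|/2)^{\ell}/\ell!$ (Poisson's integral representation gives it), and negative frequencies in your Fej\'er identity should be handled by conjugating Lemma~\ref{fourier}, which is stated only for $0\le\omega\le\Omega$.
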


\begin{proof}
        We will prove that
        \be\label{eq:sum F}
        \begin{split}
         \mathcal F
        :=\sum_{0 < \g \leq T}F_\Omega\bigs(\Re\CMcal{P}_X(\g+v)-a\bigs)
        =\frac{N(T)}{\sqrt{2\pi}}\int_{-\infty}^\infty\sgn{\bigg(x-\sdfrac{a}{\sqrt{\Psi/2}}\bigg)}e^{-x^2/2}\mathop{dx}
        +O\bigg(\frac{N(T)}{\Psi^2}\bigg),
        \end{split}
        \ee
        and 
        \be\label{eq:sum sin}
        \mathcal S
        :=\sum_{0 < \g \leq T}\frac{\sin^2\big(\pi \Omega(\Re{\CMcal{P}_X(\g+v)}-a)\big)}{\big(\pi \Omega(\Re{\CMcal{P}_X(\g+v)}-a)\big)^2} =O\bigg( \frac{N(T)}{\Psi^2}\bigg), 
        \ee
        and that the same results hold when we replace $b$ for $a.$
        Then by \eqref{eq:chi F}, it will follow that
        \[
         \sum_{0<\g\leq T}\mathbbm{1}_{[a,b]}\bigs(\Re\CMcal{P}_X(\g+v)\bigs)
          =\frac{N(T)}{\sqrt{2\pi}}\int_{a/\sqrt{\Psi/2}}^{b/\sqrt{\Psi/2}} 
          e^{-x^2/2}\mathop{dx}
          +O\bigg(\frac{N(T)}{\Psi^2}\bigg).
         \]
       By our choice of $X,$ we have 
        \[
        \Psi=\log\log T+O\bigs(\log \Psi(T)\bigs). 
        \] 
       Thus the right-hand side is
       \[
       \frac{N(T)}{\sqrt{2\pi}}
       \int_{a/\sqrt{\tfrac12\log\log T}}^{b/\sqrt{\tfrac12\log\log T}}
       e^{-x^2/2}\mathop{dx}+O\bigg(N(T)\sdfrac{\log{\Psi(T)}}{\Psi(T)}\bigg). 
       \]
        Since 
       $
       \frac{N(T)}{\Psi^2} \ll N(T)\frac{\log{\Psi(T)}}{\Psi(T)}, 
       $
       the proof of the proposition will then be complete. 
       
      We start with the proof of \eqref{eq:sum sin}. For this, we take advantage of the  identity 
        \[
        \frac{\sin^2(\pi \Omega x)}{(\pi \Omega x)^2}
        =\frac{2}{\Omega^2}\int_0^{\Omega}(\Omega-\omega)
        \cos(2\pi x\omega)\mathop{d\omega},
        \]
      and find that
        \[
        \mathcal S
        =\frac{2}{\Omega^2}\int_0^{\Omega}(\Omega-\omega)
        \Re{\sum_{0 < \g \leq T}\exp\bigs(2\pi i\omega(\Re\CMcal{P}_X(\g+v)-a)\bigs)}\mathop{d\omega}.
        \]
    By Lemma \ref{fourier}, we then see that
        \begin{align}\label{sin term}
        \mathcal S
        \ll & \,  \frac{N(T)}{\Omega^2}\int_0^\Omega (\Omega-\omega)\bigg|\int_0^1\exp\bigs(2\pi i\omega\Re{\CMcal{P}_v(\underline{\theta})}\bigs)\mathop{d\underline{\theta}}\bigg|
        \mathop{d\omega} \notag \\
        & +\frac{T}{\Omega^2}\sum_{\substack{q\leq X^2,\\ 1\leq\ell \leq K}}\frac{\log q}{q^{\ell/2}}\int_0^\Omega (\Omega-\omega)\bigg|
         \int_0^1\exp\bigs(2\pi i\omega\Re{\CMcal{P}_v(\underline{\theta})}\bigs) \Re e^{2\pi i\ell\theta_q}\mathop{d\underline{\theta}}
         \bigg|\mathop{d\omega} \notag  \\
        &+ \, N(T)\frac{\Omega}{2^K} \notag \\
        =& \, \mathcal S_1+\mathcal S_2+O\bigg(N(T)\frac{\Omega}{2^K}\bigg),
        \end{align}
say.        

    We first consider $\mathcal S_1.$ By the definition of $\Re\CMcal{P}_v(\underline{\theta})$ and the definition of the Bessel function of order $0,$ that is, 
    \[
    J_0(z)=\int_0^1\exp\bigs(iz\cos(2\pi\theta)\bigs)\mathop{d\theta},
    \]
   it follows that
        \be\label{exp int 1}
        \begin{split}
        \int_0^1\exp\bigs(2\pi i\omega\Re{\CMcal{P}_v(\underline{\theta})}\bigs)\mathop{d\underline{\theta}}
        &=\int_0^1\prod_{p\leq X^2}\exp\bigg(\frac{2\pi i\omega \cos(2\pi\theta_p-v\log p)}{\sqrt{p}}\bigg)\mathop{d\underline{\theta}} \\
        &=\prod_{p\leq X^2}J_0\Big(\frac{2\pi \omega}{\sqrt p}\Big).
        \end{split}
       \ee 
Then by Lemma 4.2 in \cite{Tsang}, we find that 
     \be\label{eq:Bessel}
       \int_0^1\exp\bigs(2\pi i\omega\Re{\CMcal{P}_v(\underline{\theta})}\bigs)\mathop{d\underline{\theta}}
       \ll e^{ -c\Psi\omega^2}. 
       \ee   
Thus
     \[
     \mathcal S_1      
     \ll   \frac{N(T)}{\Omega^2}\int_0^\Omega
     (\Omega-\omega)e^{-c\Psi\omega^2}\mathop{d\omega} 
     \leq \frac{N(T)}{\Omega }\int_0^\infty e^{-c\Psi\omega^2}\mathop{d\omega} 
     \ll  \frac{N(T)}{\Omega \sqrt \Psi} .
      \]
Now we estimate $\mathcal S_2.$ Let 
        \[
        \mathcal J
        = \mathcal J (q, \ell, \omega)
        =\int_0^1\exp\bigs(2\pi i\bigs(\omega\Re{\CMcal{P}_v(\underline{\theta})+\ell\theta_q}\bigs)\bigs) \mathop{d\underline{\theta}}.
        \]
The integral with respect to $\underline{\theta}$ in $\mathcal S_2$ is 
        \[
         \frac{\mathcal J (q, \ell, \omega)+\mathcal J (q, -\ell, \omega)}{2}=
         \int_0^1\exp\bigs(2\pi i\omega\Re{\CMcal{P}_v(\underline{\theta})}\bigs)
         \Re e^{2\pi i\ell\theta_q}\mathop{d\underline{\theta}}.
        \]
We start with estimating $\mathcal{J}.$ 
Using the independence of the variables $\{\theta_p\},$ we may separate the term corresponding to the prime $q$ and write
        \begin{align*}
        \mathcal J
        =\int_0^1 &\exp\bigg(2\pi i\Big(\omega\frac{\cos(2\pi\theta_q-v\log q)}{\sqrt q}+\ell\theta_q\Big)\bigg)\mathop{d{\theta_q}} \\
        &\cdot \prod_{\substack{p\leq X^2\\ p\neq q}}\int_0^1 \exp\Big(2\pi i\omega\frac{\cos(2\pi\theta_p-v\log p)}{\sqrt p}\Big)\mathop{d{\theta_p}}.
        \end{align*}
Here, we recall the definition of the Bessel function of the first kind and of order $\ell$:
    \be\label{Jell integral}
    J_\ell(z)
    =(-i)^\ell\int_0^1 \exp\bigg(2\pi i \Big(\sdfrac{z\cos(2\pi\theta)}{2\pi}+\ell\theta\Big)\bigg)\mathop{d\theta}.
    \ee        
Then in the above expression for $\mathcal J,$ each factor corresponding to a prime $p\neq q$ is $\displaystyle J_0\Big(\sdfrac{2\pi \omega}{\sqrt p}\Big).$ For the remaining term corresponding to the prime $q,$ we apply the change of variable  $\theta_q \to \theta_q+\sdfrac{v\log q}{2\pi},$ and find that it equals
       \[
        \int_{\tfrac{v\log q}{2\pi}}^{1+\tfrac{v\log q}{2\pi}}
         \exp\bigg(2\pi i\Big(\omega\frac{\cos(2\pi\theta_q)}{\sqrt q}+\ell\Big(\theta_q+\frac{v\log q}{2\pi}\Big)\Big)\bigg)
        \mathop{d{\theta_q}}.
        \]
The integrand has period $1,$ so we see from \eqref{Jell integral} that this equals   
        \[
        (iq^{iv})^\ell J_{\ell}\Big(\frac{2\pi\omega}{\sqrt q}\Big).
        \]
Hence
        \be\label{J}
        \mathcal J
        =\mathcal J (q,\ell, \omega)
        =(iq^{iv})^\ell J_{\ell}\Big(\frac{2\pi\omega}{\sqrt q}\Big)\prod_{\substack{p\leq X^2,\\p\neq q}}J_0\Big(\frac{2\pi\omega}{\sqrt p}\Big).
        \ee
One can similarly prove that
        \be\label{J minus ell}
        \mathcal J (q,-\ell, \omega)
        =(iq^{-iv})^\ell J_{\ell}\Big(\frac{2\pi\omega}{\sqrt q}\Big)\prod_{\substack{p\leq X^2,\\p\neq q}}J_0\Big(\frac{2\pi\omega}{\sqrt p}\Big).
        \ee      
We estimate $\mathcal J$ in two different ways, depending on the size of $\omega.$ 
First, we note that if $|z| \leq 1$ and $\ell$ is a nonnegative integer, then
    \be\label{Bessel estimate}
    J_\ell(2z)=\frac{z^\ell}{\ell!}e^{\textstyle{-\frac{z^2}{\ell+1}}}\, \Big(1+O(z^4)\Big).
    \ee
This can be proven by using the series expansion (see \cite[p. 15]{Watson})
    \be\label{eq:series for Bessel}
    J_\ell(z)=\sum_{j=0}^\infty \frac{(-1)^j}{j!(\ell+j)!}\Big(\frac{z}{2}\Big)^{2j+\ell}.
    \ee 
From the above series, we see that \eqref{Bessel estimate} holds for $z=0.$ For $z\neq 0,$ together with the series expansion of $e^{z^2/(\ell+1)},$ we have
   \begin{align*}
   J_\ell(2z)\frac{\ell!}{z^\ell}e^{z^2/(\ell+1)}
   &= \bigg(\sum_{j=0}^\infty \frac{(-1)^j \ell !}{j!(\ell+j)!}z^{2j}\bigg) \bigg( \sum_{k=0}^\infty 
   \frac{1}{k!(\ell+1)^k}z^{2k} \bigg) \\
   &= \sum_{j=0}^\infty \sum_{k=0}^\infty 
   \frac{(-1)^j \ell !}{j!(\ell+j)! k!(\ell+1)^k} z^{2j+2k}.
   \end{align*}
   In the double series, we see that the term for $(j,k)=(0, 0)$ is $1,$ 
   and that the term for $(j,k)=(0, 1)$ cancels the term for $(j,k)=(1, 0).$ 
   Thus for $|z| \leq 1,$ the double series is 
   \begin{align*}
    =\sum_{j=1}^\infty \sum_{k=1}^\infty 
    \frac{(-1)^j \ell !}{j!(\ell+j)! k!(\ell+1)^k} z^{2j+2k} 
    &\leq z^4 \sum_{j=1}^\infty  \frac{\ell !}{j!(\ell+j)!} \sum_{k=1}^\infty 
    \frac{1}{k!(\ell+1)^k} \\
    &\ll z^4.
    \end{align*}    
This proves \eqref{Bessel estimate}.    

Now, note that if $\omega \in[0,\sqrt 2/\pi],$ then $\frac{\pi\omega}{\sqrt{p}}\leq 1$ for all primes $p,$ and we can apply \eqref{Bessel estimate} to estimate the Bessel functions in \eqref{J}. This gives
        \begin{align*}
        \mathcal J
        &=(iq^{iv})^\ell \frac{(\pi\omega)^\ell}{\ell!q^{\ell/2}}e^{-\tfrac{\pi^2 \omega^2}{q(\ell+1)}}\Big(1+O\Big(\frac{\omega^4}{q^2}\Big)\Big)
        \prod_{\substack{p\leq X^2,\\p\neq q}} e^{-\tfrac{\pi^2 \omega^2}{p}}\Big(1+O\Big(\frac{\omega^4}{p^2}\Big)\Big)\\
        &= \frac{(iq^{iv}\pi\omega)^\ell}{\ell!q^{\ell/2}}e^{-\pi^2\omega^2\left(\Psi-\tfrac{\ell}{(\ell+1)q}\right)}\Big(1+O\bigs(\omega^{4}\,\bigs)\Big).
        \end{align*}
Introducing the notation $\Psi_q=\Psi-\sdfrac{\ell}{(\ell+1)q},$
we may write 
     \be\label{product Bessels 1}
        \mathcal J
        =\frac{(iq^{iv}\pi\omega)^\ell}{\ell!q^{\ell/2}}e^{-\pi^2\Psi_q\omega^2}\Big(1+O\bigs(\omega^{4}\,\bigs)\Big) 
        \quad \text{for} \quad \omega \in[0,\sqrt{2}/\pi].
        \ee
 Now suppose that $\omega\in [\sqrt 2/\pi, \Omega].$
Then from the proof of Lemma 4.2 in \cite{Tsang}, it is not hard to see that
        \[
        \prod_{\substack{p\leq X^2, \\p\neq q}} 
        J_0\Big(\frac{2\pi\omega}{\sqrt p}\Big)
        \ll e^{-c\Psi\omega^2}
        \]
for some positive constant $c.$ 
For the Bessel function corresponding to the prime $q$ in \eqref{J}, we use the series expansion in \eqref{eq:series for Bessel} to obtain
        \begin{align*}
        J_\ell \Big(\frac{2\pi\omega}{\sqrt q}\Big)
        &=\sum_{j=0}^\infty 
        \frac{(-1)^j}{j!(j+\ell)!}\Big(\frac{\pi\omega}{\sqrt q}\Big)^{2j+\ell}  \\
        &\leq \Big(\frac{\pi\omega}{\sqrt q}\Big)^{\ell}\sum_{j=0}^\infty \frac{(\pi^2\omega^2)^j}{2^j j!(j+\ell)!} 
        \leq \frac{(\pi\omega)^\ell}{\ell !q^{\ell/2}}\sum_{j=0}^\infty 
        \frac{(\sqrt 2\pi\omega)^{2j}}{(2j)!}
        \leq \frac{(\pi\omega)^\ell}{\ell !q^{\ell/2}} e^{\sqrt 2\pi\omega}.
        \end{align*}
In the next-to-last inequality we have used the bound 
        \be\label{binom ineq}
        2^j j!(j+\ell)! \geq \frac{(2j)!\ell !}{2^j},
        \ee
which holds for all integers $j, \ell \geq 0.$ 
To see this, first note that from $\binom{j+\ell}{j} \geq 1,$
we have $(j+\ell)!\geq  j! \ell!.$
Secondly, $\binom{2j}{j} \leq 2^{2j},$ so $2^j j! \geq \sdfrac{(2j)!}{2^{j}j!}.$
Combining these inequalities, we obtain \eqref{binom ineq}. Then it follows that for $\omega\in [\sqrt{2}/\pi, \Omega]$
        \[
        \mathcal J
        \ll \frac{(\pi\omega)^\ell}{\ell !q^{\ell/2}}  e^{-c\Psi\omega^2+\sqrt{2}\pi\omega}.
        \]     
Combining this with \eqref{product Bessels 1}, we obtain
       \be\label{product Bessels 4}
        \mathcal J(q, \ell, \omega) 
       \ll \frac{(\pi\omega)^\ell}{\ell !q^{\ell/2}}  e^{-c\Psi\omega^2+\sqrt{2}\pi\omega}
        \, \quad \text{ for }  \omega \in[0, \Omega].
        \ee
By \eqref{J minus ell}, the same bound holds for $ \mathcal J(q, -\ell, \omega).$      
We use this bound to estimate $\mathcal S_2$ in \eqref{sin term} as
        \begin{align*}
        \mathcal S_2
        \ll &\,  \frac{T}{\Omega^2}\sum_{1\leq\ell \leq K} \frac{\pi^\ell}{\ell!}
        \sum_{q\leq X^2}  \frac{\log q}{q^{\ell}}\int_0^\Omega
        (\Omega-\omega)\omega^\ell e^{-c\Psi\omega^2+\sqrt 2\pi\omega}\mathop{d\omega} \\
        \ll &\, \frac{T}{\Omega} \sum_{q\leq X^2}  \frac{\log q}{q}
       \int_0^\Omega   \bigg( \sum_{1\leq\ell \leq K} \frac{(\pi\omega)^\ell}{\ell!}\bigg)
       e^{-c\Psi\omega^2+\sqrt 2\pi\omega}\mathop{d\omega}
        \\
        \ll &\, \frac{T}{\Omega}\sum_{q\leq X^2}  \frac{\log q}{q}
        \int_0^\Omega e^{-c\Psi\omega^2+(\sqrt 2+1)\pi\omega}\mathop{d\omega}
        \ll \frac{T\log X}{\Omega \sqrt \Psi}\ll  \frac{N(T)}{\Omega \sqrt \Psi}.
        \end{align*} 
  Combining our estimates for  $\mathcal S_1$ and $\mathcal S_2$    
in \eqref{sin term}, we obtain
       \[
        \mathcal S
        \ll \frac{N(T)}{\Omega \sqrt \Psi} + \frac{N(T) \Omega}{2^K}.
        \]
 By \eqref{Omega K}, this is $O\left(\sdfrac{N(T)}{\Psi^2}\right),$ so this proves \eqref{eq:sum sin}.   

    We proceed to prove \eqref{eq:sum F}.    
    Substituting $x=\Re{\CMcal{P}_X(\g+v)}-a$ in \eqref{eq:F} and summing both sides of the equation over $\g,$ we obtain
        \[
        \mathcal F
        =\Im \int_0^ \Omega G\Big(\frac{\omega}{\Omega}\Big)e^{-2\pi i a\omega}\sum_{0 < \g \leq T}\exp\bigs(2\pi i\omega\Re\CMcal{P}_X(\g+v)\bigs)\frac{\mathop{d\omega}}{\omega}. 
        \]
By Lemma \ref{fourier}, this becomes
        \begin{align*}
        \mathcal F
        &=N(T)\Im \int_0^\Omega G\Big(\frac{\omega}{\Omega}\Big)e^{-2\pi i a\omega}\int_0^1\exp\bigs(2\pi i\omega\Re \CMcal{P}_v(\underline{\theta})\bigs)\mathop{d\underline{\theta}}
        \frac{\mathop{d\omega}}{\omega}  \\
        &-\sdfrac{T}{\pi}\sum_{\substack{q\leq X^2,\\ 1\leq\ell \leq K}}\sdfrac{\log q}{q^{\ell/2}} \Im \int_0^{\Omega}G\Big(\sdfrac{\omega}{\Omega}\Big)e^{-2\pi ia\omega}
        \int_0^1\exp\bigs(2\pi i\omega\Re{\CMcal{P}_v(\underline{\theta})}\bigs) \Re e^{2\pi i\ell\theta_q}\mathop{d\underline{\theta}}\frac{\mathop{d\omega}}{\omega} \\
         &+O\bigg(\sdfrac{N(T)}{2^K}\int_0^\Omega \Big| G\Big(\frac{\omega}{\Omega}\Big)e^{-2\pi ia\omega}\Big| \mathop{d\omega} \bigg) 
        =\, \mathcal F_1+\mathcal F_2+\mathcal F_3. 
        \end{align*}       
\noindent We estimate these three terms in turn. 
By \eqref{exp int 1},
        \[
        \mathcal F_1
        = N(T)\Im\int_0^{\Omega}G\Big(\frac{\omega}{\Omega}\Big)e^{-2\pi ia\omega}\prod_{p\leq X^2}J_0\Big(\frac{2\pi\omega}{\sqrt p}\Big)\frac{\mathop{d\omega}}{\omega}.
        \]
The imaginary part of the integral here has been evaluated by Tsang (see~\cite[pp. 34--35]{Tsang}). Using his result, we obtain
        \[
        \mathcal F_1
        =\frac{N(T)}{\sqrt{2\pi}} \int_{-\infty}^\infty 
        \sgn\bigg(x-\frac{a}{\sqrt{\Psi/2}}\bigg)e^{-x^2/2}\mathop{dx}
        +\, O\bigg(\frac{N(T)}{\Psi^2}\bigg).
        \]
To treat $\mathcal F_2,$ recall the notation 
     \[
     \mathcal J(q, \ell, \omega)= \int_0^1\exp\bigs(2\pi i\bigs(\omega\Re{\CMcal{P}_v(\underline{\theta})+\ell\theta_q}\bigs)\bigs) \mathop{d\underline{\theta}}.
     \]  
Then we may write 
        \begin{align*}
        \mathcal F_2
         =&-\frac{T}{2\pi}\sum_{\substack{q\leq X^2,\\ 1\leq\ell \leq K}}\frac{\log q}{q^{\ell/2}}\Im\int_0^\Omega G\Big(\frac{\omega}{\Omega}\Big)e^{-2\pi ia\omega}
          \mathcal J(q, \ell, \omega) \frac{\mathop{d\omega}}{\omega} \\
         &-\frac{T}{2\pi}\sum_{\substack{q\leq X^2,\\ 1\leq\ell \leq K}}\frac{\log q}{q^{\ell/2}}\Im\int_0^\Omega G\Big(\frac{\omega}{\Omega}\Big)e^{-2\pi ia\omega}
          \mathcal J(q, -\ell, \omega)\frac{\mathop{d\omega}}{\omega}  
         =  \mathcal F'_2+\mathcal F^{''}_2,
         \end{align*} 
say. Now by \eqref{product Bessels 4}, 
        \[
        \begin{split}
         \mathcal F'_2 
        =&- \frac{T}{2\pi} 
        \sum_{\substack{q\leq X^2,\\ 1\leq\ell\leq K}}\frac{\log q}{q^{\ell/2}}
       \;  \Im\int_0^\Omega G\Big(\frac{\omega}{\Omega}\Big)e^{-2\pi ia\omega}\,
       \mathcal J(q, \ell, \omega) \frac{\mathop{d\omega}}{\omega} \\
       \ll& \, T \sum_{\substack{q\leq X^2,\\ 1\leq\ell\leq K}}
        \frac{\pi ^\ell \log q}{\ell! q^{\ell}}
         \int_0^\Omega G\Big(\frac{\omega}{\Omega}\Big)
        \omega^{\ell-1} e^{-c\Psi\omega^2+\sqrt 2\pi\omega} \mathop{d\omega}.
        \end{split}
        \]      
Since $G$ is bounded on $[0, 1],$ we see that
        \begin{align*}
        \mathcal F'_{2}      
        & \ll \, T   \sum_{q\leq X^2}\frac{\log q}{q} 
        \sum_{1\leq\ell \leq K}  \frac{\pi^\ell}{\ell!}
         \int_0^\Omega \omega^{\ell-1}
         e^{-c\Psi\omega^2+\sqrt 2\pi\omega}\mathop{d\omega} \\
      &\ll    T   \sum_{q\leq X^2}\frac{\log q}{q}  
        \int_0^\Omega \sum_{1 \leq \ell \leq K} \frac{(\pi \omega)^{\ell-1} }{(\ell-1)!}
      e^{-c\Psi\omega^2+\sqrt 2\pi\omega}\mathop{d\omega}  \\
      & \ll   T   \sum_{q\leq X^2}\frac{\log q}{q}  
      \int_0^\Omega 
      e^{-c\Psi\omega^2+(\sqrt 2\pi+\pi)\omega}\mathop{d\omega}.   
      \end{align*}  
The integrand is bounded and the sum over $q$ is $\ll \log X,$ so by \eqref{Omega K},
        \[
         \mathcal F'_{2} 
         \ll T\, \Omega \log X
         \ll \frac{N(T)}{\Psi(T)^4} .
        \]     
Similarly, $\mathcal F^{''}_2 \ll  \frac{N(T)}{\Psi(T)^4}.$ Finally, since $G$ is a bounded function on $[0, 1],$ 
         \[
         \mathcal F_3  
          \ll  \frac{N(T) \Omega}{2^K} 
          \ll \frac{N(T)}{\Psi^2}
         \] 
by \eqref{Omega K}. Recall that we set $\mathcal F=\sum_{1\leq i \leq 3} \mathcal F_i.$ Combining our estimates for $\mathcal F_1,  \mathcal F_2$ and $\mathcal F_3,$ we obtain \eqref{eq:sum F}.
         
    \end{proof}


\subsection{Completing the Proof of Theorem \ref{distr of Re log zeta}}
\label{proof of thm Re log zeta}

Let $\displaystyle X=T^{\tfrac{1}{16\Psi(T)^6}}$ where $\Psi(T)=\sum_{p\leq T} p^{-1},$ and $0<u \ll \frac{1}{\log T}$ and $|v| = O\bigs(\tfrac{1}{\log X}\bigs).$ 
Define    
    \[
    r(X,\g+v)=\log{|\zeta(\r+z)|}-M_X(\r, z)-\Re\CMcal{P}_X(\g+v),
    \]
where $M_X(\r, z)$ is as defined in \eqref{mean} and $\CMcal{P}_X(\g+v)$ is as given in \eqref{P defn}. We also set $\displaystyle Y= T^{\tfrac{1}{8k}}$ where we choose $k=\lfloor \log{\Psi(T)}\rfloor .$ We similarly define
    \[
    r(Y,\g+v)=\log{|\zeta(\r+z)|}-M_Y(\r, z)-\Re\CMcal{P}_Y(\g+v),
    \]
    where
    \[
    M_Y(\r, z)=m(\r+iv)\Big(\log\Big(\sdfrac{eu\log Y}{4}\Big)-\sdfrac{u\log Y}{4}\Big)
    \]
    and
    \[
    \CMcal{P}_Y(\g+v)=\sum_{p\leq Y^2} \frac{1}{p^{1/2+i(\g+v)}}.
    \]
 Observe that
        \[
        \mathbbm{1}_{[a,b]}\bigs(\Re\CMcal{P}_X(\g+v)\bigs)
        =\mathbbm{1}_{[a,b]}\bigs(\log{|\zeta(\r+z)|}-M_X(\r, z)\bigs)
        \]
    unless 
        \[
        \bigs| r(X,\g+v)\bigs|  > \bigs|\Re\CMcal{P}_X(\g+v)-a\bigs|
        \quad \text{or}\quad 
         \bigs| r(X,\g+v)\bigs|  > \bigs|\Re\CMcal{P}_X(\g+v)-b\bigs|.
        \]
    To estimate the number of such exceptional $\g,$ we define the set
        \[
        A_a=\Big\{0< \g \leq T : \bigs|r(X,\g+v)\bigs| \mkern9mu > \mkern9mu \bigs|\Re\CMcal{P}_X(\g+v)-a\bigs|\Big\},
        \]
     and a similar set $A_b.$  
    Let $\g\in A_a.$ Trivially, for any positive constant $C$ we either have $\bigs| r(X,\g+v)\bigs| \leq 2Ck^2$ or $\bigs|r(X,\g+v)\bigs| > 2Ck^2.$
     By the definition of $A_a,$ the first case implies that $\bigs|\Re\CMcal{P}_X(\g+v)-a\bigs| < 2Ck^2.$ 
    Then by Proposition \ref{distr of chi Re P v} and our choice of $k,$
        \[
        \#\Big\{\g\in A_a : a-2Ck^2 < \Re\CMcal{P}_X(\g+v) < a+2Ck^2\Big\} \ll N(T)\sdfrac{\log^2 {\Psi(T)}}{\sqrt{\Psi(T)}}.
        \]
    On the other hand, by Chebyshev's inequality, 
        \be\label{Chebyshev}
        \#\Big\{\g\in A_a:\bigs| r(X,\g+v)\bigs| > 2Ck^2\Big\}
         \leq \frac{1}{(2Ck^2)^{2k}}\sum_{0<\g\leq T} \bigs| r(X,\g+v)\bigs|^{2k}. 
        \ee
     Hence, we need to estimate moments of $r(X,\g+v).$ Since $X < Y,$ 
     \begin{align*}
     & r(X,\g+v)\\
     = &\, 
      r(Y,\g+v)+\bigs(M_Y(\r,z)-M_X(\r,z)\bigs)+\bigs(\Re\CMcal{P}_Y(\g+v)-\Re\CMcal{P}_X(\g+v)\bigs)\\
     =&\, r(Y,\g+v)
     +m(\r+iv)\Big(\log\Big(\sdfrac{\log Y}{\log X}\Big)-u\sdfrac{\log(Y/X)}{4}\Big) 
     +\Re\sum_{X^2 < p \leq Y^2}\frac{1}{p^{1/2+i(\g+v)}}\\
     =&\, A_1+A_2+A_3 ,
     \end{align*}
     say.
    Then by \eqref{convexity}, we have
    \be\label{moment of r}
     \sum_{0<\g\leq T}\bigs|r(X,\g+v)\bigs|^{2k}
    \ll c^k \bigg( \sum_{0<\g\leq T}\bigs|A_1\bigs|^{2k} +
     \sum_{0<\g\leq T}\bigs|A_2\bigs|^{2k}
     + \sum_{0<\g\leq T}\bigs|A_3\bigs|^{2k}\bigg). 
    \ee
     Here, Proposition \ref{Re log zeta error} gives
     \be\label{moment of A1 prop}
     \sum_{0<\g\leq T} \bigs|A_1\bigs|^{2k} \ll (ck)^{4k} N(T). 
     \ee
     For the $(2k)$th moment of $A_2,$ note that 
     \be\label{moment of m}
     \begin{split}
     A_2&= \bigs(-\log(8k)+\log(16\Psi(T)^6)\bigs)m(\r+iv) \\
     &\ll m(\r+iv)\bigs(\log k+\log\log\log T \bigs)
     \ll k\, m(\r+iv),
     \end{split}
     \ee
     where we used our choice for $k$ and Mertens' theorem $\Psi(T) = \log\log T+O(1).$ 
     Thus, it suffices to study the $(2k)$th moment of $m(\r+iv).$ 
     For each positive integer $j,$ let $N_j(T)$ denote the number of zeros $\r$ with its ordinate in $(0, T]$ and with multiplicity $j,$ that is, $m(\r)=j.$ By a result of Korolev (see Theorem A in ~\cite{Korolev}), it is known that
        \[
         N_j(T) \ll e^{-cj}N(T),
        \]
where $c$ is an absolute constant. Clearly, $N(T+|v|)\ll N(T)$ since $|v| \ll \sdfrac{1}{\log X}.$ Thus, the number of zeros of the form $\r+iv$ with $\g\in(0, T]$ and $m(\r+iv)=j$ is
     \[
     \ll e^{-cj}N(T). 
     \]
Then by \eqref{moment of m}, 
     \[
      \sum_{0<\g\leq T} \bigs|A_2\bigs|^{2k}
      \ll (ck)^{2k}  \sum_{0<\g\leq T} \bigs| m(\r+iv)\bigs|^{2k}
      \ll  (ck)^{2k}\sum_{j=1}^\infty \frac{j^{2k}}{e^{c j}}\, N(T). 
     \]
In a similar way to our estimation of the series in 
     \eqref{eq:proof eta}, the above series can be seen to be $\ll (ck)^{2k}.$ Hence
     \be\label{moment of m A2}
      \sum_{0<\g\leq T} \bigs|A_2\bigs|^{2k} \ll (ck)^{4k} N(T).
     \ee
Finally, we estimate moments of $A_3.$ We easily have
      \[
      \sum_{0<\g\leq T} \bigs|A_3\bigs|^{2k}
      \ll \sum_{0<\g\leq T}\Big|\sum_{X^2<p\leq Y^2} \frac{1}{p^{1/2+i(\g+v)}}\Big|^{2k}. 
      \] 
By Lemma \ref{Soundmomentlemma3}, the right-hand side is
     \[
     \ll k! N(T)\Big( \sum_{X^2< p\leq Y^2} \frac{1}{p}\Big)^k, 
     \]
Then by Mertens' theorem and our choices for $X$ and $Y,$ this is
    \[
    (ck)^k N(T) \bigs(\log(\log Y/\log X) \bigs) ^k \ll (ck)^k N(T) (\log k+\log\log\log T)^k, 
    \] 
and so
    \[
     \sum_{0<\g\leq T} \bigs|A_3\bigs|^{2k}
     \ll (ck)^{2k} N(T).
    \]
Together with \eqref{moment of A1 prop} and \eqref{moment of m A2}, we substitute this into \eqref{moment of r} and obtain
    \[
    \sum_{0<\g\leq T}\bigs| r(X,\g+v)\bigs|^{2k} \ll (ck)^{4k} N(T). 
    \]
We now go back to \eqref{Chebyshev}, which, together with the above estimate, implies 
    \[
    \#\Big\{\g\in A_a:\bigs| r(X,\g+v)\bigs| > 2Ck^2\Big\}
    \ll \frac{(ck)^{4k}}{(2Ck^2)^{2k}}N(T) 
    \]
for some constant $c>0.$ 
If we choose $C$ such that $C\geq c^2,$ then
        \[
        \#\Big\{\g \in A_a:\bigs|r(X,\g+v)\bigs| > 2Ck^2 \Big\} 
        \leq \frac{N(T)}{4^k}
        \ll N(T)\frac{\log^2{\Psi(T)}}{\sqrt{\Psi(T)}},
        \]
     where we used $k=\lfloor \log{\Psi(T)}\rfloor .$ Thus $ \# A_a=O\Big(N(T)\sdfrac{\log^2{\Psi(T)}}{\sqrt{\Psi(T)}}\Big).$ Similarly, $ \# A_b=O\Big(N(T)\sdfrac{\log^2{\Psi(T)}}{\sqrt{\Psi(T)}}\Big).$ This proves that
        \[
        \sum_{0<\g \leq T}\mathbbm{1}_{[a,b]}\bigs(\Re\CMcal{P}_X(\g+v)\bigs) 
        =\sum_{0<\g\leq T}\mathbbm{1}_{[a,b]}\bigs(\log{|\zeta(\r+z)|}-M_X(\r, z)\bigs)
         +O\bigg(N(T)\frac{\log^2 \Psi(T)}{\sqrt{\Psi(T)}}\bigg). 
         \]
With the result of Proposition \ref{distr of chi Re P v}, the left-hand side is 
        \[
         \frac{N(T)}{\sqrt{2\pi}}\int_{a/\sqrt{\tfrac12\log\log T}}^{b/\sqrt{\tfrac12\log\log T}} 
           e^{-x^2/2}\mathop{dx}
           +O\bigg(N(T)\frac{\log\Psi(T)}{\Psi(T)}\bigg). 
        \]
By replacing $a$ with $a\sqrt{\tfrac12\log\log T}$ and $b$ with $b\sqrt{\tfrac12\log\log T},$ we obtain
         \[
         \sum_{0<\g\leq T}\mathbbm{1}_{[a,b]}\bigg(\frac{\log{|\zeta(\r+z)|}-M_X(\r, z)}{\sqrt{\tfrac12\log\log T}}\bigg) 
         =\frac{N(T)}{\sqrt{2\pi}}\int_a^b e^{-x^2/2}\mathop{dx}
               +O\bigg(N(T)\frac{\log^2 \Psi(T)}{\sqrt{\Psi(T)}}\bigg).
         \]
Noting that $\Psi(T)=\log\log T+O(1)$ by Mertens' theorem, we complete the proof of Theorem \ref{distr of Re log zeta}.   
       
 \begin{rem*}      
    One can see from the above discussion that for $u$ as small as $O\Big( \frac{\log\log\log T}{\log T}\Big),$ we can replace $M_X(\r,z)$ in the statement of Theorem \ref{distr of Re log zeta} by 
       \[
       M(\r,z)=m(\r+iv)\Big(\log\Big(\sdfrac{eu\log T}{4}\Big)-\sdfrac{u\log T}{4}\Big). 
       \]  
\end{rem*}       

\section{Proof of Theorem~\ref{thm: log zeta'}}
\label{proof of thm Re log zeta'}

Our proof of Theorem~\ref{thm: log zeta'} is very similar to that of
Theorem~\ref{distr of Re log zeta}. We are, however, required to assume that all the zeros of the Riemann zeta-function are simple. Another difference from Theorem~\ref{distr of Re log zeta} is that the error resulting from the spacing between consecutive (distinct) zeros will be estimated under the weaker assumption of
Hypothesis $\mathscr{H}_\a$ instead of Montgomery's Pair Correlation Conjecture. 

We start with the formula in Corollary \ref{Re log zeta`}. This gives
       \be\label{eq:difference}
       \log{\frac{|\zeta^{(m(\r))}(\r)|}{\big((e\log{X})/4\big)^{m(\r)}}}-
       \Re\CMcal{P}_X(\g)
        =\sum_{i=1}^{4}  r_i(X, \g),
        \ee
where the terms on the right-hand side are given as    
        \begin{align*}
         r_1(X, \g)=
        \bigg|\sum_{p\leq X^2}\frac{1-w_X(p)}{p^{1/2+i\g}}\bigg| \, , & \quad
       \quad r_2(X, \g)=\bigg|\sum_{p\leq X}\frac{w_X(p^2)}{p^{1+2i\g}}\bigg|\, ,  \\[4pt] 
        r_3(X, \g)= \frac{1}{\log X} \int_{1/2}^\infty X^{\frac 1 2-\s}&\bigg|\sum_{p\leq X^2}\frac{\Lambda_X(p)\log{(Xp)}}{p^{\s+i\g}}\bigg|\mathop{d\s},
        \end{align*}
and  
         \[  
   r_4(X, \g)
   =\bigg(1+\log^{+}\Big(\frac{1}{\eta_{\g}\log X }\Big)\bigg)\frac{E(X,\g)}{\log X}. 
         \]    
In the following proposition, we estimate the moments of the left-hand side of \eqref{eq:difference}.


\begin{prop}\label{Re log zeta' error}
Assume RH and Hypothesis $\mathscr H_\a$ for some $\a\in(0,1].$
Let $\, T^{\tfrac{\d}{8k}} \leq X\leq T^{\tfrac{1}{8k}},$ where $k$ is a positive integer and $0<\d \leq 1$ is fixed.
Then for a constant $A$ depending on $\a$ and $\d,$ we have
      \[
     \sum_{0< \g \leq T} 
       \bigg(\log{\frac{|\zeta^{(m(\r))}(\r)|}{\big((e\log{X})/4\big)^{m(\r)}}}-\Re\CMcal{P}_X(\g)\bigg)^k
       \ll (Ak)^{2k}N(T).
       \] 
\end{prop}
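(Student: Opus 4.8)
The plan is to mimic the structure of the proof of Proposition~\ref{Re log zeta error}, raising \eqref{eq:difference} to the $k$th power, applying the convexity inequality \eqref{convexity}, and then bounding the resulting four sums
\[
\sum_{0<\g\leq T} r_i(X,\g)^k \qquad (i=1,2,3,4)
\]
one at a time. The first three of these are essentially identical to the $v=0$ case of what was already done: for $r_1$ and $r_2$ one invokes the Cauchy--Schwarz inequality followed by Lemma~\ref{Soundmomentlemma3} exactly as in \eqref{1-w} and \eqref{w} to get bounds of the shape $(ck)^k N(T)$; for $r_3$ one cites Proposition~\ref{error term integral} directly (with $v=0$), which gives $\ll (ck)^{k/2} N(T)$. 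So the entire content of the proposition is the estimate for the fourth term
\[
\sum_{0<\g\leq T}\bigg(\Big(1+\log^+\tfrac{1}{\eta_\g\log X}\Big)\frac{E(X,\g)}{\log X}\bigg)^k,
\]
and this is where Hypothesis $\mathscr H_\a$ replaces Montgomery's Pair Correlation Conjecture.

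For that fourth term I would reprise the argument of Proposition~\ref{zero spacing eta v} with $v=0$. Apply the Cauchy--Schwarz inequality as in \eqref{eq:Cauchy Schwarz on zero spacing} to split off a moment of $E(X,\g)$ — which is bounded by $(Dk)^{2k}N(T)(\log X)^{2k}$ exactly as in \eqref{eq:moment of E 2}, using Lemma~\ref{Soundmomentlemma3} on the prime part and crude bounds on the prime-power and $\log\g$ parts — from the sum
\[
\sum_{0<\g\leq T}\Big(1+\log^+\tfrac{1}{\eta_\g\log X}\Big)^{2k}.
\]
To handle this last sum, dyadically decompose the range $\eta_\g\le 1/\log X$: if $e^{-j-1}/\log X<\eta_\g\le e^{-j}/\log X$, then $\tfrac12+i\g$ has a neighbouring ordinate within $e^{-j}/\log X$, so $\g$ lies within $e^{-j}/\log X$ of its successor (or predecessor) ordinate. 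By Hypothesis $\mathscr H_\a$ applied with $C=e^{-j}\log T/\log X\asymp (k/\d)e^{-j}$ (valid once $C<1$, i.e.\ for $j$ past a fixed threshold; the finitely many small $j$ contribute $O(N(T))$ times a power of $k$, harmlessly), the number of such $\g$ is $\ll C^\a N(T)\ll (k/\d)^\a e^{-\a j}N(T)$. Since $1+\log^+\tfrac{1}{\eta_\g\log X}<j+2$ on this block, the sum is $\ll (k/\d)^\a N(T)\sum_{j\ge 0}(j+2)^{2k}e^{-\a j}$, and the series is $\ll \a^{-2k}\Gamma(2k)\ll (A k)^{2k}$ with $A$ depending on $\a$. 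Feeding this and the $E$-moment bound back through Cauchy--Schwarz yields $\ll (Ak)^{2k}N(T)$ for the fourth term.

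The main obstacle, and the only real point of departure from the earlier propositions, is precisely this use of $\mathscr H_\a$ in place of pair correlation: the pair correlation conjecture gave a saving of $e^{-j}$ (i.e.\ effectively $\a=1$) in the $j$th dyadic block, whereas $\mathscr H_\a$ only gives $e^{-\a j}$, and one must check that the geometric factor $e^{-\a j}$ still dominates the polynomial growth $(j+2)^{2k}$ uniformly — it does, since $\sum_{j\ge 0}(j+2)^{2k}e^{-\a j}\ll \a^{-(2k+1)}(2k)!\ll (Ak)^{2k}$, absorbing the loss into the constant $A=A(\a)$. One also needs the standing hypothesis $k$ bounded (via $k\ll\log\log\log T$ at the point of application, or simply $X\ge T^{\d/(8k)}$ here) so that $C<1$ eventually and the exponents line up; the book-keeping is routine. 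Combining the four bounds with \eqref{convexity} gives the stated $\ll (Ak)^{2k}N(T)$.
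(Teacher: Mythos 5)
Your proposal is correct and follows essentially the same route as the paper: reduce via \eqref{eq:difference} and \eqref{convexity} to the four error sums, reuse \eqref{1-w}, \eqref{w} and Proposition \ref{error term integral} for the first three, and for the fourth combine the Cauchy--Schwarz splitting \eqref{eq:Cauchy Schwarz on zero spacing} with the $E$-moment bound \eqref{eq:moment of E 2} (at $v=0$) and a dyadic decomposition of $\eta_\g$ counted via Hypothesis $\mathscr H_\a$, treating the range where $C\geq 1$ separately (the paper bounds that range by $(D\log k)^{2k+1}N(T)$, which is what your ``harmless'' small-$j$ contribution amounts to) and estimating the series $\sum_{j\geq 0}(j+2)^{2k}e^{-\a j}\ll (Ak)^{2k}$ exactly as the paper does. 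No genuine differences or gaps.
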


\begin{proof}
    By \eqref{eq:difference} and then the inequality \eqref{convexity}, 
        \[
       \sum_{0< \g \leq T} 
       \bigg(\log{\frac{|\zeta^{(m(\r))}(\r)|}{\big((e\log{X})/4\big)^{m(\r)}}}-\Re\CMcal{P}_X(\g)\bigg)^k
        = O\bigg(c^k\sum_{i=1}^{4} \sum_{0< \g \leq T}  r_i(X, \g)^k\bigg). 
        \]
    The error terms which correspond to $i=1$ and $i=2$ have already been shown to be $\ll (ck)^k N(T)$ in \eqref{1-w} and \eqref{w}, respectively. 
    The third error term is absorbed in this bound by the result of Proposition \ref{error term integral}. 
    It remains to show that 
      \[
         \sum_{0 < \g \leq T}
         \bigg(\Big(1+\log^+ \sdfrac{1}{\eta_{\g}\log{X}}\Big)\frac{E(X,\g)}{\log X}\bigg)^{k}
         \ll (Ak)^{2k}N(T). 
      \]
We will estimate this conditionally on Hypothesis $\mathscr{H}_\a.$ By \eqref{eq:moment of E 2} with $v=0,$ we have 
        \[
        \sum_{0 < \g \leq T} \big|E(X,\g)\big|^{2k}\ll (Dk)^{2k} N(T)  (\log X)^{2k}
        \]
for a constant $D$ depending on $\d.$ It suffices to show that for a constant $A$ depending on $\a$ and $\d,$
        \be\label{eq:moment of eta}
        \sum_{0 < \g \leq T}\Big( 1+\log^+ \sdfrac{1}{\eta_{\g}\log{X}} \Big)^{2k} \ll (Ak)^{2k} N(T). 
        \ee  
Recall the definition
        \[
        \eta_{\g}=\min_{\g' \neq \g}|\g'-\g|.
        \]
    In the case when $\displaystyle \eta_{\g} >\frac{1}{\log X},$ the term on the left-hand side of \eqref{eq:moment of eta} will be just $1$ and such terms will not contribute more than $N(T)$ to the sum. 
To consider larger terms, we assume $\displaystyle \eta_{\g} \leq \frac{1}{\log X}.$ 
Then for a nonnegative integer $j$ 
        \[
         \frac{e^{-j-1}}{\log X} < \eta_{\g} \leq \frac{e^{-j}}{\log X},
        \]
    and so
        \[
        1+\log^+\sdfrac{1}{\eta_\g\log{X}} < j+2 . 
        \]
    By Hypothesis $\mathscr H_\a$ and the comment immediately following it, the number of such $\g$ is at most
        \[
        \#\Big\{0 <  \g \leq T: \eta_\g \leq \sdfrac{e^{-j}}{\log X} \Big\} \ll 
        \min\Big\{  1, \Big(\sdfrac{e^{-j}\log T}{\log X}\Big)^\a \Big\}N(T).
        \]
    Note that $\displaystyle\sdfrac{e^{-j}\log T}{\log X}\geq 1$ for $j\leq \log(\log T/\log X),$ so for $\displaystyle j\leq \log\Big(\sdfrac{8k}{\d}\Big).$
    The contribution of $\g$ corresponding to such $j$ to the sum in \eqref{eq:moment of eta} is at most
        \[
        N(T)\sum_{j \leq \log(8k/{\d})} (j+2)^{2k} \ll (D\log k)^{2k+1} N(T),
        \]
     where $D$ is a constant depending on $\d.$
    Thus, the sum over $\g$ is
        \begin{align*}
        \sum_{0 < \g \leq T}\Big( 1+\log^+ \sdfrac{1}{\eta_\g\log{X}} \Big)^{2k} 
        &\ll N(T) \Big(\sdfrac{\log T}{\log X}\Big)^{\a} \sum_{j=0}^\infty \frac{(j+2)^{2k}}{e^{j\a}} +(D\log k)^{2k+1} N(T) \\
        &\ll \Big(\frac{k}{\d}\Big)^\a N(T)\sum_{j=0}^\infty \frac{(j+2)^{2k}}{e^{j\a}}+(D\log k)^{2k+1} N(T).
        \end{align*}
    By using a similar argument to the one we applied to the series in \eqref{eq:proof eta}, we see that the series on the last line is $\ll (Ak)^{2k}$ for a constant $A$ depending on $\a.$ Hence
      \[
      \sum_{0 < \g \leq T}\Big( 1+\log^+ \sdfrac{1}{\eta_\g\log{X}} \Big)^{2k} 
      \ll  (Ak)^{2k} N(T),
      \]
    where $A$ denotes a constant depending on both $\a$ and $\d.$ 
\end{proof}

\begin{proof}[Proof of Theorem~\ref{thm: log zeta'}]
Let $\displaystyle X = T^{\tfrac{1}{16\Psi(T)^6}}$ where $\Psi(T)=\sum_{p\leq T} p^{-1}.$
By Proposition \ref{distr of chi Re P v} for $v=0,$ we have
    \begin{align*}
    \frac{1}{N(T)}\#\bigg\{0 < \g \leq T: \frac{\Re\CMcal{P}_X(\g)}{\sqrt{\tfrac12\log\log T}}\in [a, b]\bigg\} 
    = \frac{1}{\sqrt{2\pi}}\int_a^b e^{-x^2/2}\mathop{dx}
    +O\bigg(\frac{\log{\Psi(T)}}{\Psi(T)}\bigg).
    \end{align*}
We assume that $m(\r)=1$ for all zeros $\r.$
We quickly see that one can prove the following result by using
a simpler version of the discussion in Section \ref{proof of thm Re log zeta}.
        \be\label{proof of thm log zeta'}
        \begin{split}
        \frac{1}{N(T)}\#\bigg\{0 < \g \leq T: 
        & \frac{\log\sdfrac{|\zeta'(\r)|}{(e\log X)/4}}{\sqrt{\tfrac12\log\log T}}\in [a, b] \bigg\}  \\
        =&\frac{1}{\sqrt{2\pi}}\int_a^b e^{-x^2/2}\mathop{dx}
        +O\bigg(\frac{(\log\log\log T)^2}{\sqrt{\log\log T}}\bigg).
        \end{split}
        \ee
Now, note that there is an absolute constant $c$ for which
        \[
        \bigg|\log\frac{|\zeta'(\r)|}{(e\log X)/4}
        -\log\frac{|\zeta'(\r)|}{\log T}\bigg|
        \leq c\log\log\log T 
        \quad \text{ for all} \, \,\,  \r.   
        \]  
Then for a suitable constant $c,$ 
        \begin{align*}
        \#\bigg\{0 < \g \leq T:  \frac{\log\sdfrac{|\zeta'(\r)|}{(e\log X)/4}}{\sqrt{\tfrac12\log\log T}} \in   
         \bigg[a+c&\sdfrac{\log\log\log T}{\sqrt{\log\log T}}, b-c\sdfrac{\log\log\log T}{\sqrt{\log\log T}}\bigg] \bigg\}
        \\ 
         \leq
         \, &\#\bigg\{0 < \g \leq T: \frac{\log\sdfrac{|\zeta'(\r)|}{\log T}}{\sqrt{\tfrac12\log\log T}}\in 
        [a,b] \bigg\},
        \end{align*}
and
        \begin{align*}
         \#\bigg\{0 < \g \leq& T: \frac{\log\sdfrac{|\zeta'(\r)|}{\log T}}{\sqrt{\tfrac12\log\log T}}\in [a, b] \bigg\}
        \\ 
        \leq  
        &\, \#\bigg\{0 < \g \leq T: \frac{\log\sdfrac{|\zeta'(\r)|}{(e\log X)/4}}{\sqrt{\tfrac12\log\log T}}\in 
        \bigg[a-c\sdfrac{\log\log\log T}{\sqrt{\log\log T}}, b+c\sdfrac{\log\log\log T}{\sqrt{\log\log T}}\bigg] \bigg\}.
        \end{align*}
By \eqref{proof of thm log zeta'},  the left-hand side of the first inequality and 
the right-hand side of the second inequality are both 
        \[
        \frac{N(T)}{\sqrt{2\pi}}\int_a^b e^{-x^2/2}\mathop{dx}
        +O\bigg(N(T)\frac{(\log\log\log T)^2}{\sqrt{\log\log T}}\bigg)
        +O\bigg(N(T)\frac{\log\log\log T}{\sqrt{\log\log T}}\bigg). 
        \]
Combining the inequalities, we get an estimate which is essentially the statement \eqref{proof of thm log zeta'}, but with $\displaystyle \log\Big(\sdfrac{|\zeta'(\r)|}{(e\log X)/4}\Big)$ replaced by $\displaystyle \log\Big(\sdfrac{|\zeta'(\r)|}{\log T}\Big).$ This is the claim of Theorem~\ref{thm: log zeta'}. 
\end{proof}


\section*{Final Comments}
    Similarly to the proof of Selberg's central limit theorem in \cite{Tsang},
    it would be possible to generalize the results in this paper to hold for $\g$ within an interval $[T, T+H],$ where $H$ satisfies $T^\theta < H \leq T$ with $\theta > 1/2.$
    As another generalization of our work, 
    one can consider the value distribution of the sequence $(\log{L(\r, \chi)})$ for a fixed primitive $L$-function $L(s, \chi).$
    In fact, central limit theorems as in Theorem \ref{distr of Re log zeta} and Theorem \ref{distr of Im log zeta} hold in this case,
and they are conditional on the Generalized Riemann hypothesis, a zero-spacing hypothesis between nontrivial zeros of $\zeta(s)$ and nontrivial zeros of $L(s,\chi),$ and the assumption that
nontrivial zeros of $L(s, \chi)$ never coincide with any of the nontrivial zeros of $\zeta(s).$ This can be proven by suitable modifications of the results in this paper.



 \end{document}